\theoremstyle{plain} 
\newtheorem{thm}{Theorem}[section]
\newtheorem{thmm}{Theorem}
\newtheorem{prop}[thm]{Proposition}
\newtheorem{lem}[thm]{Lemma}
\newtheorem{cor}[thm]{Corollary}
\newtheorem{quest}{Question}
\theoremstyle{definition}
\newtheorem{defn}[thm]{Definition}
\newtheorem{rem}[thm]{Remark}
\newtheorem{ex}[thm]{Example}
\numberwithin{equation}{section}
\renewcommand{\theta}{\vartheta}
\renewcommand{\phi}{\varphi}
\renewcommand{\epsilon}{\varepsilon}
\renewcommand{\subset}{\subseteq}
\renewcommand{\supset}{\supseteq}
\newcommand{\N}{\mathbb N}
\newcommand{\C}{\mathbb C}
\newcounter{PartitionDepth}
\newcounter{PartitionLength}
\newcommand{\partii}[3]{
 \begin{picture}(#3,#1)
 \setcounter{PartitionLength}{#3-#2}
 \setcounter{PartitionDepth}{-1-#1}
 \put(#2,\thePartitionDepth){\line(0,1){#1}}     
 \put(#3,\thePartitionDepth){\line(0,1){#1}}
 \put(#2,\thePartitionDepth){\line(1,0){\thePartitionLength}}
 \end{picture}}
\newcommand{\upparti}[2]{
 \begin{picture}(#2,#1)
 \setcounter{PartitionDepth}{#1}
 \put(#2,0){\line(0,1){#1}}
 \end{picture}}
\newcommand{\uppartii}[3]{
 \begin{picture}(#3,#1)
 \setcounter{PartitionLength}{#3-#2}
 \setcounter{PartitionDepth}{#1}
 \put(#2,0){\line(0,1){#1}}     
 \put(#3,0){\line(0,1){#1}}
 \put(#2,\thePartitionDepth){\line(1,0){\thePartitionLength}}
 \end{picture}}
\newsavebox{\boxpaarpart}
\newsavebox{\boxbaarpart}
\newsavebox{\boxvierpart}
\newsavebox{\boxvierpartrot}
\newsavebox{\boxvierpartrota}
\newsavebox{\boxvierpartrotb}
\newsavebox{\boxcrosspart}
\newsavebox{\boxidpart}
\newsavebox{\boxsingsingpart}
\newsavebox{\boxhalflibpart}
\newsavebox{\boxpaarpaarpartrot}
\newsavebox{\boxcircle}
\newsavebox{\boxsquare}
\newsavebox{\boxpoints}
\newsavebox{\boxFourOneIdTwo} 
\newsavebox{\boxIdOneFourTwo} 
\newsavebox{\boxFourOneFourTwo} 
\newsavebox{\boxSingSingOneIdTwo} 
\newsavebox{\boxIdOneSingSingTwo} 
\newsavebox{\boxSingSingOneSingSingTwo} 
\newsavebox{\boxSingOneSingTwo} 
\newsavebox{\boxCrossOneIdTwo} 
\newsavebox{\boxIdOneCrossTwo} 
\newsavebox{\boxCrossOneCrossTwo} 
\newsavebox{\boxvierschraegpart}
\newsavebox{\boxpaarpaarschraegpart}
\newsavebox{\boxzweischraegpart}
\newsavebox{\boxFourOneTwo} 
\newsavebox{\boxPaarOneTwo} 
\newsavebox{\boxProzent} 
\newsavebox{\boxPaarPaarOneIdTwo} 
\newsavebox{\boxIdOnePaarPaarTwo} 
\newsavebox{\boxCrossOneTwo} 
\newsavebox{\boxPaarPaarOneTwo} 
\newsavebox{\boxHalfThree} 
\newsavebox{\boxGrauKreuz} 
\newsavebox{\boxGrauKreuzg} 
\newsavebox{\boxGrauKreuzs} 
\newsavebox{\boxGrLL} 
\newsavebox{\boxGrLLgg} 
\newsavebox{\boxGrLLgs} 
\newsavebox{\boxGrLLsg} 
\newsavebox{\boxGrLLL} 
\newsavebox{\boxGrLLLL} 
\newsavebox{\boxidpartww}
\newsavebox{\boxidpartbb}
\newsavebox{\boxpaarpartwb}
\newsavebox{\boxpaarpartbw}
\newcommand{\paarpart}{\usebox{\boxpaarpart}}
\newcommand{\baarpart}{\usebox{\boxbaarpart}}
\newcommand{\halflibpart}{\usebox{\boxhalflibpart}}
\newcommand{\vierpartrot}{\usebox{\boxvierpartrot}}
\newcommand{\crosspart}{\usebox{\boxcrosspart}}
\newcommand{\idpart}{|}
\newcommand{\singleton}{\uparrow}
\newcommand{\downsingleton}{\downarrow}
\newcommand{\FourOneIdTwopart}{\usebox{\boxFourOneIdTwo}}
\newcommand{\IdOneFourTwopart}{\usebox{\boxIdOneFourTwo}}
\newcommand{\SingSingOneIdTwopart}{\usebox{\boxSingSingOneIdTwo}}
\newcommand{\IdOneSingSingTwopart}{\usebox{\boxIdOneSingSingTwo}}
\newcommand{\SingOneSingTwopart}{\usebox{\boxSingOneSingTwo}}
\newcommand{\CrossOneIdTwopart}{\usebox{\boxCrossOneIdTwo}}
\newcommand{\IdOneCrossTwopart}{\usebox{\boxIdOneCrossTwo}}
\newcommand{\CrossOneCrossTwopart}{\usebox{\boxCrossOneCrossTwo}}
\newcommand{\FourOneTwopart}{\usebox{\boxFourOneTwo}}
\newcommand{\PaarOneTwopart}{\usebox{\boxPaarOneTwo}}
\newcommand{\PaarPaarOneIdTwopart}{\usebox{\boxPaarPaarOneIdTwo}}
\newcommand{\IdOnePaarPaarTwopart}{\usebox{\boxIdOnePaarPaarTwo}}
\newcommand{\CrossOneTwopart}{\usebox{\boxCrossOneTwo}}
\newcommand{\PaarPaarOneTwopart}{\usebox{\boxPaarPaarOneTwo}}
\newcommand{\HalfThreepart}{\usebox{\boxHalfThree}}
\newcommand{\paarpartwb}{\usebox{\boxpaarpartwb}}
\newcommand{\paarpartbw}{\usebox{\boxpaarpartbw}}
\newcommand{\idpartww}{\usebox{\boxidpartww}}
\newcommand{\idpartbb}{\usebox{\boxidpartbb}}
\newcommand{\CC}{\mathcal C}
\newcommand{\Pm}{P^{(m)}}
\DeclareMathOperator{\id}{id}
\DeclareMathOperator{\Hom}{Hom}
\DeclareMathOperator{\lspan}{span}
\DeclareMathOperator{\symm}{symm}
\DeclareMathOperator{\resplevels}{resplevels}
\DeclareMathOperator{\even}{even}
\DeclareMathOperator{\nodiagonal}{nodiagonal}
\DeclareMathOperator{\noviceversa}{noviceversa}
\DeclareMathOperator{\nc}{nc}
\begin{document}
\title{Quantum groups based on spatial partitions}
\author{Guillaume C\'ebron}
\address{Universit\'e Paul Sabatier, Institut de Math\'ematiques de Toulouse,
118 Route de Narbonne, 31062 Toulouse, France}
\author{Moritz Weber}
\address{Saarland University, Fachbereich Mathematik, Postfach 151150,
66041 Saarbr\"ucken, Germany}
\email{guillaume.cebron@math.univ-toulouse.fr, weber@math.uni-sb.de}
\date{\today}
\subjclass[2010]{20G42 (Primary); 05A18, 05E10, 46L54 (Secondary)}
\keywords{set partitions, three-dimensional partitions, spatial partitions, compact matrix quantum groups, easy quantum groups, partition quantum groups, Banica-Speicher quantum groups, free orthogonal quantum groups, tensor categories, Kronecker product}

\begin{abstract}
We define new compact matrix quantum groups whose intertwiner spaces are dual to tensor categories of three-dimensional set partitions -- which we call \emph{spatial partitions}. This extends substantially Banica and Speicher's approach of the so called easy quantum groups: It enables us to find new examples of quantum subgroups of Wang's free orthogonal quantum group $O_n^+$ which do \emph{not} contain the symmetric group $S_n$; we may define new kinds of products of quantum groups coming from new products of categories of partitions; and we give a quantum group interpretation of certain categories of partitions which do neither contain the pair partition nor the identity partition.
\end{abstract}

\maketitle

\section*{Introduction}

Compact matrix quantum groups have been defined by Woronowicz in the 1980's \cite{WoCMQG}. In the 1990's, Wang \cite{WangOrth} gave a definition of a free quantum version $O_n^+$ of the orthogonal group $O_n\subset M_n(\C)$. The idea is basically to replace the scalar entries $u_{ij}$ of an orthogonal matrix by noncommuting variables. One can think of the $u_{ij}$ as operators on a Hilbert space, for instance. The quantum group $O_n^+$ contains the group $O_n$, hence there are somehow more orthogonal rotations in the quantum world than in the classical world. 

In order to understand quantum subgroups of $O_n^+$, Banica and Speicher \cite{BS} developped the theory of easy quantum groups, which we also call Banica-Speicher quantum groups throughout the article. They are based on set partitions which are decompositions of finite ordered sets into disjoint subsets. In a Tannaka-Krein (or Schur-Weyl) sense, the intertwiner spaces of Banica-Speicher quantum groups are dual to  categories of partitions \cite{WoTK, BS, TW}. More precisely, to each partition $p$  we associate a linear map $T_p$. A category of partitions is a set of partitions which is closed under taking tensor products, composition and involution of partitions. These operations on partitions $p$ correspond exactly to canonical operations on the linear maps $T_p$ turning the linear span of these $T_p$ into a tensor category. A quantum subgroup $G\subset O_n^+$ of $O_n^+$ is called easy or a Banica-Speicher quantum group \cite{BS}, if its intertwiners are given by such a linear span of maps $T_p$ indexed by partitions $p$ coming from a category of partitions. Hence, Banica-Speicher quantum groups (operator algebraic objects) are in one-to-one correspondence to categories of partitions (combinatorial objects).

The motivation for our article came from the following three questions. 

Firstly, any category of partitions is required to contain two particular partitions as a base case: the pair partition $\paarpart$ and the identity partition $\idpart$ (in order to obtain a quantum subgroup of $O_n^+$). 

\begin{quest}\label{Q1}
Can we replace these base partitions by other base partitions and still associate quantum groups to such categories? 
\end{quest}

From a combinatorial point of view, there is no problem in studying categories of partitions with different base cases, but until now, there was no interpretation of such objects on the quantum group side. 

Secondly, given two categories of partitions $\CC_1$ and $\CC_2$. 

\begin{quest}\label{Q2}
Can we form a new category of partitions out of two given ones by some product construction which resembles product constructions on the level of quantum groups?
\end{quest}

Thirdly, the approach to construct quantum subgroups $G$ of $O_n^+$ via Banica-Speicher quantum groups comes with the restriction that $G$ contains the symmetric group $S_n$.

\begin{quest}\label{Q3}
How can we extend the machinery of Banica-Speicher quantum groups in order to cover quantum groups $S_n\not\subset G\subset O_n^+$?
\end{quest}

Surprisingly, we can give answers to all three questions at the same time, with our new machinery. On the way, we define new products of general quantum subgroups of $O_n^+$ and we find many new examples of quantum subgroups of $O_n^+$.

Banica-Speicher quantum groups have links to Voiculescu's free probability theory \cite{NS, VSW}, for instance via de Finetti theorems \cite{SpK, BCS}. See also \cite{TW, RW, RWd, FW, Web, LT, F, Br} as an incomplete list for recent work on Banica-Speicher quantum groups or on $O_n^+$. Question \ref{Q3} has also been tackled in the very recent preprint by Speicher and the second author \cite{SW}.

\pagebreak

\section{Main ideas and main results}

The key point of Banica and Speicher's approach is to consider a partition  $p\in P(k,l)$ of a set with  $k+l$ elements ($k$ ``upper'' ones and $l$ ``lower'' ones) and to associate a linear map $T_p:(\C^n)^{\otimes k}\to(\C^n)^{\otimes l}$ to it, for a fixed $n\in\N$. If the number $n$ can be written as a product $n=n_1\cdots n_m$ for $n_i\in\N$, we obtain
\begin{tabular}{p{8.7cm}p{2.2cm}p{3cm}}
&\\
$T_p:(\C^{n_1\cdots n_m})^{\otimes k}\to(\C^{n_1\cdots n_m})^{\otimes l}$ &governed by &$ p\in P(k,l)$.\\
&
\end{tabular}

Our main tool is derived from the following simple observation. If we consider partitions in $P(km,lm)$ and apply the assignment $p\mapsto T_p$, we obtain a map
\begin{tabular}{p{8.7cm}p{2.2cm}p{3cm}}
&\\
$T_p:(\C^{n_1}\otimes\cdots \otimes\C^{n_m})^{\otimes k}\to(\C^{n_1}\otimes\cdots \otimes\C^{n_m})^{\otimes l}$ &governed by &$p\in P(km,lm)$.\\
&
\end{tabular}

Under the isomorphism $\C^n=\C^{n_1\cdots n_m}\cong \C^{n_1}\otimes\ldots\otimes\C^{n_m},$ this enables us to find many more maps from $(\C^n)^{\otimes k}$ to $(\C^n)^{\otimes l}$ compared to Banica and Speicher's approach, since we may use partitions on more points (Section \ref{SectLinMMaps}).

On a technical level, it is convenient to view partitions in $P(km,lm)$ as three-dimensional partitions (on $k\times m$ ``upper'' points and $l\times m$ ``lower'' points) and to speak about the set $\Pm(k,l)$ of spatial partitions (see Section \ref{SectAdapted}). Then, spatial partition quantum groups  are defined as quantum subgroups of $O_{n_1\cdots n_m}^+$ whose intertwiner spaces are given by maps indexed by spatial partitions (Section \ref{SectMeasy}). The sets of spatial partitions which corresponds to categories of intertwiner spaces will be called categories of  spatial partitions: they are sets of spatial partitions which are closed under tensor product, composition and involution, and which contains the base partitions 
\setlength{\unitlength}{0.5cm}
\begin{center}
\begin{picture}(25,7)
\put(0,2){$\idpart^{(m)}:=$}
\put(3,0.5){\usebox{\boxGrLLLL}}
\put(3,3.5){\usebox{\boxGrLLLL}}
\put(3,0.7){\line(0,1){3}}
\put(3.6,1.4){\line(0,1){3}}
\put(4.2,2.1){\line(0,1){3}}
\put(4.8,2.8){\line(0,1){3}}
\put(6,2){$\in P^{(m)}(1,1)$}
\put(11,2){and}
\put(13,2){$\paarpart^{(m)}:=$}
\put(17,1.5){\usebox{\boxGrLLLL}}
\put(18,1.5){\usebox{\boxGrLLLL}}
\newsavebox{\BoxPa}
   \savebox{\BoxPa}
   { \begin{picture}(1,1)
   \put(0,0){\line(0,1){0.4}}
   \put(1,0){\line(0,1){0.4}}
   \put(0,0.4){\line(1,0){1}}
   \end{picture}}
\put(16.7,1.7){\usebox{\BoxPa}}
\put(17.3,2.4){\usebox{\BoxPa}}
\put(17.9,3.1){\usebox{\BoxPa}}
\put(18.5,3.8){\usebox{\BoxPa}}
\put(21,2){$\in P^{(m)}(0,2)$}
\end{picture}
\end{center}
(see Section \ref{SectMCateg}). Note that we do not require the containment of $\idpart\in P(1,1)$ and $\paarpart\in P(0,2)$, on the contrary to the categories of partitions of Banica and Speicher. This answers Question \ref{Q1} (see also Remark \ref{RemBaseCase}). If the intertwiner spaces of a quantum group $G$ is given by the linear spans of maps $T_p$ indexed by partitions $p$ coming from a category $\CC$ of spatial partitions, we usually say that $\CC$ corresponds to the quantum group $G$.

In order to prepare an answer to Question \ref{Q2} observe that given  two categories $\CC_i\subset P$ for $i=1,2$, we may form the category $\CC_1\times \CC_2\subset P^{(2)}$ by placing partitions from $\CC_1$ on the first level and partitions from $\CC_2$ on the second one in our three-dimensional picture. On the other hand, given two compact matrix quantum groups $(G,u)$ and $(H,v)$ such that the matrices $u$ and $v$ have the same size, we can form the glued direct product $G\tilde\times H$ of \cite[Def. 6.4]{TW}, and more generally, the glued direct product $G\tilde\times_p H$ with amalgamation over a partition $p\in P^{(2)}$ (see Definition~\ref{DefAmalgam}), given by 
\begin{align*}
C^*(u_{ij}v_{kl})\subset C(G)\otimes_{\textnormal{max}} C(H) / \langle u_{ij}v_{kl} \textnormal{ satisfy intertwiner relations associated to } p\rangle.
\end{align*}
We then have the following answer to Question \ref{Q2}.

\begin{thmm}[Thm. \ref{ThmProduct}, Thm. \ref{ThmAmalProd}] 
Let $(G_i,u_i)\subset O_{n}^+$ be Banica-Speicher quantum groups with categories $\CC_i\subset P$ for $i=1,2$. Then,
\begin{align*}
&\CC_1\times \CC_2 &&\textit{ corresponds to } && G_1\tilde\times G_2\subset O_{n^2}^+;\\
&\langle\CC_1\times \CC_2,p\rangle &&\textit{ corresponds to } && G_1\tilde\times_p G_2\subset O_{n^2}^+.
\end{align*}
\end{thmm}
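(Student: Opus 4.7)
The plan is to invoke Woronowicz's Tannaka-Krein duality and reduce both statements to an equality of tensor categories of intertwiners. Once the map \(p\mapsto T_p\) for spatial partitions \(p\in P^{(2)}\) is set up to satisfy \(T_{p_1\times p_2}=T_{p_1}\otimes T_{p_2}\) under the canonical identification \(\C^{n^2}\cong\C^n\otimes\C^n\), the category \(\CC_1\times\CC_2\) acts on \((\C^{n^2})^{\otimes k}\) via \(T_p\) exactly by the tensor products of the maps coming from \(\CC_1\) and \(\CC_2\). This is the combinatorial backbone of the argument and relies on the material of the earlier sections on spatial partitions.

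For the first assertion, I would first verify directly that every spatial partition \(p=p_1\times p_2\in\CC_1\times\CC_2\) gives rise to an intertwiner of the fundamental representation \(w_{(i,k),(j,l)}=u_{ij}v_{kl}\) of \(G_1\tilde\times G_2\). This follows because \(T_{p_i}\) intertwines the corresponding representation of \(G_i\), and the maximal tensor product structure of \(C(G_1)\otimes_{\text{max}}C(G_2)\) together with the commutation of the two copies ensures that the tensor product map \(T_{p_1}\otimes T_{p_2}\) intertwines \(w^{\otimes k}\) and \(w^{\otimes l}\). Closure of \(\CC_1\times\CC_2\) under tensor product, composition and involution, combined with the compatibility of the assignment \(p\mapsto T_p\) with these operations (already established in the spatial setting), then shows that the whole linear span of \(T_p\), \(p\in\CC_1\times\CC_2\), lies in the intertwiner spaces of \(G_1\tilde\times G_2\).

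The converse inclusion -- that no additional intertwiners appear -- is the key step, and I expect it to be the main obstacle. The cleanest strategy is to invoke Tannaka-Krein: define \(\tilde G\subset O_{n^2}^+\) as the spatial partition quantum group associated to \(\CC_1\times\CC_2\), and argue that \(\tilde G\) satisfies the universal property characterising \(G_1\tilde\times G_2\). Concretely, one must exhibit two \(*\)-homomorphisms \(C(G_i)\to C(\tilde G)\) whose images commute and whose products give back \(w\); these are obtained by reading off from the spatial partitions living on a single level the relations defining \(G_1\) and \(G_2\) individually, while cross-level spatial partitions encode precisely the commutativity of the two copies. The universality then gives a surjection \(C(G_1\tilde\times G_2)\twoheadrightarrow C(\tilde G)\), which combined with the first inclusion forces \(\tilde G=G_1\tilde\times G_2\).

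For the second assertion, the same scheme applies once one observes that adjoining the single spatial partition \(p\) to the generating set of the category enlarges the intertwiner space by the relation \(T_p\in\Hom(w^{\otimes k},w^{\otimes l})\), which is exactly the relation imposed in Definition~\ref{DefAmalgam} to pass from \(G_1\tilde\times G_2\) to \(G_1\tilde\times_p G_2\). Thus \(\langle\CC_1\times\CC_2,p\rangle\) corresponds, under the same Tannaka-Krein argument applied to the enlarged category, to the quotient \(G_1\tilde\times_p G_2\). The delicate point here, which I would take care to check, is that enlarging the category by \(p\) on the combinatorial side corresponds exactly to quotienting by the intertwiner relation on the analytic side, with no spurious additional relations introduced on either side; this is ensured by the compatibility of the spatial assignment \(p\mapsto T_p\) with all category operations.
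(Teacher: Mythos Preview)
Your plan for the easy inclusion (that every $S_p$ with $p\in\CC_1\times\CC_2$ intertwines the fundamental representation of $G_1\tilde\times G_2$) is fine and matches the paper. The gap is in your strategy for the reverse inclusion in the first assertion.

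You propose to produce $^*$-homomorphisms $C(G_i)\to C(\tilde G)$ whose images commute and whose pairwise products recover the generators $w_{(i,k)(j,l)}$. This cannot work: the glued direct product $C(G_1\tilde\times G_2)$ is by definition the $C^*$-\emph{sub}algebra of $C(G_1)\otimes_{\max}C(G_2)$ generated by the products $u_{ij}v_{kl}$, and in general the individual factors $u_{ij}$, $v_{kl}$ do \emph{not} lie in it. Correspondingly, there is no reason for $C(\tilde G)$ to contain elements playing the role of $u_{ij}$ and $v_{kl}$ separately. Moreover, $\CC_1\times\CC_2$ consists only of level-respecting partitions, so there are no ``cross-level spatial partitions encoding commutativity'' to exploit. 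Hence the universal-property argument you sketch does not get off the ground.

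The paper bypasses this obstacle entirely with a dimension count. Having established the inclusion $\lspan\{S_p:p\in\CC_1\times\CC_2\}\subset\Hom_{G_1\tilde\times G_2}(k,l)$, it computes both sides' dimensions: the Haar state on $G_1\tilde\times G_2$ is the restriction of $h_{G_1}\otimes h_{G_2}$ (Wang), and since $\chi_{u_1\otimes u_2}=\chi_{u_1}\chi_{u_2}$ one gets
\[
\dim\Hom_{G_1\tilde\times G_2}(k,l)=h_{G_1}(\chi_{u_1}^{k+l})\,h_{G_2}(\chi_{u_2}^{k+l})=\dim\Hom_{G_1}(k,l)\cdot\dim\Hom_{G_2}(k,l),
\]
which equals the dimension of the span of the $S_p$ via the identification $S_{\binom{p_2}{p_1}}\cong S_{p_1}\otimes S_{p_2}$. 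This forces the inclusion to be an equality.

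Your treatment of the second assertion is essentially the paper's argument and is correct, but note that it relies on the first assertion already being in hand (the paper's proof of the amalgamated case explicitly uses that $\CC_1\times\CC_2$ corresponds to $G_1\tilde\times G_2$).
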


Regarding Question \ref{Q3}, we have the following result.

\begin{thmm}[Thm. \ref{PropMin}]
For $n_1=\ldots =n_m=n$ the maximal category $\Pm$ of all spatial partitions corresponds to $S_n\subset O_{n^m}^+$. As a consequence, we have $S_n\subset G\subset O_{n^m}^+$ for all spatial partition quantum groups; in particular $S_{n^m}\not\subset G\subset O_{n^m}^+$ is possible. 
\end{thmm}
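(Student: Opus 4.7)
The plan is to identify the quantum group corresponding to the maximal category $P^{(m)}$ via Woronowicz's Tannaka--Krein theorem, using the classical fact that $S_n$ is the Banica--Speicher quantum group associated with the category of \emph{all} set partitions.

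First I would recall the classical result that for $S_n\subset O_n^+$ with its standard fundamental representation $u\in M_n(C(S_n))$, one has
\begin{equation*}
\Hom(u^{\otimes a},u^{\otimes b})=\lspan\{T_p\mid p\in P(a,b)\}
\end{equation*}
for all $a,b\in\N$ (this is the content of Banica--Speicher for the maximal category). Next I would consider the tensor-power embedding $S_n\hookrightarrow O_{n^m}^+$ given at the algebra level by the $*$-homomorphism
\begin{equation*}
C(O_{n^m}^+)\to C(S_n),\quad v_{(i_1,\ldots,i_m),(j_1,\ldots,j_m)}\mapsto u_{i_1j_1}\cdots u_{i_mj_m},
\end{equation*}
so that the fundamental $n^m\times n^m$ representation of this copy of $S_n$ is precisely $u^{\otimes m}$ (viewed through the standard isomorphism $\C^{n^m}\cong(\C^n)^{\otimes m}$).

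The key step is then to identify, for each $k,l$, the intertwiner space
\begin{equation*}
\Hom\!\bigl((u^{\otimes m})^{\otimes k},(u^{\otimes m})^{\otimes l}\bigr)=\Hom(u^{\otimes km},u^{\otimes lm})=\lspan\{T_q\mid q\in P(km,lm)\}
\end{equation*}
with $\lspan\{T_p\mid p\in P^{(m)}(k,l)\}$. Here I would invoke Section~\ref{SectLinMMaps} of the paper: by construction, a spatial partition $p\in P^{(m)}(k,l)$ is literally the same combinatorial datum as a set partition $q\in P(km,lm)$, and under the canonical identification $(\C^{n^m})^{\otimes k}\cong(\C^n)^{\otimes km}$ the linear map $T_p$ of the spatial formalism coincides with the Banica--Speicher map $T_q$. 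Combined with Woronowicz's Tannaka--Krein theorem, this identifies the compact matrix quantum group associated with the maximal category $P^{(m)}$ as exactly the image of $S_n$ inside $O_{n^m}^+$ under the above embedding.

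The consequence then follows by the standard order-reversing correspondence: if $\CC\subset P^{(m)}$ is any category of spatial partitions corresponding to a quantum group $G\subset O_{n^m}^+$, then the intertwiner spaces of $G$ are contained in those of the quantum group for $P^{(m)}$, hence $G\supset S_n$; thus $S_n\subset G\subset O_{n^m}^+$. Since $P^{(m)}$ strictly contains (the image in $P^{(m)}$ of) the category $P$ of all partitions for $n^m$ points in general, the subgroup $S_n$ is strictly smaller than $S_{n^m}$, giving the last assertion. The main obstacle I anticipate is purely bookkeeping: one must verify carefully that the linear map attached to a spatial partition in the sense of Section~\ref{SectLinMMaps} really equals the Banica--Speicher map of the underlying flat partition under the chosen tensor-factor identification; every other step is either a known result about $S_n$ or a direct application of Tannaka--Krein.
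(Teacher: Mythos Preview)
Your proposal is correct and coincides with the paper's second (``alternative'') proof of Theorem~\ref{PropMin}: identify the fundamental representation of the embedded $S_n$ with $u^{\otimes m}$, use $\Hom_{S_n}(u^{\otimes km},u^{\otimes lm})=\lspan\{T_q\mid q\in P(km,lm)\}$, and match this with $\lspan\{S_p\mid p\in P^{(m)}(k,l)\}$ via the identification $P^{(m)}(k,l)\cong P(km,lm)$ and the equality $S_p=T_q$ established in the proof of Proposition~\ref{PropSp}. The paper additionally offers a first, more concrete proof that you did not pursue: it builds an explicit $C^*$-isomorphism between $C(S_n)$ and the universal $C^*$-algebra $A$ determined by the relations $R(p)$ for $p$ ranging over the finite generating set of $P^{(2)}$ from Corollary~\ref{CorGenerators}, by checking directly that $u_{i_1j_1}u_{i_2j_2}\in C(S_n)$ satisfy those relations and that $\mathring u_{ij}:=\sum_k v_{(i,k)(j,1)}\in A$ satisfy the magic-unitary relations of $S_n$; this is more hands-on but logically equivalent to your Tannaka--Krein argument.
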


With our approach, we may find many new examples of quantum subgroups of $O_{n^2}^+$. The next two theorems of combinatorial type show that the step from $m=1$ to $m=2$ is huge.

\begin{thmm}[Thm. \ref{PropGenerators}, Cor. \ref{CorGenerators}, Thm. \ref{PropPZwei}]
The category $P^{(2)}$ (resp. $P_2^{(2)}$) of all spatial (resp. spatial pair) partitions is generated by the partitions $\idpart^{(2)}, \paarpart^{(2)}$ and
\begin{align*}
P^{(2)}: &\quad\PaarOneTwopart, \CrossOneIdTwopart, \IdOneCrossTwopart, \FourOneIdTwopart, \IdOneFourTwopart,   \singleton^{(2)};\\
P_2^{(2)}: & \quad \PaarOneTwopart,  \CrossOneIdTwopart, \IdOneCrossTwopart, \PaarPaarOneIdTwopart, \IdOnePaarPaarTwopart&(\textit{note that } \PaarPaarOneIdTwopart\neq\FourOneIdTwopart \textit{ and } \IdOnePaarPaarTwopart\neq\IdOneFourTwopart).
\end{align*}
\end{thmm}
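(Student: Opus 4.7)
The plan is to reduce the problem to the classical Banica--Speicher generation of $P$ (resp.\ $P_2$) applied to each of the two levels separately, and then to use $\PaarOneTwopart$ as a ``glue'' that merges independently-built level-$1$ and level-$2$ pieces into cross-level blocks. First, I would record the classical generation statement (which I assume as input): $P$ is generated as a category by $\paarpart,\idpart,\crosspart,\vierpart,\singleton$, and $P_2$ by $\paarpart,\idpart,\crosspart$. Using this, step one of the proof would show that every spatial partition supported entirely on level $1$ (with identity on level $2$) can be produced from the given generators: $\CrossOneIdTwopart$ supplies crossings, $\FourOneIdTwopart$ (resp.\ $\PaarPaarOneIdTwopart$ in the pair case) supplies the block-joining generator, $\singleton^{(2)}$ combined with $\paarpart^{(2)}$ furnishes single-level singletons via the standard rotation trick, and $\paarpart^{(2)}$ composed with the already-available level-$2$ identity yields one-level pair caps. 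The same argument on the other level uses $\IdOneCrossTwopart$, $\IdOneFourTwopart$ and $\IdOnePaarPaarTwopart$.

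The second and main step is to generate cross-level blocks. Given a target spatial block $B$, decompose its underlying set of points into the part on level $1$ and the part on level $2$. By step one, build a one-level partition on level $1$ that groups the level-$1$ points of $B$ into a single block (and similarly for level $2$), using the category operations within each level. Then insert $\PaarOneTwopart$ between suitably chosen endpoints on the two levels to fuse the two single-level blocks into one cross-level block; the within-level crossings $\CrossOneIdTwopart, \IdOneCrossTwopart$ are used beforehand to align the endpoints that should be glued. Finally, any spatial partition is the composition of a permutation (already generated) with a tensor product of its blocks (each individually generated), so the full category is reached.

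The main obstacle is the bookkeeping in the second step: one must explicitly verify that composing $\PaarOneTwopart$ with the one-level pieces produces exactly the desired cross-level block, without creating unintended connections to neighboring blocks or disconnecting points from their intended block. A clean induction on the number of blocks and on the total number of cross-level adjacencies should handle this, reducing each cross-level block to a single application of $\PaarOneTwopart$ (up to within-level manipulations). For the $P_2^{(2)}$ statement, the same blueprint works, but one has to check that every intermediate partition produced in the induction is still a pair partition: the pair-pair generators $\PaarPaarOneIdTwopart, \IdOnePaarPaarTwopart$ replace the four-block as the mechanism for producing within-level pair structures with arbitrarily many blocks, while $\PaarOneTwopart$ continues to serve as the cross-level glue. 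The absence of a generator producing a block of size greater than two anywhere in the argument is what guarantees we stay inside $P_2^{(2)}$.
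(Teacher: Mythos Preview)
Your overall strategy---build the within-level structure first, then glue across levels using $\PaarOneTwopart$---is exactly the paper's approach. But your Step~1 contains a real gap that the paper handles differently.

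You claim to produce every ``level-$1$ partition with identity on level~$2$'' from the listed generators, and in particular that $\singleton^{(2)}$ together with $\paarpart^{(2)}$ yields ``single-level singletons via the standard rotation trick''. This is not justified, and it is not clear it is even possible at this stage of the argument. The only singleton-producing generator is $\singleton^{(2)}$, which puts a singleton on \emph{both} levels simultaneously; rotations and $\paarpart^{(2)}$ do not change levels. To manufacture a singleton on level~$1$ while keeping the identity on level~$2$ (i.e.\ the partition $\SingSingOneIdTwopart$) one already needs some cross-level manipulation---precisely the kind of thing you defer to Step~2. So Step~1 as stated is circular, or at best incomplete. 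Note also that ``identity on level~$2$'' makes no sense when the target level-$1$ partition has $k\neq l$.

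The paper avoids this trap by \emph{not} insisting on identity on the other level. It builds some $p\in\CC$ with the desired partition on level~$1$ and uncontrolled junk on level~$2$, and separately some $q\in\CC$ with the desired partition on level~$2$. It then forms $p\otimes q$, uses the within-level crossings $\CrossOneIdTwopart,\IdOneCrossTwopart$ to permute columns so that the tensor product splits as $r\otimes s$ with $r$ the desired level-respecting partition, and finally composes with $\singleton^{(2)}$ and its adjoint to kill~$s$. This is cleaner and sidesteps the issue entirely. Your Step~2 and the final assembly are fine in outline and match the paper's treatment (the paper is more explicit about the upper-to-lower cross-level connection, building auxiliary partitions $v,w$ for that case). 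For $P_2^{(2)}$ the same correction applies: the paper constructs $pp^*$ on one level (with junk on the other), then tensors, permutes, and caps off with $\paarpart^{(2)}$, and uses $\CrossOneTwopart$ (built from $\PaarOneTwopart$ and the crossings) to mix levels.
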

Recall that in the case $m=1$, we have $P=\langle\crosspart,\vierpartrot,\singleton\rangle$ and $P_2=\langle\crosspart\rangle$ (see \cite{Web}).

\begin{thmm}[Thm. \ref{ThmEx}]
The following subcategories of $P_2^{(2)}$ are all distinct:
\[\langle\emptyset\rangle,  \langle\halflibpart^{(2)}\rangle,\langle \CrossOneCrossTwopart\rangle,\langle\PaarPaarOneTwopart\rangle,\langle\CrossOneTwopart\rangle,\langle\CrossOneTwopart,\PaarPaarOneTwopart\rangle,\langle\PaarOneTwopart\rangle,\langle\CrossOneTwopart,\PaarOneTwopart\rangle,P_2^{(2)}, \CC_1\times\CC_2 \]
with $\CC_i\in\{NC_2,\langle\halflibpart\rangle,P_2\}$ (non-exhaustive list).
\end{thmm}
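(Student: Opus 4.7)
The plan is to organise the eighteen listed categories into a Hasse diagram under inclusion and to exhibit, for each covering relation, an explicit spatial partition separating the two categories.

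Several structural invariants will be needed. First, the \emph{level-projection} $\pi_j(\CC)\subset P$, defined as the set of level-$j$ restrictions of partitions in $\CC$, satisfies $\pi_j(\CC_1\times\CC_2)=\CC_j$, so the pair $(\pi_1(\CC),\pi_2(\CC))$ separates the nine product categories with $\CC_i\in\{NC_2,\langle\halflibpart\rangle,P_2\}$ pairwise. Second, one can ask whether $\CC$ \emph{equals} the product $\pi_1(\CC)\times\pi_2(\CC)$: this holds for all nine product categories but fails for $\langle\CrossOneCrossTwopart\rangle$ (which sits diagonally inside $P_2\times P_2$, since symmetric generators cannot produce level-asymmetric partitions) and for $\langle\emptyset\rangle$ (which sits symmetrically inside $NC_2\times NC_2$). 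Third, \emph{split-block} presence---whether any partition in $\CC$ contains a block meeting both levels---distinguishes further: each of $\langle\halflibpart^{(2)}\rangle$, $\langle\PaarPaarOneTwopart\rangle$, $\langle\CrossOneTwopart\rangle$, $\langle\PaarOneTwopart\rangle$, the two mixed-generator categories, and $P_2^{(2)}$ contains split blocks, whereas the product-like categories $\langle\emptyset\rangle$, $\langle\CrossOneCrossTwopart\rangle$ and all $\CC_1\times\CC_2$ do not (the split-block-free property being preserved under tensor product, composition, and involution).

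With these invariants I would arrange the eighteen categories into two strata: product-like (nine products plus $\langle\emptyset\rangle$ and $\langle\CrossOneCrossTwopart\rangle$) and split-block-containing (the remaining seven entries). Within each stratum, separators come from the projection invariant, the product-factorisation invariant, and direct inspection of small spatial partitions. For instance $\CrossOneCrossTwopart\notin\langle\emptyset\rangle$ because the level-projections of partitions in $\langle\emptyset\rangle$ lie in $NC_2\subsetneq P_2$; and $\PaarOneTwopart\notin\langle\CrossOneTwopart\rangle$ because the shape of split blocks (``pair-crossing'' vs.\ ``crossing-pair'') is preserved by the category operations.

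The main obstacle is separating the four ``twisted'' categories $\langle\CrossOneTwopart\rangle$, $\langle\PaarOneTwopart\rangle$, $\langle\CrossOneTwopart,\PaarPaarOneTwopart\rangle$ and $\langle\CrossOneTwopart,\PaarOneTwopart\rangle$ from each other and from $P_2^{(2)}$, since their generators share both level-projections and basic split-block profile. Here I would introduce a finer invariant---a parity or colouring on split blocks, preserved by tensor product, composition, and involution but sensitive to the precise gluing pattern between the two levels (for example, an $\mathbb{F}_2$-valued invariant counting ``inter-level twists'' of split blocks modulo the relations enforced by composition with $\idpart^{(2)}$ and $\paarpart^{(2)}$)---and supplement this by a direct enumeration of spatial pair partitions on a small number of upper and lower points to complete the separation.
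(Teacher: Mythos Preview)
Your proposal contains both a concrete error and a genuine gap at the hardest step.

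First the error: $\langle\halflibpart^{(2)}\rangle$ is the amplification $[\langle\halflibpart\rangle]^{(2)}$, so every partition in it respects the levels. It therefore belongs to your ``product-like'' (split-block-free) stratum, not to the split-block stratum where you placed it. This also means your two strata no longer separate $\langle\halflibpart^{(2)}\rangle$ from the nine Kronecker products by split-block presence alone; you need something like the level-symmetry invariant (which you use implicitly elsewhere) to tell $\langle\halflibpart^{(2)}\rangle$ apart from, say, $\langle\halflibpart\rangle\times NC_2$.

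The real gap is in separating $\langle\PaarPaarOneTwopart\rangle$, $\langle\CrossOneTwopart\rangle$, $\langle\CrossOneTwopart,\PaarPaarOneTwopart\rangle$, $\langle\PaarOneTwopart\rangle$, $\langle\CrossOneTwopart,\PaarOneTwopart\rangle$ and $P_2^{(2)}$ from one another. You propose to ``introduce a finer invariant---a parity or colouring on split blocks'' without ever defining it, checking that it is preserved under composition, or computing its value on the generators. This is precisely where the work lies, and a vague $\F_2$-valued twist count is not a proof: under composition with auxiliary pair partitions, split blocks can be created, merged, or cancelled, and it is not at all obvious that any naive parity survives.

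The paper's argument avoids this by writing down five explicit subcategories of $P_2^{(2)}$ that serve as testing sieves: the level-respecting partitions $\CC_{\resplevels}$, the level-symmetric partitions $\CC_{\symm}$, those in $\CC_{\symm}$ with no diagonal inter-level strings ($\CC_{\nodiagonal}^{\symm}$), those in $\CC_{\symm}$ with no vertical inter-level strings ($\CC_{\noviceversa}^{\symm}$), and those with $k+l$ even ($\CC_{\even}$). Each is verified once to be closed under the category operations, and then a containment table with these five columns distinguishes all the listed categories. Your level-projection and split-block ideas correspond roughly to $\CC_{\resplevels}$ and $\CC_{\symm}$, but it is $\CC_{\nodiagonal}^{\symm}$, $\CC_{\noviceversa}^{\symm}$ and $\CC_{\even}$ that do the delicate work you left unspecified. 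Replacing your undefined parity invariant by these three concrete categories would complete your argument.
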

Recall that in the case $m=1$, we have exactly three subcategories of $P_2$, namely $NC_2=\langle\emptyset\rangle,\langle\halflibpart\rangle$ and $P_2=\langle\crosspart\rangle$ (see \cite{Web}).

We end the article (Section \ref{SectOutlook}) with an outlook in the unitary case. In particular, we define a free product $\CC_1*\CC_2$ of two categories due to a certain noncrossing condition\footnote{Note that it is not so clear a priori how to define noncrossing three-dimensional partitions.} between the levels. 

\pagebreak

\section{The combinatorics: spatial partitions and categories}

Let us first introduce the combinatorics of our objects.

\subsection{Partitions} 
\label{SectPartitions}

Let $k,l\in\N_0=\{0,1,2,\ldots\}$ and consider the ordered set \linebreak$\{1,\ldots,k,k+1,\ldots,k+l\}$. A \emph{(set) partition} is a decomposition of this set into disjoint subsets, the \emph{blocks}. We usually speak of the points $1,\ldots,k$ as ``upper points'' while $k+1,\ldots,k+l$ are ``lower points''. We identify a partition with the picture placing $k$ points on an upper line, $l$ points on a lower line and connecting these points by strings according to the block pattern (where the upper points are numbered from left to right whereas the lower points are numbered from right to left). The set of all partitions with $k$ upper and $l$ lower points is denoted by $P(k,l)$ and we put $P:=\bigcup_{k,l\in\N_0} P(k,l)$.

\begin{ex}\label{Expq}
Let $k=4$ and $l=3$. The partitions 
\[p=\{\{1,2\},\{3,4,5\},\{6,7\}\} \qquad\textnormal{and}\qquad
q=\{\{1,6\},\{2,7\},\{3,4\},\{5\}\}\]
in $P(4,3)$ are represented by the following pictures.

\setlength{\unitlength}{0.5cm}
\begin{center}
\begin{picture}(12,5)
\put(-2, 2){$p=$}
\put(-1,4.85){\partii{1}{1}{2}}
\put(-1,4.85){\partii{1}{3}{4}}
\put(-1,0.85){\uppartii{1}{1}{2}}
\put(-1,0.85){\upparti{2}{3}}
\put(0.05,0){7}
\put(1.05,0){6}
\put(2.05,0){5}
\put(0.05,4.3){1}
\put(1.05,4.3){2}
\put(2.05,4.3){3}
\put(3.05,4.3){4}
\put(6,2){$q=$}
\put(8.3,3.85){\line(1,-3){1}}
\put(7,4.85){\partii{1}{3}{4}}
\put(8.3,0.85){\line(1,3){1}}
\put(7,0.85){\upparti{1}{3}}
\put(8.05,0){7}
\put(9.05,0){6}
\put(10.05,0){5}
\put(8.05,4.3){1}
\put(9.05,4.3){2}
\put(10.05,4.3){3}
\put(11.05,4.3){4}
\end{picture}
\end{center}
\end{ex}

We usually omit to write the numbers in the picture. If the strings of a partition may be drawn in such a way that they do not cross, we call it a \emph{noncrossing partition}, denoting by $NC\subset P$ the subset of all noncrossing partitions. Note that in Example \ref{Expq}, the partition $p$ is in $NC$ while $q$ is not.

\begin{ex}
Here are some examples of partitions in $P$.
\begin{itemize}
\item[(a)] The \emph{identity partition} $\idpart\in P(1,1)$.
\item[(b)] The \emph{pair partitions} $\paarpart\in P(0,2)$ and $\baarpart\in P(2,0)$.
\item[(c)] The \emph{singleton partitions} $\singleton\in P(0,1)$ and $\downsingleton\in P(1,0)$.
\end{itemize}
\end{ex}

Partitions are well-known objects in mathematics, see for instance \cite{St, NS, BS, TWcomb}.

\subsection{Spatial partitions}
\label{SectAdapted}

Let us now introduce the new notion of spatial partitions. Let $m\in\N$ and $k,l\in\N_0$. Consider the set
\[\{1,\ldots,k,k+1,\ldots,k+l\}\times\{1,\ldots,m\}.\]
A \emph{spatial partition (on $m$ levels)} is a decomposition of this set into disjoint subsets (\emph{blocks}). We sometimes also simply write \emph{partition}, when it is clear that we speak of spatial partitions.
The set of all such spatial partitions is denoted by $\Pm(k,l)$ and we put $\Pm:=\bigcup_{k,l\in\N_0}\Pm(k,l)$.
Again, the points $(1,y),\ldots,(k,y)$ are seen as upper points and the points $(k+1,y),\ldots,(k+l,y)$ as lower ones, for $y\in\{1,\ldots,m\}$. Furthermore, if $(x,y)\in \{1,\ldots,k,k+1,\ldots,k+l\}\times\{1,\ldots,m\}$ is a point of a partition $p\in\Pm(k,l)$, we call its second component $y$ the \emph{level} of the point. We say that a partition $p\in\Pm$ \emph{respects the levels}, if whenever two points $(x_1,y_1)$ and $(x_2,y_2)$ are in the same block of $p$, then $y_1=y_2$.

We view a spatial partition as a three-dimensional partition having an upper plane consisting of $k\times m$ points and a lower plane of $l\times m$ points. Thus,  the $m$ levels are nothing but a new dimension in our pictorial representation.

\begin{ex}
Let $m=3$, $k=2$ and $l=4$.
 The following partitions are in $P^{(3)}(2,4)$ and $p$ respects the levels while $q$ does not.
\setlength{\unitlength}{0.5cm}
\begin{center}
\begin{picture}(20,8)
\put(0,3){$p=$}
\newsavebox{\BoxP}
   \savebox{\BoxP}
   { \begin{picture}(10,8)
\put(2,1){\usebox{\boxGrLLL}}
\put(3,1){\usebox{\boxGrLLL}}
\put(4,1){\usebox{\boxGrLLL}}
\put(5,1){\usebox{\boxGrLLL}}
\put(2,5){\usebox{\boxGrLLL}}
\put(3,5){\usebox{\boxGrLLL}}
\put(2,1){\line(0,1){4}}
\put(3,1){\line(0,1){0.4}}
\put(4,1){\line(0,1){0.4}}
\put(3,1.4){\line(1,0){1}}
\put(5,1){\line(-1,2){2}}
\put(2.6,1.7){\line(0,1){0.4}}
\put(3.6,1.7){\line(0,1){0.4}}
\put(4.6,1.7){\line(0,1){0.4}}
\put(5.6,1.7){\line(0,1){0.4}}
\put(2.6,2.1){\line(1,0){3}}
\put(3.2,2.4){\line(0,1){0.4}}
\put(4.2,2.4){\line(0,1){0.4}}
\put(6.2,2.4){\line(0,1){0.4}}
\put(3.2,2.8){\line(1,0){3}}
\put(2.6,5.7){\line(0,-1){0.4}}
\put(3.6,5.7){\line(0,-1){0.4}}
\put(3.2,6.4){\line(0,-1){0.4}}
\put(4.2,6.4){\line(0,-1){0.4}}
     \end{picture}}      
\put(0,0){\usebox{\BoxP}}  
\put(5.5,2.4){\line(-1,3){1.2}}   
\put(3.5,6){\line(1,0){1}}
\put(10,3){$q=$}
\put(10,0){\usebox{\BoxP}}     
\put(15.5,2.4){\line(-2,3){1.9}}   
\put(12.9,5.3){\line(1,0){1}}
\end{picture}
\end{center}
\end{ex}

We observe that there is no canonical definition of noncrossing partitions in three dimensions.

\begin{rem}\label{RemPmAndP}
For any $m\in\N$, $k,l\in\N_0$, the sets 
\[A:=\{1,\ldots,km,km+1,\ldots,km+lm\}\]
and
\[B:=\{1,\ldots,k,k+1,\ldots,k+l\}\times\{1,\ldots,m\}\]
are in bijective correspondence by identifying a point $(x-1)m+y\in A$, $1\leq x\leq k+l$, $1\leq y\leq m$ with the point $(x,y)\in B$. Thus, the sets $\Pm(k,l)$ and  $P(km,lm)$ are isomorphic. In particular, for $m=1$, spatial partitions (on one level) are simply the well-known partitions in the sense of Section \ref{SectPartitions}.
\end{rem}

\begin{defn}\label{DefmAmplifiedPart}
If $p\in P(k,l)$ is a partition, then $p^{(m)}\in \Pm(k,l)$ given by repeating $p$ on each level $1\leq s\leq m$ is the \emph{amplified version of $p$ (on $m$ levels)}. It respects the levels.
\end{defn}

\begin{ex}\label{ExIdPaar}
The amplified partitions $\idpart^{(4)}$ and $\paarpart^{(4)}$ are the following partitions.\setlength{\unitlength}{0.5cm}
\begin{center}
\begin{picture}(18,8)
\put(0,2.5){$\idpart^{(4)}=$}
\put(3,1){\usebox{\boxGrLLLL}}
\put(3,4){\usebox{\boxGrLLLL}}
\put(3,1.2){\line(0,1){3}}
\put(3.6,1.9){\line(0,1){3}}
\put(4.2,2.6){\line(0,1){3}}
\put(4.8,3.3){\line(0,1){3}}
\put(9,2.5){$\paarpart^{(4)}=$}
\put(13,2){\usebox{\boxGrLLLL}}
\put(14,2){\usebox{\boxGrLLLL}}
   \savebox{\BoxPa}
   { \begin{picture}(1,1)
   \put(0,0){\line(0,1){0.4}}
   \put(1,0){\line(0,1){0.4}}
   \put(0,0.4){\line(1,0){1}}
   \end{picture}}
\put(12.7,2.2){\usebox{\BoxPa}}
\put(13.3,2.9){\usebox{\BoxPa}}
\put(13.9,3.6){\usebox{\BoxPa}}
\put(14.5,4.3){\usebox{\BoxPa}}
\end{picture}
\end{center}
\end{ex}

\subsection{Categories of spatial partitions}
\label{SectMCateg}

For a fixed $m\in\N$, we have the following operations on the set $\Pm$, the so called \emph{category operations}.
\begin{itemize}
 \item  The \emph{tensor product} of two spatial partitions $p\in \Pm(k,l)$ and $q\in \Pm(k',l')$ is the spatial partition $p\otimes q\in \Pm(k+k',l+l')$ obtained by writing $p$ and $q$ side by side.
 \item The \emph{composition} of two spatial partitions $q\in \Pm(k,r)$ and $p\in \Pm(r,l)$ is the spatial partition  $pq\in \Pm(k,l)$ obtained by writing $p$ below $q$, joining their strings by identifying the lower resp. upper  $r\times m$-planes of points, and erasing the strings which are disconnected from the upper $k\times m$-plane and the lower $l\times m$-plane.
 \item The \emph{involution} of a spatial partition $p\in \Pm(k,l)$ is given by the spatial partition $p^*\in \Pm(l,k)$ obtained when swapping the upper with the lower plane.
 \end{itemize}
 
\begin{defn}\label{DefmCateg}
A subset $\CC\subset \Pm$ is a \emph{category of spatial partitions}, if $\CC$ is closed under tensor product, composition and involution, and if it contains the amplified identity partition $\idpart^{(m)}\in \Pm(1,1)$ and the amplified pair partition $\paarpart^{(m)}\in \Pm(0,2)$. 
\end{defn}

 We write $\CC=\langle p_1,\ldots, p_n\rangle$, if $\CC$ is the smallest category containing $p_1,\ldots,p_n\in \Pm$. We then speak of the category \emph{generated by} $p_1,\ldots,p_n$. We omit to write $\paarpart^{(m)}$ and $\idpart^{(m)}$ as generators since they are always contained in a category.

\begin{rem}\label{RemBaseCase}
In the case $m=1$, the above category operations as well as categories of partitions were first introduced by Banica and Speicher \cite{BS}; see also  \cite{TWcomb,VSW} for concrete examples of these operations in that case. We now extend their definition to the three-dimensional setting in a canonical way, but let us note another aspect of the passage from $m=1$ to arbitrary $m\in\N$. Observe that the isomorphism $\Pm(k,l)\cong P(km,lm)$ of  Remark \ref{RemPmAndP} respects the category operations. Hence, if we view $\Pm$ as a subset of $P$, a category $\CC\subset \Pm$ of spatial partitions corresponds to a set $\CC'\subset P$ which is closed under the category operations (as operations in $P$). However, $\CC'$ is \emph{not} a category of partitions in Banica-Speicher's sense, since it does not contain the base partitions $\paarpart$ nor $\idpart$. From this point of view, we somehow modified Banica and Speicher's definition of  categories of partitions $\CC\subset P$ by simply replacing the base partitions $\paarpart\in P$ and $\idpart\in P$ by different ones, namely by
\[\paarpart^{(m)}\in \Pm(0,2) \quad\longleftrightarrow \quad\{\{1,m+1\},\{2,m+2\},\ldots,\{m,2m\}\}\in P(0,2m)\]
and
\[\idpart^{(m)}\in \Pm(1,1) \quad\longleftrightarrow \quad \idpart^{\otimes m}\in P(m,m)\]
using the isomorphism $\Pm(k,l)\cong P(km,lm)$ of Remark \ref{RemPmAndP}.
From the combinatorial point of view, there is no difficulty in choosing different base partitions for Banica and Speicher's categories of partitions, but so far a quantum group interpretation of such categories was missing. In this article, we provide one for the case of $\paarpart^{(m)}$ and $\idpart^{(m)}$.
\end{rem}

\begin{defn}
Let $\CC\subset P$ be a set of partitions. Using the notation of Definition \ref{DefmAmplifiedPart}, we denote by 
\[[\CC]^{(m)}:=\{p^{(m)}\;|\; p\in\CC\}\subset \Pm\]
 the \emph{amplification} of $\CC$.
\end{defn}
 
\begin{lem}\label{LemAmplif}
If $\CC\subset P$ is a category of partitions, then the amplification $[\CC]^{(m)}\subset \Pm$ is a category of spatial partitions.
\end{lem}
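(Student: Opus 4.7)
The plan is to verify directly each of the five defining properties of a category of spatial partitions from Definition \ref{DefmCateg}: closure under tensor product, composition, and involution, plus containment of the base partitions $\idpart^{(m)}$ and $\paarpart^{(m)}$.

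The key observation driving the whole argument is that amplification intertwines the three category operations. More precisely, I would first establish the identities
\[
(p\otimes q)^{(m)} \;=\; p^{(m)}\otimes q^{(m)}, \qquad
(pq)^{(m)} \;=\; p^{(m)}\,q^{(m)}, \qquad
(p^*)^{(m)} \;=\; (p^{(m)})^*
\]
for all $p,q\in P$ for which the relevant operation is defined. Each of these follows from unwinding the definitions at the level of blocks, using that $p^{(m)}$ has, by construction, blocks $\{(x,y)\;|\;x\in B\}$ for each block $B$ of $p$ and each level $y\in\{1,\ldots,m\}$. The tensor product identity is immediate from placing partitions side by side, and the involution identity is immediate from swapping upper and lower planes.

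Once these three identities are in hand, closure of $[\CC]^{(m)}$ under the category operations is immediate: any element of $[\CC]^{(m)}$ is of the form $r^{(m)}$ with $r\in\CC$, so applying an operation to such amplifications produces the amplification of the corresponding operation performed in $\CC$, which again lies in $\CC$ by hypothesis. Containment of $\idpart^{(m)}$ and $\paarpart^{(m)}$ then follows since $\idpart,\paarpart\in\CC$ by the definition of a category of partitions in Banica and Speicher's sense.

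The only step that genuinely needs care is the composition identity $(pq)^{(m)}=p^{(m)}q^{(m)}$; this is where the three-dimensional structure enters. One must verify that the closed strings erased when forming $p^{(m)}q^{(m)}$ correspond exactly, after rearranging by level, to $m$ copies of those erased when forming $pq$. The crucial point is that $p^{(m)}$ and $q^{(m)}$ both respect the levels in the sense of Section \ref{SectAdapted}, so when the middle $r\times m$ plane is identified, no string can cross from one level to another: the gluing decouples into $m$ independent copies of the ordinary composition $pq$ in $P$. This is the main (and essentially only) obstacle, and it reduces the lemma to the routine block-tracking that Banica and Speicher already handle for $m=1$.
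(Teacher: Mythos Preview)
Your proposal is correct and is precisely the ``direct proof is straightforward'' that the paper alludes to without spelling out; the paper also notes an alternative route via the isomorphism $P^{(m)}(k,l)\cong P(km,lm)$ of Remark~\ref{RemPmAndP}, which respects the category operations and thereby transports the result from the $m=1$ case. Both arguments amount to the same level-by-level decoupling you describe, so there is no substantive difference.
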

\begin{proof}
A direct proof is straightforward. Alternatively, one can use the fact that the isomorphism of Remark \ref{RemPmAndP} respects the category operations.
\end{proof}
 
\begin{ex}\label{ExCateg}
Here are examples of categories of spatial partitions. We will see more exotic ones in Section \ref{SectExamples}.
\begin{itemize}
\item[(a)] The set $\Pm$ of all spatial partitions is a category of spatial partitions. It is maximal in the sense that it contains all other categories of spatial partitions.
We have $[P]^{(m)}\neq \Pm$ for $m\neq 1$, since a spatial partition in the amplification $[P]^{(m)}$ of $P$ consists of $m$ copies of a partition from $P$ to all levels; the set $\Pm$ in turn is much larger containing \emph{any} spatial partition.
\item[(b)] The set $\Pm_2$ of all spatial pair partitions (i.e. all blocks consist of exactly two points) is a category of spatial partitions. Again, we have $[P_2]^{(m)}\neq \Pm_2$ for $m\neq 1$.
\item[(c)] The amplification $[NC_2]^{(m)}$ of $NC_2$ is the minimal category of spatial partitions. It is generated by $\idpart^{(m)}$ and $\paarpart^{(m)}$.
Note that $[NC_2]^{(m)}$ is \emph{not} the set of all noncrossing pair partitions in $P^{(m)}$. In fact, it is not clear in the three-dimensional picture what a noncrossing partition is supposed to be -- only the identification $P^{(m)}(k,l)\cong P(km,lm)$ allows for a notion of noncrossing partitions. However, the set $\bigcup_{k,l\in\N_0} NC(km,lm)$ seen as a subset of $P^{(m)}$ is not a category of spatial partitions. It is closed under the category operations, but it does not contain $\paarpart^{(m)}$ (see Remark \ref{RemBaseCase}).
\end{itemize}
\end{ex}

Let us mention  another useful operation on the set $\Pm$. 
We define the \emph{$m$-rotation} by the following. Let $p\in \Pm(k,l)$ and consider the upper plane of points of $p$ consisting of $k$ rows, each row consisting of $m$ points.
Let $q\in \Pm(k-1,l+1)$ be the spatial partition obtained from $p$ by shifting the leftmost upper row of $p$ to the left of the lower plane without changing the order of the points nor the strings attached to these points. We say that $q$ is a rotated version of $p$. Likewise we may rotate on the right hand side and we may rotate lower points to the upper plane. As an example, observe that $\paarpart^{(m)}$ is obtained from $\idpart^{(m)}$ by $m$-rotation. We refer to \cite[Sect. 1.2]{TWcomb} for examples of $1$-rotation. Note that the $m$-rotation does not affect the level of a point when being rotated.

\begin{lem}
Every category of spatial partitions is closed under $m$-rotation.
\end{lem}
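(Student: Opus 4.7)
The plan is to realize each $m$-rotation move as a combination of tensor products, compositions, and involutions applied to the base partitions $\idpart^{(m)}, \paarpart^{(m)}$, their involutions, and the given partition $p$; closure of any category $\CC$ under $m$-rotation will then follow immediately, since $\CC$ is closed under all three category operations and contains both base partitions by Definition \ref{DefmCateg}.

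Concretely, for $p \in \Pm(k,l)$ with $k \geq 1$, my candidate formula for the left-hand rotation is
\[
q \;:=\; \bigl(\idpart^{(m)}\otimes p\bigr)\circ\bigl(\paarpart^{(m)}\otimes(\idpart^{(m)})^{\otimes(k-1)}\bigr)\;\in\;\Pm(k-1,\,l+1).
\]
I would verify pictorially that this produces exactly the partition obtained from $p$ by shifting the leftmost upper row to the leftmost lower position: in the composition, the rightmost $k-1$ middle rows are matched straight by the identities in both factors, so the $k-1$ non-leftmost upper rows of $p$ become the $k-1$ upper rows of $q$; meanwhile, the leftmost upper row of $p$ (which sits as the second upper row of the lower factor) is bent by $\paarpart^{(m)}$ around to the single $\idpart^{(m)}$ on the left of the lower factor, and therefore lands on the leftmost lower row of $q$ with all of its original block connections to the rest of $p$ preserved. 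For the symmetric rotation (rightmost upper to rightmost lower) I would place $\paarpart^{(m)}$ on the right; for the two rotations sending a lower row to the upper plane I would use $\baarpart^{(m)} := (\paarpart^{(m)})^*$ in place of $\paarpart^{(m)}$ and swap the roles of top and bottom factor.

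The closure conclusion is then immediate: $\paarpart^{(m)}, \idpart^{(m)} \in \CC$ by hypothesis, $\baarpart^{(m)} \in \CC$ by closure under involution, all tensor products of these with themselves and with $p$ remain in $\CC$, and the final composition is in $\CC$. The only real point of care is the combinatorial bookkeeping in the verification of the formula, which is a nuisance because one must track all $m$ levels simultaneously; this can be sidestepped by invoking the isomorphism $\Pm(k,l)\cong P(km,lm)$ of Remark \ref{RemPmAndP}, under which my candidate $q$ corresponds to the result of $m$ successive classical $1$-rotations applied to the image of $p$, using the classical base partitions $\idpart$ and $\paarpart$ identified as in Remark \ref{RemBaseCase}. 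The statement then reduces to the well-known classical rotation lemma of Banica and Speicher \cite{BS}.
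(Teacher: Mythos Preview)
Your proof is correct and essentially identical to the paper's own argument: the paper gives exactly the same formula $q=(\idpart^{(m)}\otimes p)(\paarpart^{(m)}\otimes(\idpart^{(m)})^{\otimes(k-1)})$ for the leftmost-upper-to-lower rotation and then remarks that the remaining three rotation moves are handled similarly, referring to \cite[Lem.~1.1]{TWcomb}. Your additional remark about reducing to the classical $1$-rotation via the isomorphism $\Pm(k,l)\cong P(km,lm)$ is a nice sanity check but not needed for the argument.
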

\begin{proof}
Let $p\in \Pm(k,l)$. The spatial partition $q:=(\idpart^{(m)}\otimes p)(\paarpart^{(m)}\otimes\idpart^{((k-1)m)})$ arises from $p$ by $m$-rotating the leftmost upper row of $p$ to the lower plane. Similarly for the rotations from the lower plane to the upper plane, and for rotations on the right hand side. See also \cite[Lem. 1.1]{TWcomb}
\end{proof}

\subsection{$\pi$-graded spatial partitions and categories}\label{SectPiGraded}

Later, we will need spatial partitions that are allowed to mix certain levels -- but not all levels. This is captured by the following definition. The idea is to cluster into a block each set of levels that are allowed to be interchanged  and to decompose the set $\{1,\ldots,m\}$ accordingly. This is encoded in a partition $\pi\in P(m)$, the grading partition.

\begin{defn}\label{DefPiGraded}
Let $m\in \N$ and let $\pi\in P(m)$ be a partition of $m$ points. A spatial partition $p\in \Pm$ is \emph{$\pi$-graded}, if whenever two points $(x_1,y_1)$ and $(x_2,y_2)$ are in the same block of $p$, then $y_1$ and $y_2$ are in the same block of $\pi$. The partition $\pi$ is called the \emph{grading partition}. We denote by $\Pm_\pi$ the set of all $\pi$-graded (spatial) partitions in $\Pm$.
\end{defn}

If $\pi$ consists only of singletons, the $\pi$-graded partitions are exactly those that respect the levels. If $\pi$ is the one block partition, then every partition in $\Pm$ is $\pi$-graded. As a nontrivial example, let $m=4$ and $\pi=\{\{1,3\},\{2,4\}\}$. Then a partition $p\in\Pm$ is $\pi$-graded if and only if no block of $p$ contains points from an odd level \emph{and} from an even level.

\begin{defn}
A category of spatial partitions $\CC$ is \emph{$\pi$-graded}, if all partitions in $\CC$ are $\pi$-graded. 
\end{defn}

\begin{lem}\label{LemPiGrading}
Let $m\in\N$.
\begin{itemize}
\item[(a)] Let $\pi\in P(m)$ be a grading partition. If $p$ and $q$ in $\Pm$ are $\pi$-graded, then so are $p\otimes q, pq, p^*$ or any $m$-rotation of $p$ or $q$. 
\item[(b)] The set $\Pm_\pi$ of all $\pi$-graded partitions in $\Pm$ is a category of spatial partitions.
\item[(c)] If $p_1,\ldots,p_k$ are $\pi$-graded, so is the category $\langle p_1,\ldots,p_k\rangle$ generated by them.
\end{itemize}
\end{lem}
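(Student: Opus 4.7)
The proof is essentially a check that the grading condition survives each category operation, after which (b) and (c) fall out formally. I would organize it as follows.

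For part (a), I would go through the three category operations plus the $m$-rotation one by one. The key observation, which I would state once and use throughout, is that all four operations preserve the level of each point: the tensor product puts $p$ and $q$ side by side without moving any point vertically; the involution swaps upper and lower planes but keeps levels fixed; the $m$-rotation shifts a row from upper to lower (or vice versa) while, as noted just before the lemma, leaving each point's level unchanged; and in the composition $pq$, the identification of the lower plane of $q$ with the upper plane of $p$ matches a point $(x,y)$ of $q$ with the point $(x,y)$ of $p$ at the same level $y$. With this in hand, the tensor product and involution are immediate: blocks of $p\otimes q$ are blocks of $p$ or blocks of $q$, and blocks of $p^*$ are the blocks of $p$, so if no block of $p$ or $q$ mixes different $\pi$-classes of levels, neither do the blocks of $p\otimes q$ or $p^*$. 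The same applies to any $m$-rotation, since rotation neither merges blocks nor alters levels.

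The only step that requires a small argument is the composition $pq$. A new block of $pq$ is obtained by chaining blocks of $p$ with blocks of $q$ via shared middle points. Because middle points are identified level-by-level, consecutive blocks in such a chain share a point at a common level $y$, so the set of levels appearing in the resulting merged block is the union of the level-sets of the blocks composing the chain, all of which lie in a single block of $\pi$ by hypothesis. Hence the merged block lies in a single block of $\pi$, i.e.\ $pq$ is $\pi$-graded.

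For part (b), I would simply combine part (a) with the observation that the two required base partitions are $\pi$-graded for \emph{every} grading $\pi$: both $\idpart^{(m)}$ and $\paarpart^{(m)}$ are amplifications (Definition~\ref{DefmAmplifiedPart}) and hence respect the levels, so in particular no block of either meets more than one $\pi$-class. Thus $P^{(m)}_\pi$ contains the base partitions and is closed under the three category operations, which is the definition of a category of spatial partitions.

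For part (c), I would argue by the usual closure argument: the category $\langle p_1,\ldots,p_k\rangle$ is by definition the smallest category of spatial partitions containing $p_1,\ldots,p_k$, hence is contained in every such category. By hypothesis each $p_i$ is $\pi$-graded, so $\{p_1,\ldots,p_k\}\subset P^{(m)}_\pi$, and by part (b) $P^{(m)}_\pi$ is itself a category of spatial partitions. Therefore $\langle p_1,\ldots,p_k\rangle\subset P^{(m)}_\pi$, which is precisely the statement that every element of the generated category is $\pi$-graded. I do not anticipate any real obstacle; the only point worth writing carefully is the level-matching in the composition, since that is where blocks are actually merged and where the grading condition could in principle be violated.
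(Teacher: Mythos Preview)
Your proof is correct and follows the same approach as the paper, which simply records that (a) is a straightforward verification and that (b) and (c) follow immediately from it. You have merely supplied the details that the paper leaves to the reader, including the only nontrivial point, namely the level-matching argument for the composition.
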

\begin{proof}
The proof of (a) is straightforward, and (b) and (c) follow immediately. 
\end{proof}

\subsection{Generators of $\Pm_{\pi}$, $P^{(2)}$ and $P_2^{(2)}$}

In the case $m=1$, it is not difficult to see that $P$ is generated by  $\vierpartrot$, $\singleton$ and $\crosspart$. This allows us to define natural further categories like $\langle\vierpartrot\rangle$ or $\langle\vierpartrot, \singleton\rangle$, see for instance \cite{Web}. We are thus interested in finding canonical generators of the category $\Pm$, the maximal category of spatial partitions. We refine the statement by considering $\pi$-graded partitions, including the case $\Pm$ when $\pi$ is the one block partition on $m$ points.

\begin{thm}\label{PropGenerators}
Let $\pi\in P(m)$ be a grading partition. The category $\Pm_\pi$ of all $\pi$-graded partitions
is generated by the following partitions besides the base partitions $\idpart^{(m)}$ and $\paarpart^{(m)}$:
\begin{itemize}
\item[(i)] The singleton partition $\singleton^{(m)}$.
\item[(ii)] For $i=1,\ldots,m$, the partition given by $\vierpartrot$ on level $i$ and $\idpart\otimes\idpart$ on all other levels. For $m=2$ this amounts to $\FourOneIdTwopart\in P^{(2)}(2,2)$ and $\IdOneFourTwopart\in P^{(2)}(2,2)$.
\item[(iii)]  For $i=1,\ldots,m$, the partition given by $\crosspart$ on level $i$ and $\idpart\otimes\idpart$ on all other levels. For $m=2$ this amounts to $\CrossOneIdTwopart\in P^{(2)}(2,2)$ and $\IdOneCrossTwopart\in P^{(2)}(2,2)$.
\item[(iv)] For $1\leq i<j\leq m$ two points in the same block of $\pi$, the partition given by $\FourOneTwopart$ on the levels $i$ and $j$ and $\idpart$ on the others. For $m=2$ and $\pi=\paarpart$ this amounts to $\FourOneTwopart\in P^{(2)}(1,1)$.
\end{itemize}
\end{thm}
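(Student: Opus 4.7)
\emph{Plan.}
The inclusion $\CC \subseteq \Pm_\pi$ is immediate from Lemma~\ref{LemPiGrading}: the amplified generators $\idpart^{(m)}, \paarpart^{(m)}, \singleton^{(m)}$ are trivially $\pi$-graded, (ii) and (iii) have all blocks confined to a single level, and (iv) mixes only two levels that lie in a common block of $\pi$ by assumption, so the whole category they generate stays $\pi$-graded.

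For the reverse inclusion $\Pm_\pi \subseteq \CC$, I would argue in three stages. \emph{Stage 1 (amplification of $P$).} Recall from \cite{Web} that $P = \langle \crosspart, \vierpartrot, \singleton\rangle$ with base partitions $\idpart$ and $\paarpart$. The amplified singleton $\singleton^{(m)}$ is exactly generator~(i); the amplified crossing $\crosspart^{(m)}$ is the ordered composition of the $m$ partitions in~(iii), since each applies $\crosspart$ on its distinguished level while the identities on the other levels transport unchanged through the composition; the ordered composition of the $m$ partitions in~(ii) likewise yields $\vierpartrot^{(m)}$. Together with Lemma~\ref{LemAmplif} this gives $[P]^{(m)} \subseteq \CC$.

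\emph{Stage 2 (level-respecting partitions).} To produce structures that vary from level to level, I exploit that (ii) and (iii) for a fixed level $i$ act nontrivially only on level $i$ and as identity on the others; iterating them together with $\idpart^{(m)}$ and $\paarpart^{(m)}$ realizes any partition of $\langle\crosspart,\vierpartrot\rangle \subseteq P$ on a single prescribed level, while keeping a rigid identity/pair scaffold on the remaining levels. To inject singletons on an \emph{individual} level I use the following absorption trick: composing (ii) for level $i$ with $\singleton^{(m)}\otimes\singleton^{(m)}$ merges the two level-$i$ singletons into the level-$i$ four-block, which then disappears when the middle row is erased, leaving a single pair on level $i$ and two isolated singletons on every other level. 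Rotating, tensoring, and combining these primitives with the generators above produces every level-respecting $\pi$-graded partition in $\CC$.

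\emph{Stage 3 (genuinely three-dimensional partitions).} To build a block $B$ of a $\pi$-graded partition $p$ that spans several levels $i_1,\ldots,i_r$ lying in a common block of $\pi$, I first use Stage~2 to realize the level-refinement of $p$ in which $B$ is split into its single-level pieces sharing a common column, and then fuse these pieces pairwise by composing with (iv) (appropriately tensored with $\idpart^{(m)}$ and sandwiched between crossings from Stage~1, so that the correct pair of levels and columns aligns with the $\FourOneTwopart$-pattern). Iterating this over all multi-level blocks of $p$ produces $p\in\CC$. The main obstacle is precisely this third stage: one must arrange the tensor, rotation, and crossing scaffold so that each application of (iv) merges \emph{exactly} the intended pair of pieces and nothing more, without creating spurious connections with neighbouring blocks. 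This bookkeeping is the three-dimensional analogue of the classical single-level reduction of general block-mergers to repeated applications of $\vierpartrot$, and is carried out by the familiar recipe of using crossings to bring the target columns and levels into an adjacent pattern, applying (iv) locally, and then undoing the crossings.
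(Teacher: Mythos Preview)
Your three–stage strategy matches the paper's: first build all level-respecting partitions, then fuse across levels with~(iv). Stage~1 is fine. The substantive gap is in Stage~2.

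Your absorption trick is stated backwards relative to its use: composing~(ii) for level~$i$ with $\singleton^{(m)}\otimes\singleton^{(m)}$ yields $\paarpart$ on level~$i$ and $\singleton\otimes\singleton$ on \emph{every other} level, so it does not ``inject singletons on an individual level'' but rather on all levels except~$i$. More importantly, even granting this primitive, the sentence ``rotating, tensoring, and combining these primitives\ldots produces every level-respecting $\pi$-graded partition'' is where the actual work hides, and you have not done it: you still need to explain how to decouple the levels so that level~$1$ carries an arbitrary $p_1$ while level~$2$ simultaneously carries an arbitrary $p_2$. The paper's mechanism here is different and clean: build some partition with $p_1$ on level~$1$ (and uncontrolled garbage on level~$2$), build another with $q_2$ on level~$2$ (garbage on level~$1$), tensor them, use the level-wise crossings~(iii) to permute the columns so that the result splits as $r\otimes s$ with $r=\binom{q_2}{p_1}$, and finally kill $s$ by composing with copies of $(\singleton^{(m)})^*$. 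This tensor--permute--kill step is the crux of the argument and is absent from your outline.

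For Stage~3 you correctly flag the difficulty, but your ``bring adjacent, apply~(iv), undo crossings'' sketch misses one genuinely delicate case: connecting an \emph{upper} point on one level to a \emph{lower} point on another. The partition~(iv) lives in $P^{(m)}(1,1)$ and by itself only merges points in the same horizontal plane. The paper handles the upper--to--lower case via an explicit auxiliary construction: it builds $v\in P^{(2)}(1,2)$ (three-block on level~$1$, $\singleton\otimes\idpart$ on level~$2$) and a dual $w\in P^{(2)}(2,1)$, then sandwiches $\FourOneTwopart\otimes r$ between $v\otimes(\idpart^{(2)})^{\otimes(k-1)}$ and $w\otimes(\idpart^{(2)})^{\otimes(l-1)}$. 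This composite is exactly $r$ with the desired cross-level, cross-plane string added. Your Stage~3 would need an analogous concrete step to be complete.
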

\begin{proof}
We give a proof for $m=2$ and $\pi=\paarpart$, the general case being a straightforward adaption.

Let $\CC\subset P^{(2)}$ be the category generated by (i) to (iv). Let $p_1$ and $q_2$ be partitions in $P(k,l)$. Using $\FourOneIdTwopart$,  $\SingOneSingTwopart$, $\CrossOneIdTwopart$, the base partitions, and the category operations, we may construct a partition $p\in \CC$ respecting the levels, such that on level one, we have $p_1$ (since $P=\langle\vierpartrot,\singleton,\crosspart\rangle$). Likewise, we produce a partition $q\in \CC$ respecting the levels, such that on  level two, we have $q_2$. Using (iii), we may permute the points of $p\otimes q\in \CC$ in order to obtain a partition $r\otimes s\in\CC$ respecting the levels with $r,s\in P^{(2)}(k,l)$ such that $r$ restricts to $p_1$ on  level one and to $q_2$ on  level two. Composing this partition with $\singleton^{(2)}$ and its adjoint, we infer $r\in\CC$.

Use $\FourOneTwopart$ and (iii) to connect arbitrary upper points of $p_1$ with arbitrary upper points of $q_2$, and likewise for connecting lower points with lower points. As for building a string between an upper point of $p_1$ and a lower point of $q_2$, assume that both are leftmost within their  level (possibly using (iii)). Let $v\in P^{(2)}(1,2)$ be the partition consisting of a three block on level one and $\singleton\otimes \idpart$ on  level two. Let $w\in P^{(2)}(2,1)$ be the partition consisting of $\downsingleton\otimes\idpart$ on  level one and a three block on  level two. By the preceding considerations, $v$ and $w$ are in $\CC$. We conclude that the partition $r':=(w\otimes (\idpart^{(2)})^{\otimes l-1})(\FourOneTwopart\otimes r)(v\otimes (\idpart^{(2)})^{\otimes k-1})$ is in $\CC$. 
\setlength{\unitlength}{0.5cm}
\begin{center}
\begin{picture}(28,8)
\newsavebox{\boxGrEinsa}
   \savebox{\boxGrEinsa}
   { \begin{picture}(6,2)
   \put(-0.3,0){\usebox{\boxGrLL}}
   \put(1.7,0){\usebox{\boxGrLL}}
   \put(2.7,0){\usebox{\boxGrLL}}
   \put(3.9,0.3){$\cdots$}
   \put(5.7,0){\usebox{\boxGrLL}}
      \end{picture}}
\newsavebox{\boxGrEinsb}
   \savebox{\boxGrEinsb}
   { \begin{picture}(6,2)
   \put(-0.3,0){\usebox{\boxGrLL}}
   \put(0.7,0){\usebox{\boxGrLLsg}}
   \put(1.7,0){\usebox{\boxGrLLgg}}
   \put(2.7,0){\usebox{\boxGrLLgg}}
   \put(3.9,0.3){$\cdots$}
   \put(5.7,0){\usebox{\boxGrLLgg}}
      \end{picture}}
\newsavebox{\boxGrZweia}
   \savebox{\boxGrZweia}
   { \begin{picture}(8,2)
   \put(-0.3,0){\usebox{\boxGrLL}}
   \put(1.7,0){\usebox{\boxGrLL}}
   \put(2.7,0){\usebox{\boxGrLL}}
   \put(4.9,0.3){$\cdots$}
   \put(7.7,0){\usebox{\boxGrLL}}
      \end{picture}}       
\newsavebox{\boxGrZweib}
   \savebox{\boxGrZweib}
   { \begin{picture}(8,2)
   \put(-0.3,0){\usebox{\boxGrLL}}
   \put(0.7,0){\usebox{\boxGrLLgs}}
   \put(1.7,0){\usebox{\boxGrLLgg}}
   \put(2.7,0){\usebox{\boxGrLLgg}}
   \put(4.9,0.3){$\cdots$}
   \put(7.7,0){\usebox{\boxGrLLgg}}
      \end{picture}}      
\newsavebox{\boxIdid}
   \savebox{\boxIdid}
   { \begin{picture}(1,3)
   \put(-0.3,0.2){\line(0,1){2}}
   \put(0.3,0.9){\line(0,1){2}}
      \end{picture}}      
\newsavebox{\boxS}
   \savebox{\boxS}
   { \begin{picture}(2,2)
   \put(-0.3,0.2){\line(0,1){0.4}}
   \put(0.3,0.9){\line(0,1){0.4}}
      \end{picture}}        
\newsavebox{\boxVPs}
   \savebox{\boxVPs}
   { \begin{picture}(2,2)
   \put(-0.3,0){\usebox{\boxS}}
   \put(-0.3,1.6){\usebox{\boxS}}
   \put(-0.3,0.6){\line(5,6){0.6}}
   \put(-0.3,1.8){\line(5,6){0.6}}
   \put(0,0.95){\line(0,1){1.2}}
      \end{picture}}     
\put(0,3){$r'=$}
\put(4,6){\usebox{\boxGrEinsa}}
\put(4,4){\usebox{\boxGrEinsb}}
\put(6,4){\usebox{\boxIdid}}
\put(7,4){\usebox{\boxIdid}}
\put(10,4){\usebox{\boxIdid}}
\put(4,4.2){\line(0,1){2}}
\put(5,4.2){\line(0,1){0.4}}
\put(4,4.6){\line(1,0){1}}
\put(4.6,4.9){\line(0,1){0.4}}
\put(5.6,4.9){\line(-1,2){1}}
\put(4,2){\usebox{\boxGrZweib}}
\put(4,2){\usebox{\boxVPs}}
\put(8,3.5){{\color{gray}$r$}}
\put(8.05,3.5){{\color{gray}$r$}}
\put(4,0){\usebox{\boxGrZweia}}
\put(6,0){\usebox{\boxIdid}}
\put(7,0){\usebox{\boxIdid}}
\put(12,0){\usebox{\boxIdid}}
\put(4,0.2){\line(1,2){1}}
\put(4,2.2){\line(0,-1){0.4}}
\put(4.6,0.9){\line(0,1){2}}
\put(5.6,2.9){\line(0,-1){0.4}}
\put(4.6,2.5){\line(1,0){1}}
\put(15,1){$(w\otimes (\idpart^{(2)})^{\otimes l-1})$}
\put(15,3){$(\FourOneTwopart\otimes r)$}
\put(15,5){$(v\otimes (\idpart^{(2)})^{\otimes k-1})$}
\put(21,3.5){$r$ acts on all black}
\put(21,2.5){and all gray points}
\end{picture}
\end{center}
It coincides with the partition obtained when connecting the leftmost upper point of $r$ on level one with the leftmost lower point of $r$ on level two (both points are marked in black in the above picture).
We infer that we may connect arbitrary blocks of $p_1$ with arbitrary blocks of $q_2$, such that we may construct any partition $p\in P^{(2)}$ in $\CC$.
\end{proof}

\begin{cor}\label{CorGenerators}
For $m=2$ and $\pi=\paarpart$, the category $P^{(2)}$ is generated by the following partitions besides the base partitions $\idpart^{(2)}$ and $\paarpart^{(2)}$:
\begin{itemize}
\item[(i)] $\singleton^{(2)}\in P^{(2)}(0,1)$,
\item[(ii)] $\FourOneIdTwopart\in P^{(2)}(2,2)$ and $\IdOneFourTwopart\in P^{(2)}(2,2)$,
\item[(iii)]  $\CrossOneIdTwopart\in P^{(2)}(2,2)$ and $\IdOneCrossTwopart\in P^{(2)}(2,2)$,
\item[(iv)] $\FourOneTwopart\in P^{(2)}(1,1)$.
\end{itemize}
Moreover, we can replace the partition of item (iv) by 
\begin{itemize}
\item[(iv')] $\PaarOneTwopart\in P^{(2)}(0,1)$.
\end{itemize}
\end{cor}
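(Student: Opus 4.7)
My plan is to split the proof into two parts matching the statement. The first claim---that (i)--(iv) together with the base partitions generate $P^{(2)}$---is immediate from Theorem \ref{PropGenerators} specialised to $m=2$ and $\pi=\paarpart$, since the one-block grading partition on two points makes every spatial partition $\pi$-graded (so $P^{(2)}_{\pi}=P^{(2)}$), and case (iv) of that theorem for $i=1$, $j=2$ is exactly $\FourOneTwopart$.

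For the ``Moreover'' statement I would show the two candidate generating sets produce the same category by constructing $\FourOneTwopart$ from $\PaarOneTwopart$ (modulo the remaining generators) and, conversely, $\PaarOneTwopart$ from $\FourOneTwopart$. To produce $\FourOneTwopart$ from $\PaarOneTwopart$ I plan to form the composition $(\PaarOneTwopart^{*}\otimes\PaarOneTwopart^{*})\,\FourOneIdTwopart\in P^{(2)}(2,0)$: the level-$1$ four-block of $\FourOneIdTwopart$ fuses the two middle level-$1$ points with the two upper level-$1$ points; the two identities of $\FourOneIdTwopart$ on level $2$ carry the middle level-$2$ points up to the two upper level-$2$ points; and the two cross-level pairs coming from $\PaarOneTwopart^{*}\otimes\PaarOneTwopart^{*}$ bridge level $1$ and level $2$ in each of the two middle rows. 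Chasing blocks through these identifications, all four upper points end up in a single block, and a single $m$-rotation of one upper row down to the lower plane converts this partition in $P^{(2)}(2,0)$ into $\FourOneTwopart\in P^{(2)}(1,1)$.

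For the converse direction I would compose $\downsingleton^{(2)}=(\singleton^{(2)})^{*}$ below $\FourOneTwopart$. The two singletons cap the two lower points of $\FourOneTwopart$ at the (now-erased) middle plane, but the single block of $\FourOneTwopart$ remains connected to the upper plane through these middle points, so the composition sits in $P^{(2)}(1,0)$ with its two upper points in a single cross-level block; an $m$-rotation of that row to the lower plane then yields $\PaarOneTwopart$. Together with the first part, this shows $\langle(\mathrm{i}),(\mathrm{ii}),(\mathrm{iii}),(\mathrm{iv}'),\idpart^{(2)},\paarpart^{(2)}\rangle=P^{(2)}$.

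The only non-routine point I foresee is the block-chasing in the first direction: one has to verify that the two middle-plane cross-level pairs, the level-$1$ four-block, and the two level-$2$ identities genuinely chain into a single block after composition. This is short---essentially a table of which middle points lie in which block before merging---but it is the one place where a miscount could occur.
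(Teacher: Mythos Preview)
Your argument is correct. The first part is identical to the paper's: specialise Theorem~\ref{PropGenerators} to $m=2$, $\pi=\paarpart$.

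For the ``Moreover'' clause your route differs from the paper's. You build $\FourOneTwopart$ via the short composition $(\PaarOneTwopart^{*}\otimes\PaarOneTwopart^{*})\,\FourOneIdTwopart$ followed by one $m$-rotation; the block-chase you outline is correct and indeed all four upper points merge into a single block. The paper instead exhibits a longer stacked composition (roughly five layers, using $\FourOneIdTwopart$, $\PaarOneTwopart$, its adjoint, and identities) displayed as a picture. Your construction is more economical and uses only generators from (ii) and (iv$'$), so it is arguably cleaner. Note, however, that your converse direction (producing $\PaarOneTwopart$ from $\FourOneTwopart$) is unnecessary: once (i)--(iv) are known to generate $P^{(2)}$, the partition $\PaarOneTwopart\in P^{(2)}$ automatically lies in $\langle(\mathrm{i}),(\mathrm{ii}),(\mathrm{iii}),(\mathrm{iv})\rangle$, so only the forward construction is needed to establish the equivalence of the two generating sets. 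The paper accordingly proves only that one direction.
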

\begin{proof}
By Theorem \ref{PropGenerators}, the category $P^{(2)}$ is generated by (i-iv). Thus, all we have to prove is that $\FourOneTwopart$ is in the category generated by (i-iii) and (iv'), which is the case as can be seen by the following picture.
\setlength{\unitlength}{0.5cm}
\begin{center}
\begin{picture}(8,12)
\newsavebox{\boxVierer}
   \savebox{\boxVierer}
   { \begin{picture}(2,2)
   \put(-0.3,0){\usebox{\boxGrLL}}
   \put(0.7,0){\usebox{\boxGrLL}}
      \end{picture}}
   \savebox{\boxIdid}
   { \begin{picture}(1,3)
   \put(-0.3,0.2){\line(0,1){2}}
   \put(0.3,0.9){\line(0,1){2}}
      \end{picture}}      
\newsavebox{\boxIdidd}
   \savebox{\boxIdidd}
   { \begin{picture}(1,3)
   \put(0.3,0.9){\line(0,1){2}}
   \put(1.3,0.9){\line(0,1){2}}   
      \end{picture}}      
\newsavebox{\boxVP}
   \savebox{\boxVP}
   { \begin{picture}(2,2)
   \put(-0.3,0.2){\line(0,1){0.4}}
   \put(0.7,0.2){\line(0,1){0.4}}
   \put(-0.3,0.6){\line(1,0){1}}   
   \put(0.1,0.6){\line(0,1){1.2}}
   \put(-0.3,1.8){\line(0,1){0.4}}
   \put(0.7,1.8){\line(0,1){0.4}}
   \put(-0.3,1.8){\line(1,0){1}}   
      \end{picture}}     
   \savebox{\boxS}
   { \begin{picture}(2,2)
\put(-0.3,0.2){\line(0,1){0.4}}
\put(0.3,0.9){\line(0,1){0.4}}
      \end{picture}}     
\put(4,0){\usebox{\boxGrLL}}
\put(4,0){\usebox{\boxIdid}}
\put(5,1.6){\usebox{\boxS}}
\put(4,2){\usebox{\boxVierer}}
\put(4,2){\usebox{\boxIdidd}}
\put(4,2){\usebox{\boxVP}}
\put(4,4){\usebox{\boxVierer}}
\put(4,4){\usebox{\boxS}}
\put(4,5.6){\usebox{\boxS}}
\put(4,4.6){\line(5,6){0.6}}
\put(4,5.8){\line(5,6){0.6}}
\put(5,4){\usebox{\boxIdid}}
\put(4,6){\usebox{\boxVierer}}
\put(4,6){\usebox{\boxVP}}
\put(4,6){\usebox{\boxIdidd}}
\put(4,8){\usebox{\boxVierer}}
\put(4,8){\usebox{\boxIdid}}
\put(5,8){\usebox{\boxS}}
\put(4,10){\usebox{\boxGrLL}}
\put(0,4){\usebox{\boxGrLL}}
\put(0,6){\usebox{\boxGrLL}}
\put(0,4){\usebox{\boxS}}
\put(0,5.6){\usebox{\boxS}}
\put(0,4.6){\line(5,6){0.6}}
\put(0,5.8){\line(5,6){0.6}}
\put(0.3,4.95){\line(0,1){1.2}}
\put(2,5.5){$=$}
\end{picture}
\end{center}
\end{proof}

For $m=1$, we have $P_2=\langle\crosspart\rangle$, see \cite{VSW}. For $m=2$, the situation is more complicated.

\begin{thm}\label{PropPZwei}
For $m=2$ and $\pi=\paarpart$, the category $P_2^{(2)}$ consisting of all spatial pair partitions on two levels (see also Example \ref{ExCateg}(b)) is generated by the following partitions besides the base partitions $\idpart^{(2)}$ and $\paarpart^{(2)}$:
\begin{itemize}
\item[(i)] $\PaarPaarOneIdTwopart\in P^{(2)}(2,2)$ and $\IdOnePaarPaarTwopart\in P^{(2)}(2,2)$,
\item[(ii)]  $\CrossOneIdTwopart\in P^{(2)}(2,2)$ and $\IdOneCrossTwopart\in P^{(2)}(2,2)$,
\item[(iii)] $\PaarOneTwopart\in P^{(2)}(0,1)$.
\end{itemize}
\end{thm}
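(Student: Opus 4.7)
Let $\CC \subseteq P_2^{(2)}$ denote the category generated by $\idpart^{(2)}, \paarpart^{(2)}$ and the partitions in (i)--(iii). The inclusion $\CC \subseteq P_2^{(2)}$ will be immediate, since each generator is a pair partition and the category operations preserve that property. For the converse, take $p \in P_2^{(2)}(k,l)$. The plan is first to invoke closure under $2$-rotation to reduce to the case of a partition with no upper points, i.e.\ $p \in P_2^{(2)}(0,N)$ with $N=k+l$. Each block of $p$ is then a pair of lower points, which is either intra-level-$1$, intra-level-$2$ or inter-level; a quick count along each level shows that the intra-level-$1$ and intra-level-$2$ counts coincide, call them $b$, and that the number of inter-level pairs is $a = N - 2b$.

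I would next build a \emph{standard} representative of this type inside $\CC$, namely
\[
q := (\paarpart^{(2)})^{\otimes b} \otimes (\PaarOneTwopart)^{\otimes a} \;\in\; \CC\cap P_2^{(2)}(0,N).
\]
By construction $q$ has the same pair-type counts as $p$, but placed in canonical position: the consecutive pairs $\{2j-1,2j\}$ on each level for $j=1,\ldots,b$, together with inter-level pairs of the form $(2b+i \text{ on level } 1,\;2b+i \text{ on level } 2)$. I would then establish that $\CC$ contains every partition implementing an arbitrary permutation on one level while acting as the identity on the other: the adjacent transposition of two consecutive level-$1$ points is realized as $(\idpart^{(2)})^{\otimes(i-1)} \otimes \CrossOneIdTwopart \otimes (\idpart^{(2)})^{\otimes(N-i-1)}$, and since adjacent transpositions generate $S_N$, any permutation $\pi_1 \in S_N$ acting only on level~$1$ is in $\CC$; the symmetric statement for level~$2$ uses $\IdOneCrossTwopart$.

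Finally, the plan is to choose $\pi_1,\pi_2 \in S_N$ so that composing the corresponding permutation partitions below $q$ yields exactly $p$: pick $\pi_1$ so that the consecutive pairs $\{\pi_1(2j-1),\pi_1(2j)\}_{j=1}^b$ enumerate the intra-level-$1$ pairs of $p$ and so that $\pi_1(2b+1),\ldots,\pi_1(N)$ enumerate the level-$1$ endpoints of the inter-level pairs of $p$ (in any chosen order); then define $\pi_2$ analogously on $\{1,\ldots,2b\}$ for the intra-level-$2$ pairs of $p$ and set $\pi_2(2b+i)$ to be the level-$2$ partner in $p$ of $\pi_1(2b+i)$. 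This matches every block and so proves $p \in \CC$. I expect the main obstacle to be precisely this combinatorial bookkeeping, namely verifying that the free choices of $\pi_1$ and $\pi_2$ can always be coordinated to reproduce $p$ exactly; by contrast the generators $\PaarPaarOneIdTwopart$ and $\IdOnePaarPaarTwopart$ of (i) do not appear explicitly in this argument, as they are recoverable from $(\paarpart^{(2)})^{\otimes 2}$ together with the level-crossings after rotation, and are presumably retained in the statement to mirror the generator list of Theorem \ref{PropGenerators}.
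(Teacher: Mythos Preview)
Your argument is correct and takes a genuinely different route from the paper's proof. The paper first uses the generators in (i) together with $\CrossOneIdTwopart$ and the base partitions to build, for every $p\in P_2(0,k)$, the spatial partition carrying $pp^*$ on level one and $\idpart^{\otimes k}$ on level two; from such blocks (and their level-two analogues) it assembles all level-respecting spatial pair partitions, and finally mixes the levels via the partition $\CrossOneTwopart$, which it obtains from (ii) and (iii). Your approach bypasses the $pp^*$-construction entirely: after rotating to $P_2^{(2)}(0,N)$ you write down a single canonical representative $q=(\paarpart^{(2)})^{\otimes b}\otimes(\PaarOneTwopart)^{\otimes a}$ of the correct ``type'' and then reach $p$ by composing with two independent level-wise permutations manufactured from $\CrossOneIdTwopart$ and $\IdOneCrossTwopart$. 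The bookkeeping you flag as the potential obstacle is indeed harmless: once $\pi_1$ is fixed, the values $\pi_2(2b+i)$ are forced to be the level-two partners of $\pi_1(2b+i)$, and these are automatically distinct and disjoint from $\pi_2(\{1,\ldots,2b\})$ because the inter-level blocks of $p$ induce a bijection between the complementary subsets of the two levels.

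What your approach buys is a strictly sharper statement: since your argument never invokes the generators in (i), it actually shows that $\PaarPaarOneIdTwopart$ and $\IdOnePaarPaarTwopart$ are redundant and that $P_2^{(2)}$ is already generated by $\idpart^{(2)}$, $\paarpart^{(2)}$, the two level-wise crossings of (ii), and $\PaarOneTwopart$. The paper's proof, by contrast, does use (i) in an essential way to produce the ``cap--cup on one level, identity on the other'' building blocks, so your redundancy observation is new relative to the paper's argument (though of course consistent with it).
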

\begin{proof}
Similar to the proof of Theorem \ref{PropGenerators}, we use $\PaarPaarOneIdTwopart$, $\CrossOneIdTwopart$ and the base partitions in order to construct arbitrary pair partitions $pp^*\in P_2(k,k)$ (where $p\in P_2(0,k)$) on level one. Tensoring several such partitions and several copies of $\idpart^{(2)}$, using (ii) to permute the points and finally composing them with suitable tensor powers of $\paarpart^{(2)}$ and its adjoint, we obtain any arbitrary partition in $P_2^{(2)}$ respecting the levels. We may mix the levels using the partition $\CrossOneTwopart$ which may be constructed from (ii) and (iii).
\end{proof}

\section{Spatial partition quantum groups}

We will now associate quantum groups to categories of spatial partitions. We first recall some basics about compact matrix quantum groups and Woronowicz's Tannaka-Krein result.

\subsection{Compact matrix quantum groups}

The following definition of a compact matrix quantum group is due to Woronowicz \cite{WoCMQG, WoRemark}. It is a special case of his theory of compact quantum groups. See also \cite{Tim, Nesh} for more details.

\begin{defn}\label{DefCMQG}
Let $n\in\N$. 
A \emph{compact matrix quantum group} is a tupel  $(A,u)$ such that
\begin{itemize}
\item $A$ is a unital $C^*$-algebra generated by $n^2$ elements $u_{ij}$, $1\leq i,j\leq n$,
\item the matrices $u=(u_{ij})$ and $\bar u=(u_{ij}^*)$ are invertible in $M_{n}(A)$,
\item and the map $\Delta:A\to A\otimes_{\min}A$ given by $\Delta(u_{ij})=\sum_k u_{ik}\otimes u_{kj}$ is a $^*$-homomorphism.
\end{itemize}
\end{defn}

If $G\subset M_n(\C)$ is a compact matrix group, then $C(G)$ gives rise to a compact matrix quantum group in the above sense. We therefore write $A=C(G)$ even if the $C^*$-algebra $A$ from Definition \ref{DefCMQG} is noncommutative, and we speak of $G$ as the compact matrix quantum group (which is only defined via $C(G)$), sometimes specifying $(G,u)$ in order to keep track of the generating matrix $u$. 

\begin{defn}\label{DefSubgroup}
Let $(G,u)$ with $u=(u_{ij})_{i,j=1,\ldots,n}$ and $(H,v)$ with $v=(v_{ij})_{i,j=1,\ldots,m}$ be two compact matrix quantum groups.
\begin{itemize}
\item[(a)] We say that $G$ is a \emph{quantum subgroup of $H$ as a compact matrix quantum group}, if there is a surjective $^*$-homomorphism $\phi:C(H)\to C(G)$ mapping $\phi(v_{ij})=u_{ij}$. In particular, we require $n=m$.
\item[(b)] We say that $G$ is a \emph{quantum subgroup of $H$ as a compact quantum group}, if there is a surjective $^*$-homomorphism $\phi:C(H)\to C(G)$ such that $\Delta_G(\phi(v_{ij}))=\sum_{k=1}^m\phi(v_{ik})\otimes\phi(v_{kj})$. In general, we may have $n\neq m$.
\end{itemize}
We  simply speak of a \emph{quantum subgroup}  $G\subset H$ if there is no confusion with the above cases (a) and (b); in particular, since we will always apply throughout the article (a) in the case $n=m$ and (b) in the case $n\neq m$.
\end{defn}

\begin{ex}\label{ExCMQG}
\begin{itemize}
\item[(a)] Wang \cite{WangOrth} defined the \emph{free orthogonal quantum group} $O_n^+$ as the compact matrix quantum group given by the universal $C^*$-algebra
\[C(O_n^+):=C^*(u_{ij}, i,j=1,\ldots,n\;|\; u_{ij}=u_{ij}^*, \sum_k u_{ik}u_{jk}=\sum_k u_{ki}u_{kj}=\delta_{ij}).\]
If we take the quotient of $C(O_n^+)$ by the relations that all $u_{ij}$ commute, we obtain the algebra of functions $C(O_n)$ over the orthogonal group $O_n\subset M_n(\C)$. Thus, we have $O_n\subset O_n^+$ as quantum subgroups in the sense of Definition \ref{DefSubgroup}(a).
\item[(b)] Wang \cite{WangSym} also defined the \emph{free symmetric quantum group} $S_n^+$ via
\[C(S_n^+):=C^*(u_{ij}, i,j=1,\ldots,n\;|\; u_{ij}=u_{ij}^*=u_{ij}^2, \sum_k u_{ik}=\sum_k u_{kj}=1).\]
The quotient by the commutator ideal yields $C(S_n)$, where $S_n\subset M_n(\C)$ is the symmetric group, thus $S_n\subset S_n^+$. It is not difficult to check, that we have $u_{ik}u_{jk}=0$ and $u_{ki}u_{kj}=0$ for $i\neq j$. Hence $S_n^+$ is a quantum subgroup of $O_n^+$ in the sense of Definition \ref{DefSubgroup}(a).
\item[(c)] We may view the symmetric group $S_n$ as a quantum subgroup of $O_{n^2}^+$ in the sense of Definition \ref{DefSubgroup}(b). Indeed, let $u_{ij}$, $i,j=1,\ldots,n$ be the generators of $C(S_n)$. For $i_1,i_2,j_1,j_2\in\{1,\ldots,n\}$, we put
\[v'_{(i_1,i_2)(j_1,j_2)}:=u_{i_1j_1}u_{i_2j_2}\in C(S_n).\]
Labeling the generators of $C(O_{n^2}^+)$ by $v_{(i_1,i_2)(j_1,j_2)}$, it is easy to verify that we have a surjection from $C(O_{n^2}^+)$ to $C(S_n)$ mapping $v_{(i_1,i_2)(j_1,j_2)}$ to $v'_{(i_1,i_2)(j_1,j_2)}$. It respects the comultiplication map $\Delta$ of $S_n$.
\end{itemize}
\end{ex}

\subsection{Tannaka-Krein duality}

Similar to Schur-Weyl duality for groups, we may reconstruct a compact matrix quantum group from its intertwiner spaces. This is due to Woronowicz's Tannaka-Krein result \cite{WoTK}. Let us briefly sketch it, referring to \cite{TW, Nesh, Mal} for more details. We restrict to the case $u=\bar u$.

For $k\in\N_0$, the matrix
\[u^{\otimes k}\in M_{n^k}(C(G))\cong M_n(\C)\otimes\cdots \otimes M_n(\C)\otimes C(G)\]
is a \emph{(tensor) representation} of a compact matrix quantum group $(G,u)$.  The set of \emph{intertwiners} between $u^{\otimes k}$ and $u^{\otimes l}$ is the set of linear map $T:(\C^n)^{\otimes k}\to (\C^n)^{\otimes l}$ such that $Tu^{\otimes k}=u^{\otimes l}T$. It is denoted by $\Hom_G(k,l)$. The collection of spaces $(\Hom_G(k,l))_{k,l\in\N_0}$ forms a \emph{$C^*$-tensor category} or rather a \emph{concrete monoidal $W^*$-category} in the sense of Woronowicz. A simplifed version of his Tannaka-Krein Theorem is the following.

\begin{thm}[{Tannaka-Krein Theorem \cite{WoTK}}]
Let $(\Hom(k,l))_{k,l\in\N_0}$ be an (abstract) $C^*$-tensor category which is generated by an element $f=\bar f$. Then, there exists a compact matrix quantum group $(G,u)$ with $u=\bar{u}$ such that $\Hom_G(k,l)=\Hom(k,l)$ for all $k,l\in\N_0$. It is universal in the sense that whenever $(H,v)$ is a compact matrix quantum group such that $Tv^{\otimes k}=v^{\otimes l}T$ for all $T\in\Hom(k,l)$ and all $k,l$, then $H$ is a quantum subgroup of $G$.
\end{thm}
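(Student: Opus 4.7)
The plan is to construct $(G,u)$ as the universal compact matrix quantum group whose fundamental representation $u$ satisfies exactly the intertwiner relations encoded in $(\Hom(k,l))_{k,l}$, and then to verify that no additional intertwiners appear. Let $n$ be the dimension of the generating object, so that each $T\in\Hom(k,l)$ is a linear map $(\C^n)^{\otimes k}\to(\C^n)^{\otimes l}$. First, I would define $C(G)$ as the universal unital $C^*$-algebra generated by $n^2$ elements $u_{ij}$, $1\le i,j\le n$, subject to
\[
T\,u^{\otimes k}=u^{\otimes l}\,T\qquad \textnormal{for every } T\in\Hom(k,l),\ k,l\in\N_0.
\]
The self-duality hypothesis $f=\bar f$ supplies a duality morphism $R\in\Hom(0,2)$ and its adjoint $R^*\in\Hom(2,0)$; applied to $u^{\otimes 2}$ the resulting relations force $u=\bar u$ together with $uu^t=u^tu=I_n$, so every relation is automatically bounded in any $C^*$-representation and the universal $C^*$-seminorm is in fact a norm.

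Second, I would check that $\Delta(u_{ij}):=\sum_k u_{ik}\otimes u_{kj}$ extends to a $^*$-homomorphism $C(G)\to C(G)\otimes_{\min} C(G)$. This reduces to verifying that the new matrix $\Delta(u)$ still satisfies every intertwiner relation, which follows because $(\Hom(k,l))_{k,l}$ is closed under tensor product (so $T$ intertwines $(u\otimes u)^{\otimes k}$ as soon as it intertwines $u^{\otimes k}$). Hence $(G,u)$ is a compact matrix quantum group in the sense of Definition~\ref{DefCMQG}, and by the very definition of the relations, $\Hom(k,l)\subset\Hom_G(k,l)$ for every $k,l$. The universal property is then automatic: for any $(H,v)$ with $Tv^{\otimes k}=v^{\otimes l}T$ for all $T$, the generators $v_{ij}\in C(H)$ satisfy the relations defining $C(G)$, so the assignment $u_{ij}\mapsto v_{ij}$ extends to a surjective $^*$-homomorphism $C(G)\to C(H)$, exhibiting $H$ as a quantum subgroup of $G$.

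The principal obstacle is the reverse inclusion $\Hom_G(k,l)\subset\Hom(k,l)$, which I would handle via a Peter--Weyl / Tannakian reconstruction argument. The strategy is to pass to the dense Hopf $^*$-subalgebra $\mathcal{O}(G)\subset C(G)$ spanned by the matrix coefficients of the tensor powers $u^{\otimes k}$, $k\in\N_0$, and to show that $\mathcal{O}(G)$ has exactly the size predicted by the abstract category. Since $(\Hom(k,l))_{k,l}$ is a concrete $C^*$-tensor category it is semisimple, and after passing to its Karoubian envelope each $u^{\otimes k}$ decomposes into irreducible subobjects; these produce pairwise orthogonal isotypic subspaces of $\mathcal{O}(G)$ under the Haar state of $G$ (whose existence is guaranteed by Woronowicz's general theory, since $C(G)$ is a compact matrix quantum group). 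A dimension count on both sides then yields $\dim\Hom_G(k,l)=\dim\Hom(k,l)$, forcing equality of the two spaces. This bookkeeping --- ensuring that no superfluous matrix coefficients appear beyond those dictated by the abstract category --- is the technical heart of the theorem, and I would follow either Woronowicz's original argument in \cite{WoTK} or the streamlined Tannakian reconstruction of the underlying Hopf algebra treated in the literature (e.g.\ \cite{Nesh, Mal}).
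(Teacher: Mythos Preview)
The paper does not prove this theorem at all: it is stated as a citation of Woronowicz's result \cite{WoTK} (with pointers to \cite{TW, Nesh, Mal} for details) and is used as a black box throughout. So there is no ``paper's own proof'' to compare against.

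Your sketch is a reasonable outline of the standard reconstruction argument and, indeed, you explicitly defer the hard step (the reverse inclusion $\Hom_G(k,l)\subset\Hom(k,l)$) to the literature you cite. Two small remarks. First, your justification that $\Delta$ extends is phrased slightly oddly: the point is simply that if $T u^{\otimes k}=u^{\otimes l}T$ holds in $C(G)$, then applying it in each tensor leg shows the matrix $(\sum_k u_{ik}\otimes u_{kj})$ also satisfies the same relation; closure of the category under tensor product is not what is being used there. Second, the sentence ``every relation is automatically bounded'' glosses over a genuine issue (one must check that the universal $C^*$-algebra is nonzero, i.e.\ that the relations admit a nontrivial Hilbert-space representation), though this is handled in the references you name. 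Since the paper treats the theorem as an external input, there is nothing further to compare.
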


We conclude that compact matrix quantum groups are determined by their intertwiner spaces, hence all we need to know about a compact matrix quantum group $(G,u)$ is $(\Hom_G(k,l))_{k,l\in\N_0}$.

\subsection{Linear maps associated to partitions}
\label{SectLinMaps}

Banica and Speicher \cite{BS} associated linear maps to partitions $p\in P$  in order to obtain quantum groups whose intertwiner spaces are of a combinatorial form. 
Let $n\in\N$ and $p\in P(k,l)$. Let $i=(i_1,\ldots,i_k)$ and $j=(j_1,\ldots,j_l)$ be multi indices whose components range in $\{1,\ldots,n\}$. We decorate the $k$ upper points of $p$ from left to right with the entries of $i$, and likewise for the $l$ lower points (from left to right)  using $j$. If the strings of $p$ connect only equal indices, then $\delta_p(i,j):=1$ and $\delta_p(i,j):=0$ otherwise. See \cite[Ex. 4.2]{TW} or \cite{VSW} for examples.

Let $e_1,\ldots, e_n$ be the canonical basis of $\C^n$. 
For $p\in P(k,l)$, we define the linear map $T_p:(\C^n)^{\otimes k}\to(\C^n)^{\otimes l}$ by setting
\[T_p(e_{i_1}\otimes \cdots \otimes e_{i_k}):=\sum_{j_1,\ldots, j_l=1}^n \delta_p(i,j)e_{j_1}\otimes \cdots \otimes e_{j_l}.\]
The convention is to put $(\C^n)^{\otimes 0}=\C$.
The maps $T_p$ behave nicely with respect to the category operations.

\begin{prop}[{\cite{BS}}]\label{PropTpBS}
We have:
\begin{itemize}
\item[(a)] $T_p\otimes T_q=T_{p\otimes q}$,
\item[(b)]  $(T_p)^*=T_{p^*}$,
\item[(c)] $T_qT_p=n^{b(q,p)}T_{qp}$, where $b(q,p)$ is the number of disconnected strings arising in the composition of $p$ and $q$,
\item[(d)] $T_{\idpart}=\id:\C^n\to\C^n$,
\item[(e)] $T_{\paarpart}=\sum_i e_i\otimes e_i\in(\C^n)^{\otimes 2}$.
\end{itemize}
\end{prop}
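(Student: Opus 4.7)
The plan is to verify each of the five identities by direct computation from the definitions of $\delta_p(i,j)$ and $T_p$. Items (d) and (e) are essentially immediate: for (d), one upper point decorated with $i$ and one lower point decorated with $j$ connected by a single string gives $\delta_{\idpart}(i,j)=\delta_{ij}$, so $T_{\idpart}(e_i)=e_i$; for (e), there are no upper points and two lower points connected by one string, so $\delta_{\paarpart}(\emptyset,(j_1,j_2))=\delta_{j_1 j_2}$, giving $T_{\paarpart}(1)=\sum_i e_i\otimes e_i$.

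For (a), I would show the factorization $\delta_{p\otimes q}((i,i'),(j,j'))=\delta_p(i,j)\,\delta_q(i',j')$, which is immediate because the blocks of $p\otimes q$ are exactly the blocks of $p$ together with the blocks of $q$ placed side by side, with no new identifications of indices. Then
\[
(T_p\otimes T_q)(e_i\otimes e_{i'})=\sum_{j,j'}\delta_p(i,j)\delta_q(i',j')\,e_j\otimes e_{j'}=T_{p\otimes q}(e_i\otimes e_{i'}).
\]
For (b), I would compute matrix entries in the canonical basis: $\langle T_p(e_i),e_j\rangle=\delta_p(i,j)$, hence $\langle e_i,(T_p)^*(e_j)\rangle=\delta_p(i,j)=\delta_{p^*}(j,i)$ since reflecting the picture swaps upper and lower points without changing the block structure. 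This gives $(T_p)^*=T_{p^*}$.

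The main step is (c). Let $q\in P(k,r)$ and $p\in P(r,l)$. Composing, one writes $q$ on top of $p$, decorates the top $k$ points with $i$, the bottom $l$ points with $j$, and sums over all middle decorations $s=(s_1,\dots,s_r)\in\{1,\dots,n\}^r$. Directly,
\[
T_p T_q(e_i)=\sum_{s,j}\delta_q(i,s)\,\delta_p(s,j)\,e_j.
\]
The key combinatorial claim is
\[
\sum_{s}\delta_q(i,s)\,\delta_p(s,j)=n^{b(q,p)}\,\delta_{qp}(i,j),
\]
which I would prove as follows. The joint condition $\delta_q(i,s)\delta_p(s,j)=1$ requires that the indices $i,s,j$ are constant on each block of the concatenated (not yet reduced) picture $q$ over $p$. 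The reduced partition $qp$ is obtained from this picture by erasing the middle row together with the connected components that lie entirely in the middle row; these erased components are exactly the $b(q,p)$ disconnected strings. Each non-removed block of the concatenated picture corresponds to a unique block of $qp$, so once $i,j$ are fixed compatibly with $qp$, the values of $s$ on the non-removed blocks are forced, while each of the $b(q,p)$ removed loops contributes an independent free index in $\{1,\dots,n\}$. Summing yields the factor $n^{b(q,p)}$, establishing (c).

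The only point requiring care is bookkeeping in (c): ensuring that every middle index is accounted for either by being forced (by a string connecting it to an upper or lower point, possibly through the rest of the picture) or by belonging to one of the erased loops, and that no double-counting occurs. This is a standard argument but is the one place where a careful case analysis on the block structure of the concatenated picture is needed.
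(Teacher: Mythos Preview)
Your direct verification is correct and is the standard approach; the paper itself does not give a proof of this proposition but simply cites \cite{BS}, so there is nothing to compare against beyond noting that your argument is exactly the expected one. One small bookkeeping remark: in part (c) you set $q\in P(k,r)$ and $p\in P(r,l)$ and then compute $T_pT_q$, which is the statement with the roles of $p$ and $q$ interchanged relative to the proposition as written; this is only a relabeling and does not affect the argument.
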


\subsection{Linear maps associated to spatial partitions}
\label{SectLinMMaps}

We now extend Section \ref{SectLinMaps} to spatial partitions.
Let $m\in\N$ and let $n_1,\ldots,n_m\in\N$. By  $\ker (n_1,\ldots,n_m)$ we denote the unique partition $\pi$ in $P(m)$ with the property that $s$ and $t$ are in the same block of $\pi$ if and only if $n_s=n_t$.  Let $p\in\Pm(k,l)$ be $\ker(n_1,\ldots,n_m)$-graded, i.e.  the strings of  $p$ connect different levels only if the ``dimensions'' $n_i$ of these  levels coincide.
We put
\[[n_1\times\ldots\times n_m]:=\{1,\ldots,n_1\}\times\{1,\ldots,n_2\}\times\ldots\times\{1,\ldots,n_m\}.\]
Let $I$ be a multi index in $[n_1\times\ldots\times n_m]^k$ and $J$ be a multi index in $[n_1\times\ldots\times n_m]^l$. Hence $I$ is of the form
\[I=(I_1,\ldots,I_k)=\left((i_1^1,\ldots,i_m^1),\ldots,(i_1^k,\ldots,i_m^k)\right).\]
 We define $\delta_p(I,J)$ as before, decorating the upper plane of $p$ by $I$ and the lower plane by $J$, i.e. under the identification $\Pm(k,l)\cong P(km,lm)$, we simply apply the former definition of $\delta_p$. We may find a natural orthonormal basis of $\C^{n_1n_2\ldots n_m}$ using the following isomorphism:
\begin{align*}
\C^{n_1}\otimes\ldots\otimes\C^{n_m}&\cong \C^{n_1n_2\ldots n_m}\\
e_{i_1}\otimes\ldots \otimes e_{i_m}&\longleftrightarrow  e_{(i_1,\ldots,i_m)}.
\end{align*}
We assign the following linear map $S_p$ to $p$:
\begin{align*}
S_p:(\C^{n_1n_2\ldots n_m})^{\otimes k}&\to(\C^{n_1n_2\ldots n_m})^{\otimes l}\\
e_{(i_1^1,\ldots,i_m^1)}\otimes\ldots\otimes e_{(i_1^k,\ldots,i_m^k)}&\mapsto\sum_{j_1^1,\ldots,j_m^1,\ldots,j_1^l,\ldots,j_{m}^l}\delta_p(I,J)e_{(j_1^1,\ldots,j_m^1)}\otimes\ldots\otimes e_{(j_1^l,\ldots,j_m^l)}.
\end{align*}
For $m=1$ and $p\in P^{(1)}(k,l)=P(k,l)$, the constructions of $T_p$ and $S_p$ coincide.

\begin{rem}\label{RemKey}
The definition of $S_p$ constitutes the technical key observation of this article. It looks quite simple, but let us discuss it from a different perspective. Observe that the map $T_p:(\C^{n_1\cdots n_m})^{\otimes k}\to(\C^{n_1\cdots n_m})^{\otimes l}$ for $p\in P(k,l)$ coincides with $S_{p^{(m)}}:(\C^{n_1\cdots n_m})^{\otimes k}\to(\C^{n_1\cdots n_m})^{\otimes l}$ for $p^{(m)}\in \Pm(k,l)$. Hence, 
with the maps $S_p$ for general spatial partitions $p\in\Pm$, we can go ``finer'' than $T_p$, making use of  the decomposition of $n_1\cdots n_m$ into factors.
Since not all spatial partitions in $\Pm$ come from amplifications, the assignment $p\mapsto S_p$ is richer than the assignment $p\mapsto T_p$.
\end{rem}

Proposition \ref{PropTpBS} translates to the following.
\begin{prop}\label{PropSp}
We have:
\begin{itemize}
\item[(a)] $S_p\otimes S_q=S_{p\otimes q}$,
\item[(b)] $(S_p)^*=S_{p^*}$,
\item[(c)] $S_qS_p=(n_1\cdots n_m)^{b(q,p)}S_{qp}$,
\item[(d)] $S_{\idpart^{(m)}}=\id:\C^{n_1n_2\ldots n_m}\to\C^{n_1n_2\ldots n_m}$,
\item[(e)] $S_{\paarpart^{(m)}}=\sum_{(i_1,\ldots,i_m)}e_{(i_1,\ldots,i_m)}\otimes e_{(i_1,\ldots,i_m)}\in(\C^{n_1n_2\ldots n_m})^{\otimes 2}$.

\end{itemize}
\end{prop}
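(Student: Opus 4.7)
All five items are verified by direct unpacking of the definitions of $\delta_p(I,J)$ and $S_p$, in complete analogy with the proof of Proposition \ref{PropTpBS}. The plan is to check them in the order (a), (b), (d), (e), (c), since (c) is the only non-trivial one.

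For (a), observe that the blocks of $p\otimes q$ are precisely the disjoint union of the blocks of $p$ and the blocks of $q$, placed side by side. Hence if we decorate the upper plane with a concatenated multi-index $I\cdot I'$ and the lower plane with $J\cdot J'$, the factorisation $\delta_{p\otimes q}(I\cdot I',J\cdot J')=\delta_p(I,J)\,\delta_q(I',J')$ is immediate, and $S_{p\otimes q}=S_p\otimes S_q$ follows by unwinding the definition of the tensor product of linear maps. For (b), the involution $p^*$ simply interchanges the roles of upper and lower planes, so $\delta_{p^*}(J,I)=\delta_p(I,J)$, and taking adjoints with respect to the canonical inner product on $(\C^{n_1\cdots n_m})^{\otimes k}$ gives $(S_p)^*=S_{p^*}$. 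For (d) and (e) one reads the relevant $\delta$'s off directly: the blocks of $\idpart^{(m)}$ connect upper and lower points levelwise, forcing $I=J$, and the blocks of $\paarpart^{(m)}$ connect the two lower columns level by level, forcing $J_1=J_2\in [n_1\times\cdots\times n_m]$.

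For (c), I would write the composition $qp$ by stacking $q$ on top of $p$, decorating the identified middle plane (of $r\times m$ points) by an auxiliary multi-index $K$, and summing. For fixed $I,J$ we have
\[
(S_qS_p)(e_I,e_J)=\sum_K \delta_q(I,K)\,\delta_p(K,J)\cdot e_J,
\]
and each middle block of the composed partition falls into one of two categories: either it is part of a block of $qp$ that touches top or bottom — in which case its $K$-decoration is forced by $(I,J)$ — or it is entirely inside the middle, i.e.\ a ``disconnected string'' of the composition. Each disconnected block is summed over freely, and under the implicit $\ker(n_1,\ldots,n_m)$-graded hypothesis all levels spanned by such a block carry the same dimension, so that summing produces a clean factor. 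Collecting these factors over all disconnected blocks yields $(n_1\cdots n_m)^{b(q,p)}\,S_{qp}$.

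Alternatively — and this is the route I would actually take to avoid bookkeeping mistakes — I would reduce to the case $m=1$. The isomorphism $P^{(m)}(k,l)\cong P(km,lm)$ of Remark \ref{RemPmAndP} and the identification $\C^{n_1\cdots n_m}\cong \C^{n_1}\otimes\cdots\otimes\C^{n_m}$ together match $S_p$ with the classical Banica--Speicher map $T_{p'}$ (compare Remark \ref{RemKey}), and they match the spatial category operations on partitions with the one-dimensional ones. The composition formula of Proposition \ref{PropTpBS}(c) then transports verbatim, with $n$ replaced by $n_1\cdots n_m$, to give the stated identity.

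The main obstacle is (c): concretely, verifying that the index summation over the disconnected middle blocks really does produce a uniform power of $n_1\cdots n_m$, as opposed to a more complicated product depending on which levels each block spans. The $\ker$-graded condition is what ensures this, and the cleanest way to see it is through the reduction to Proposition \ref{PropTpBS}(c) via Remark \ref{RemKey}.
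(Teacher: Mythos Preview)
Your ``alternative route'' is exactly the paper's proof: identify $S_p$ with the Banica--Speicher map $T_p$ via $\C^{n_1\cdots n_m}\cong\C^{n_1}\otimes\cdots\otimes\C^{n_m}$ and $P^{(m)}(k,l)\cong P(km,lm)$, then import (a)--(c) from Proposition~\ref{PropTpBS} and read off (d), (e) from Remark~\ref{RemKey}.

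One caution about the scalar in (c), which you rightly flagged as the only delicate point but then resolved too quickly. When all $n_i=n$, the identification puts $T_p$ on $(\C^n)^{\otimes km}$, and Proposition~\ref{PropTpBS}(c) gives the factor $n^{b(q,p)}$ with $n$ the \emph{single-level} dimension, not $(n_1\cdots n_m)^{b(q,p)}=(n^m)^{b(q,p)}$. Concretely, take $m=2$, $n_1=n_2=n$, $p=\paarpart^{(2)}$, $q=(\paarpart^{(2)})^*$: then $S_qS_p=\sum_{I\in[n\times n]}1=n^2$, while there are $b(q,p)=2$ removed loops (one per level), so the stated formula would predict $n^4$. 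The $\ker$-graded hypothesis guarantees that each disconnected block contributes a well-defined $n_i$, but collecting these gives $\prod_{B}n_{i(B)}$, not a uniform power of $n_1\cdots n_m$. The paper's own reduction has the same slip; fortunately only the fact that $S_qS_p$ is a \emph{scalar} multiple of $S_{qp}$ is used downstream.
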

\begin{proof}
Let $p\in \Pm(k,l)$. Viewing it as a partition in $P(km,lm)$, we may assign the following map to it:
\begin{align*}
T_p:(\C^{n_1}\otimes\ldots\otimes\C^{n_m})^{\otimes k}&\to(\C^{n_1}\otimes\ldots\otimes\C^{n_m})^{\otimes l}\\
e_{i_1}\otimes\ldots\otimes e_{i_{km}}&\mapsto\sum_{j_1,\ldots,j_{lm}}\delta_p(I,J)e_{j_1}\otimes\ldots\otimes e_{j_{lm}}.
\end{align*}
Under the isomorphism 
\begin{align*}
\C^{n_1}\otimes\ldots\otimes\C^{n_m}&\cong \C^{n_1n_2\ldots n_m}\\
e_{i_1}\otimes\ldots \otimes e_{i_m}&\longleftrightarrow  e_{(i_1,\ldots,i_m)},
\end{align*}
it coincides with the map
\begin{align*}
S_p:(\C^{n_1n_2\ldots n_m})^{\otimes k}&\to(\C^{n_1n_2\ldots n_m})^{\otimes l}\\
e_{(i_1^1,\ldots,i_m^1)}\otimes\ldots\otimes e_{(i_1^k,\ldots,i_m^k)}&\mapsto\sum_{j_1^1,\ldots,j_m^1,\ldots,j_1^l,\ldots,j_{m}^l}\delta_p(I,J)e_{(j_1^1,\ldots,j_m^1)}\otimes\ldots\otimes e_{(j_1^l,\ldots,j_m^l)}.
\end{align*}
Thus, the assertions (a), (b) and (c) follow directly from Proposition \ref{PropTpBS}. The assertions (d) and (e) follow from Remark \ref{RemKey}.
\end{proof}

\subsection{Definition of spatial partition quantum groups}\label{SectMeasy}

The properties of Proposition \ref{PropSp} ensure that  $\lspan\{S_p\;|\; p\in \CC(k,l)\}$ is an abstract $C^*$-tensor category in Woronowicz's sense. Hence we may apply the Tannaka-Krein Theorem to it in order to obtain a quantum group. This motivates the following definition generalizing the one by Banica and Speicher \cite{BS}. See also \cite{VSW, Web, RW} for more on Banica-Speicher quantum groups (easy quantum groups) and \cite{TW} (or rather the appendix of the arXiv version of \cite{TW}) for an explicit transition from categories of partitions to quantum groups via Tannaka-Krein.

\begin{defn}
Let $m\in\N$ and let $n_1,\ldots,n_m\in\N$. A compact matrix quantum group $(G,u)$ with $G\subset O_{n_1\cdots n_m}^+$ and $u\in M_{n_1\cdots n_m}(C(G))$ is a \emph{spatial partition quantum group}, if there is a category of $\ker(n_1,\ldots,n_m)$-graded partitions $\CC\subset \Pm$ such that for all $k,l\in\N_0$, the intertwiner spaces of $G$ are of the form
\[\Hom_G(k,l)=\lspan\{S_p\;|\; p\in \CC(k,l)\}.\]
\end{defn}

It is convenient to use multi indices from $[n_1\times\ldots\times n_m]$ for the matrix $u\in M_{n_1\cdots n_m}(C(G))$, i.e. $u=(u_{IJ})_{I,J\in [n_1\times\ldots\times n_m]}$.
Banica and Speicher \cite{BS} defined Banica-Speicher quantum groups using the maps $T_p$ for $p\in P$. For $m=1$, their quantum groups and our spatial partition quantum groups coincide.

\subsection{$C^*$-algebraic relations associated to spatial partitions}
\label{SectRelations}

The equations $S_pu^{\otimes k}=u^{\otimes l}S_p$, for $p\in\Pm(k,l)$ give rise to relations on the $u_{IJ}$. They are the following.
\begin{defn}
Let $m\in\N$, $n_1,\ldots,n_m\in\N$ and let $p\in \Pm(k,l)$. We say that elements $u_{IJ}$, $I,J\in [n_1\times\ldots\times n_m]$ \emph{satisfy the relations $R(p)$}, if, for all choices of multi indices $I=(I_1,\ldots, I_k)\in [n_1\times\ldots\times n_m]^k$ and $J=(J_1,\ldots, J_l)\in [n_1\times\ldots\times n_m]^l$, we have
\[\sum_{A_1,\ldots,A_k\in [n_1\times\ldots\times n_m]} \delta_p(A,J) u_{A_1I_1}\ldots u_{A_kI_k}=\sum_{B_1,\ldots,B_l\in [n_1\times\ldots\times n_m]} \delta_p(I,B) u_{J_1B_1}\ldots u_{J_lB_l}.\]
\end{defn}

\begin{lem}\label{LemRelations}
We have $S_pu^{\otimes k}=u^{\otimes l}S_p$ if and only if the relations $R(p)$ are satisfied.
\end{lem}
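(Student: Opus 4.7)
The plan is to unfold the intertwiner identity $S_p u^{\otimes k} = u^{\otimes l} S_p$ coordinate-by-coordinate and observe that the resulting scalar equations are literally the relations $R(p)$. First, from the defining formula
\[ S_p(e_{I_1}\otimes\cdots\otimes e_{I_k}) = \sum_{B\in[n_1\times\cdots\times n_m]^l} \delta_p(I,B)\, e_{B_1}\otimes\cdots\otimes e_{B_l}, \]
I would read off that, with respect to the canonical orthonormal basis, the matrix of $S_p$ has entries $(S_p)_{B,I}=\delta_p(I,B)$, where $I\in[n_1\times\cdots\times n_m]^k$ labels the columns and $B\in[n_1\times\cdots\times n_m]^l$ labels the rows. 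I would then promote $S_p$ to $S_p\otimes 1$ so that both sides of the equation live in the rectangular space $M_{(n_1\cdots n_m)^l,(n_1\cdots n_m)^k}(C(G))$ and can be compared entry-wise.

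Next, I would fix $I\in[n_1\times\cdots\times n_m]^k$ and $J\in[n_1\times\cdots\times n_m]^l$ and compute the $(J,I)$-entry of each side using the standard matrix-multiplication formula and the fact that $(u^{\otimes r})_{X,Y}=u_{X_1Y_1}\cdots u_{X_rY_r}$. On the left this yields
\[ [S_p\,u^{\otimes k}]_{J,I} = \sum_{A\in[n_1\times\cdots\times n_m]^k}(S_p)_{J,A}(u^{\otimes k})_{A,I} = \sum_{A}\delta_p(A,J)\,u_{A_1I_1}\cdots u_{A_kI_k}, \]
and on the right
\[ [u^{\otimes l}\,S_p]_{J,I} = \sum_{B\in[n_1\times\cdots\times n_m]^l}(u^{\otimes l})_{J,B}(S_p)_{B,I} = \sum_{B}\delta_p(I,B)\,u_{J_1B_1}\cdots u_{J_lB_l}. \]
Equality of these two expressions for every pair $(I,J)$ is exactly the definition of $R(p)$, and conversely, if $R(p)$ holds for all $I,J$, reading the calculation backwards gives the matrix identity. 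Hence the equivalence.

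This argument is essentially a bookkeeping exercise with no substantive obstacle; the only care required is in the index conventions. The potential pitfalls are (i) matching the convention $(S_p)_{B,I}=\delta_p(I,B)$ with the ``upper/lower'' role of the arguments of $\delta_p$ so that the $(J,I)$-entries land in the form stated in the definition of $R(p)$, and (ii) remembering to tensor $S_p$ with $1\in C(G)$ before multiplying it with the operator-valued matrices $u^{\otimes k}$ and $u^{\otimes l}$. Since everything here is a direct manipulation of basis expansions, no further input from the earlier structural results is needed.
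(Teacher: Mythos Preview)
Your proof is correct and takes essentially the same approach as the paper: both compute the $(J,I)$-entries of $S_pu^{\otimes k}$ and $u^{\otimes l}S_p$ (the paper does this by applying both sides to basis vectors $e_{I_1}\otimes\cdots\otimes e_{I_k}\otimes 1$ and reading off components, which is the same thing) and observe that equality of these entries is precisely $R(p)$.
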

\begin{proof}
Using the matrix units $e_{JI}\in M_{n_1\cdots n_m}(\C)$ for $J,I\in [n_1\times\ldots\times n_m]$, we write
\[u^{\otimes k}=\sum_{\substack{I_1,\ldots,I_k \\ J_1,\ldots,J_k}} e_{J_1I_1}\otimes\ldots\otimes e_{J_kI_k}\otimes u_{J_1I_1}\cdots u_{J_kI_k}\in M_{n_1\cdots n_m}(\C)\otimes \ldots\otimes M_{n_1\cdots n_m}(\C)\otimes C(G).\]
Applying it to a vector $e_{I_1}\otimes \ldots\otimes e_{I_k}\otimes 1$, we obtain
\[u^{\otimes k}(e_{I_1}\otimes \ldots\otimes e_{I_k}\otimes 1)=\sum_{A_1,\ldots,A_k} e_{A_1}\otimes\ldots\otimes e_{A_k}\otimes u_{A_1I_1}\cdots u_{A_kI_k}.\]
Thus
\begin{align*}
&S_pu^{\otimes k}(e_{I_1}\otimes\ldots\otimes e_{I_k}\otimes 1)=\sum_{J_1,\ldots,J_k} e_{J_1}\otimes\ldots\otimes e_{J_l}\otimes\left(\sum_{A_1,\ldots,A_k} \delta_p(A,J) u_{A_1I_1}\cdots u_{A_kI_k}\right),\\ 
&u^{\otimes k}S_p(e_{I_1}\otimes\ldots\otimes e_{I_k}\otimes 1)=\sum_{J_1,\ldots,J_k} e_{J_1}\otimes\ldots\otimes e_{J_l}\otimes\left(\sum_{B_1,\ldots,B_l} \delta_p(I,B) u_{J_1B_1}\cdots u_{J_lB_l}\right).
\end{align*}
Hence, $S_pu^{\otimes k}=u^{\otimes l}S_p$ if and only if the relations $R(p)$ hold.
\end{proof}

\begin{prop}\label{PropTKeasy}
Let $G\subset O_{n_1\ldots n_m}^+$ be a compact matrix quantum group. If $C(G)$ is the universal unital $C^*$-algebra generated by self-adjoint elements $u_{IJ}$ such that $u$ is orthogonal and the relations $R(p)$ are satisfied for all $p\in\CC$ for some $\ker(n_1,\ldots,n_m)$-graded category $\CC\subset \Pm$, then $G$ is a spatial partition quantum group.

In particular, if $\CC=\langle p_1,\ldots, p_k\rangle$, then the relations $R(p)$ are satisfied for all $p\in\CC$  if and only if the relations $R(p_1), \ldots, R(p_k)$ and $R(\paarpart^{(m)})$ are satisfied.
\end{prop}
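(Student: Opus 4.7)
The plan is a direct application of the Tannaka-Krein theorem, using Lemma \ref{LemRelations} as the dictionary between the relations $R(p)$ and membership $S_p\in\Hom_G(k,l)$. Under that dictionary, the hypothesis immediately delivers the inclusion $\lspan\{S_p\;|\; p\in\CC(k,l)\}\subset \Hom_G(k,l)$ for every $k,l\in\N_0$; everything else is about obtaining the reverse inclusion.

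For this, I would feed the spans $(\lspan\{S_p\;|\; p\in\CC(k,l)\})_{k,l}$ into Tannaka-Krein. By Proposition \ref{PropSp}, they form a concrete monoidal $W^*$-category which contains $\id=S_{\idpart^{(m)}}$ as well as the self-conjugate vector $S_{\paarpart^{(m)}}$, so the theorem yields a universal compact matrix quantum group $(H,v)$ with $v=\bar v$ whose intertwiners are exactly these spans. By construction $v$ has self-adjoint entries and, using Lemma \ref{LemRelations} in the opposite direction, is orthogonal and satisfies every $R(p)$ for $p\in\CC$. The universal property of $C(G)$ therefore produces a surjection $C(G)\twoheadrightarrow C(H)$, so $H$ is a quantum subgroup of $G$; pulling intertwiners back along this surjection gives $\Hom_G(k,l)\subset \Hom_H(k,l)=\lspan\{S_p\;|\; p\in\CC(k,l)\}$. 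Combined with the first inclusion this forces $G=H$ and completes the proof of the first assertion.

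For the ``in particular'' part, $(\Rightarrow)$ is trivial. For $(\Leftarrow)$, assume $R(p_1),\ldots,R(p_k)$ and $R(\paarpart^{(m)})$, equivalently $S_{p_1},\ldots,S_{p_k},S_{\paarpart^{(m)}}\in\Hom_G$, together with the automatic $S_{\idpart^{(m)}}=\id\in\Hom_G$. By Proposition \ref{PropSp}(a)--(c), the set $\{p\in\Pm\;|\; S_p\in\Hom_G\}$ is closed under tensor product, involution, and composition -- the positive scalar $(n_1\cdots n_m)^{b(q,p)}$ appearing in composition is absorbed into the ambient linear span -- so it contains the entire category $\langle p_1,\ldots, p_k\rangle=\CC$ generated by the assumed partitions. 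A final application of Lemma \ref{LemRelations} yields $R(p)$ for all $p\in\CC$. The only delicate step above is the application of Tannaka-Krein, and the work already carried out in Proposition \ref{PropSp} is precisely what supplies the tensor-category axioms required.
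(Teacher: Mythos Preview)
Your proof is correct and follows essentially the same route as the paper: build the $W^*$-tensor category from the spans $\lspan\{S_p\;|\;p\in\CC\}$, apply Tannaka--Krein to obtain $H$, use the universal property of $C(G)$ to get $C(G)\twoheadrightarrow C(H)$, and conclude $G=H$. For the ``in particular'' part you argue via intertwiners (Lemma~\ref{LemRelations} and Proposition~\ref{PropSp}) whereas the paper phrases it as a direct algebraic verification that $R(p\otimes q)$, $R(pq)$, $R(p^*)$ follow from $R(p)$ and $R(q)$; these are the same argument under the dictionary you invoke.
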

\begin{proof}
The proof is similar to \cite[Prop. 5.7]{TW} (see also the appendix of the arXiv version of \cite{TW}): The space $W:=\lspan\{S_p\;|\; p\in\CC\}$ is a $W^*$-tensor category in the sense of Woronowicz; hence we may associate a quantum group $H$ to it and the generators of $C(H)$ satisfy all relations $R(p)$, by Lemma \ref{LemRelations}. We thus have a surjection from $C(G)$ to $C(H)$. Conversely, $C(G)$ is a model of $W$ which yields a map from $C(H)$ to $C(G)$ by universality of $H$. Hence $G=H$.

It is a direct algebraic computation to check that the relations $R(p\otimes q)$, $R(pq)$ and $R(p^*)$ hold, whenever $R(p)$ and $R(q)$ hold. Thus, the relations $R(p_1), \ldots, R(p_k)$ and $R(\paarpart^{(m)})$ imply the relations $R(p)$ for all $p\in\CC$.
\end{proof}

Here is a list of the relations associated to the generators of $P^{(2)}$ and $P_2^{(2)}$.

\begin{align*}
\idpart^{(2)} &:
u_{(i_1i_2)(j_1j_2)}=u_{(i_1i_2)(j_1j_2)}\\
\paarpart^{(2)}&, {\paarpart^{(2)}}^* :
\sum_{g_1,g_2} u_{(i_1,i_2)(g_1,g_2)}u_{(j_1,j_2)(g_1,g_2)}
=\sum_{g_1,g_2} u_{(g_1,g_2)(i_1,i_2)}u_{(g_1,g_2)(j_1,j_2)}
=\delta_{i_1j_1}\delta_{i_2j_2}\\
\FourOneIdTwopart &:
u_{(k_1,k_2)(i_1,i_2)}u_{(k_3,k_4)(i_1,i_3)}=0 \textnormal{ if } k_1\neq k_3;\qquad
u_{(i_1,i_2)(k_1,k_2)}u_{(i_1,i_3)(k_3,k_4)}=0 \textnormal{ if } k_1\neq k_3\\
\IdOneFourTwopart &:
u_{(k_1,k_2)(i_1,i_2)}u_{(k_3,k_4)(i_3,i_2)}=0 \textnormal{ if } k_2\neq k_4;\qquad
u_{(i_1,i_2)(k_1,k_2)}u_{(i_3,i_2)(k_3,k_4)}=0 \textnormal{ if } k_2\neq k_4
\end{align*}
\begin{align*}
\SingSingOneIdTwopart &:
\sum_g u_{(g,b_2)(i_1,i_2)}=\sum_h u_{(b_1,b_2)(h,i_2)} \textnormal{ (in particular independent of } i_1,b_1)\\
\IdOneSingSingTwopart &:
\sum_g u_{(b_1,g)(i_1,i_2)}=\sum_h u_{(b_1,b_2)(i_1,h)} \textnormal{ (in particular independent of } i_2,b_2)\\
\SingOneSingTwopart&, \SingOneSingTwopart {}^* :
\sum_{g_1,g_2} u_{(i_1,i_2)(g_1,g_2)}
=\sum_{g_1,g_2} u_{(g_1,g_2)(j_1,j_2)}
=1\\
\CrossOneIdTwopart &:
u_{(b_1,b_2)(i_1,i_2)}u_{(b_3,b_4)(i_3,i_4)}
=u_{(b_3,b_2)(i_3,i_2)}u_{(b_1,b_4)(i_1,i_4)}\\
\IdOneCrossTwopart &:
u_{(b_1,b_2)(i_1,i_2)}u_{(b_3,b_4)(i_3,i_4)}
=u_{(b_1,b_4)(i_1,i_4)}u_{(b_3,b_2)(i_3,i_2)}\\
\PaarPaarOneIdTwopart &:
\delta_{j_1j_3}\sum_gu_{(g,j_2)(i_1,i_2)}u_{(g,j_4)(i_3,i_4)}=\delta_{i_1i_3}\sum_hu_{(j_1,j_2)(h,i_2)}u_{(j_3,j_4)(h,i_4)}
\\
\IdOnePaarPaarTwopart &:
\delta_{j_2j_4}\sum_gu_{(j_1,g)(i_1,i_2)}u_{(j_3,g)(i_3,i_4)}=\delta_{i_2i_4}\sum_hu_{(j_1,j_2)(i_1,h)}u_{(j_3,j_4)(i_3,h)}\\
\FourOneTwopart &:
\delta_{b_1b_2}\sum_{g} u_{(g,g)(i_1,i_2)}
=\delta_{i_1i_2}\sum_{h} u_{(b_1,b_2)(h,h)}\\
\PaarOneTwopart &:
\sum_{g} u_{(b_1,b_2)(g,g)}=\delta_{b_1b_2}\\
\PaarPaarOneTwopart &:
\delta_{i_1i_2}\sum_gu_{(g,g)(j_1,j_2)}=\delta_{j_1j_2}\sum_{g} u_{(i_1,i_2)(g,g)}\\
\CrossOneTwopart &:
u_{(i_1,i_2)(j_1,j_2)}=u_{(i_2,i_1)(j_2,j_1)}
\end{align*}

\begin{rem}
Inspired from the above relations for $\PaarOneTwopart$, we view them more generally for $(u_{ij})_{i,j=1,\ldots,n}$ as
\[\sum_{k\in I} u_{ik}=\delta_{i\in I}\]
for some subset $I\subset\{1,\ldots,n\}$. For instance:
\[\sum_{k\textnormal{ even}} u_{ik}=\delta_{i\textnormal{ even}}.\]
It is easy to check that these relations pass through the comultiplication. Hence, one can define some partial versions of quantum permutation groups.
\end{rem}

\section{Products of categories}
\label{SectProducts}

Given two categories of partitions $\CC_1$ and $\CC_2$ -- how can we form a new one from this data? Several possibilities will be developped in the sequel.

\subsection{Kronecker product of categories}

In the setting of spatial partition quantum groups, we have an obvious possibility to form a new category out of two given categories $\CC_1\subset P$ and $\CC_2\subset P$: We simply put $\CC_1$ on level one and $\CC_2$ on level two. More generally, we have the following setup.

\begin{defn}\label{DefKronecker}
Let $s\in\N$. Let $m_i\in\N$ and let $\pi_i\in P(m_i)$ for $i=1,\ldots,s$.
\begin{itemize}
\item[(a)] Let $k,l\in\N_0$. Let $p_i\in P^{(m_i)}_{\pi_i}(k,l)$ for $i=1,\ldots,s$. We denote by
\[\begin{pmatrix}p_s\\\vdots\\p_1\end{pmatrix}\in P^{(m_1+\ldots+m_s)}_{\pi_1\otimes\ldots\otimes \pi_s}(k,l)\]
the partition given by placing $p_1$ on the levels $1$ to $m_1$, placing $p_2$ on the levels $m_1+1$ to $m_1+m_2$, and so on.
\item[(b)] Let $\CC_i\subset P^{(m_i)}_{\pi_i}$ be sets of $\pi_i$-graded partitions, for $i=1,\ldots,s$. We denote by
\[\CC_1\times\ldots\times\CC_s:=\{ \begin{pmatrix}p_s\\\vdots\\p_1\end{pmatrix}\in P^{(m_1+\ldots+m_s)}_{\pi_1\otimes\ldots\otimes \pi_s}\;|\; p_i\in\CC_i \textnormal{ for all }i=1,\ldots,s\}\subset P^{(m_1+\ldots+m_s)}_{\pi_1\otimes\ldots\otimes \pi_s}\]
the \emph{Kronecker product} of the sets $\CC_i$, $i=1,\ldots,s$.
\end{itemize}
\end{defn}

\begin{lem}
Let $\CC_i\subset P^{(m_i)}_{\pi_i}$ be categories of $\pi_i$-graded spatial partitions, for $i=1,\ldots,s$. Then $\CC_1\times\ldots\times\CC_s\subset P^{(m_1+\ldots+m_s)}_{\pi_1\otimes\ldots\otimes \pi_s}$ is category of $\pi_1\otimes\ldots\otimes \pi_s$-graded  spatial partitions.
\end{lem}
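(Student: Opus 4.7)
The plan is to verify the four defining properties of a category of spatial partitions (tensor product, composition, involution, containment of $\idpart$ and $\paarpart$) plus the grading condition, by reducing each to the corresponding property of the constituent categories $\CC_i$. The key observation is that the operations on $P^{(m_1+\ldots+m_s)}$ act blockwise on the levels: since a stacked partition $\begin{pmatrix}p_s\\\vdots\\p_1\end{pmatrix}$ uses the levels $1,\ldots,m_1$ only for $p_1$, the levels $m_1+1,\ldots,m_1+m_2$ only for $p_2$, and so on, none of the category operations mix points from different level blocks.

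First, I would check the base partitions. By direct inspection of Definition \ref{DefmAmplifiedPart}, $\idpart^{(m_1+\ldots+m_s)}$ and $\paarpart^{(m_1+\ldots+m_s)}$ are simply the stacks $\begin{pmatrix}\idpart^{(m_s)}\\\vdots\\\idpart^{(m_1)}\end{pmatrix}$ and $\begin{pmatrix}\paarpart^{(m_s)}\\\vdots\\\paarpart^{(m_1)}\end{pmatrix}$, which lie in the Kronecker product since each $\CC_i$ contains $\idpart^{(m_i)}$ and $\paarpart^{(m_i)}$ by assumption.

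Next, I would verify closure under the three category operations. Given partitions $p=\begin{pmatrix}p_s\\\vdots\\p_1\end{pmatrix}$ and $q=\begin{pmatrix}q_s\\\vdots\\q_1\end{pmatrix}$ in the Kronecker product with $p_i,q_i\in\CC_i$, I claim
\[
p\otimes q=\begin{pmatrix}p_s\otimes q_s\\\vdots\\p_1\otimes q_1\end{pmatrix},\qquad
qp=\begin{pmatrix}q_sp_s\\\vdots\\q_1p_1\end{pmatrix},\qquad
p^*=\begin{pmatrix}p_s^*\\\vdots\\p_1^*\end{pmatrix}.
\]
Each identity is immediate from the definition of the operations in Section \ref{SectMCateg}: tensor product places the two partitions side by side without touching the level coordinate, composition identifies matching upper/lower points which have the same level, and involution swaps upper and lower planes again preserving levels. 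Since each $\CC_i$ is a category, the right-hand sides belong termwise to the corresponding $\CC_i$, so the stacked partition lies in $\CC_1\times\ldots\times\CC_s$.

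Finally, the grading condition is automatic: if two points $(x_1,y_1)$ and $(x_2,y_2)$ lie in the same block of some $\begin{pmatrix}p_s\\\vdots\\p_1\end{pmatrix}$, then by construction they belong to the same $p_i$, so $y_1$ and $y_2$ both lie in the level range of $p_i$, and the block of $\pi_1\otimes\ldots\otimes\pi_s$ containing $y_1$ is the corresponding block of $\pi_i$ (translated appropriately), which by $\pi_i$-gradedness of $p_i$ also contains $y_2$. No step here is subtle; the only thing to be careful about is bookkeeping the level shifts in the stack, but once one notes that all category operations leave the level coordinate untouched, there is no real obstacle.
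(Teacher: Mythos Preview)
Your proof is correct and is exactly the straightforward verification the paper has in mind; the paper's own proof consists of the single sentence ``The proof is straightforward.'' You have simply spelled out the levelwise bookkeeping that this sentence leaves implicit.
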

\begin{proof}
The proof is straightforward.
\end{proof}

\subsection{Glued tensor products of spatial partition quantum groups}

It is natural to ask for the quantum group picture of the above Kronecker product of categories. Recall the following product of quantum groups from \cite[Def. 6.4]{TW}.

\begin{defn}\label{DefGlued}
Let $(G,u)$ and $(H,v)$ be two compact matrix quantum groups with $u=(u_{ij})_{i,j=1,\ldots,n}$ and $v=(v_{kl})_{k,l=1,\ldots,m}$. The \emph{glued direct product} $G\tilde\times H$ of $G$ and $H$ is given by the $C^*$-subalgebra
\[C(G\tilde\times H):=C^*(u_{ij}v_{kl}\;|\; i,j=1,\ldots,n\textnormal{ and } k,l=1,\ldots m)\subset C(G)\otimes_{\textnormal{max}} C(H).\]
Here, we identify $C(G)\otimes_{\textnormal{max}} C(H)$ with the universal $C^*$-algebra generated by elements $u_{ij}\in C(G)$ and $v_{kl}\in C(H)$ such that all $u_{ij}$ commute with all $v_{kl}$.
\end{defn}

\begin{thm}\label{ThmProduct}
Let $(G_i,u_i)\subset O_{N_i}^+$ be spatial partition quantum groups with categories $\CC_i\subset P^{(m_i)}$ for $i=1,2$ and $N_i=n_1^i\cdots n_{m_i}^i$. The spatial partition quantum group associated to the category $\CC_1\times \CC_2$ is $G_1\tilde\times G_2\subset O_{N_1N_2}^+$.
\end{thm}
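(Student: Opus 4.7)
The plan is to identify the intertwiner spaces on both sides and conclude by Tannaka--Krein (or by Proposition~\ref{PropTKeasy}). I will work with the fundamental representation $w$ of $G_1\tilde\times G_2$ whose entries are $w_{(i_1,i_2)(j_1,j_2)}=(u_1)_{i_1j_1}(u_2)_{i_2j_2}$. Under the canonical unitary reshuffling $\C^{N_1N_2}\cong\C^{N_1}\otimes\C^{N_2}$, and after interleaving the $k$ tensor factors, the representation $w^{\otimes k}$ is carried onto $u_1^{\otimes k}\otimes u_2^{\otimes k}$ acting on $(\C^{N_1})^{\otimes k}\otimes(\C^{N_2})^{\otimes k}$.

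The central combinatorial step is the following factorisation lemma, which I would prove by a direct unpacking of the definition of $\delta_p$: for any $p_1\in\CC_1(k,l)$ and $p_2\in\CC_2(k,l)$, the linear map $S_{\binom{p_2}{p_1}}$ associated to the Kronecker-stacked partition $\binom{p_2}{p_1}\in(\CC_1\times\CC_2)(k,l)$ coincides, under the same reshuffling, with $S_{p_1}\otimes S_{p_2}$. Indeed, by Definition~\ref{DefKronecker}, the partition $\binom{p_2}{p_1}$ places $p_1$ on the lowest $m_1$ levels and $p_2$ on the next $m_2$ levels with no strings crossing between these two blocks; so the Kronecker $\delta$-function factors as $\delta_{\binom{p_2}{p_1}}((I_1,I_2),(J_1,J_2))=\delta_{p_1}(I_1,J_1)\delta_{p_2}(I_2,J_2)$, and the factorisation of $S_{\binom{p_2}{p_1}}$ follows.

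Next I would identify the intertwiners. Because the entries of $u_1$ commute with those of $u_2$ inside $C(G_1)\otimes_{\max}C(G_2)$, the relation $Tw^{\otimes k}=w^{\otimes l}T$ is exactly the intertwiner relation for $u_1\otimes u_2$ as a representation of the genuine direct product $G_1\times G_2$; hence
\[\Hom_{G_1\tilde\times G_2}(k,l)=\Hom_{G_1\times G_2}\bigl((u_1\otimes u_2)^{\otimes k},(u_1\otimes u_2)^{\otimes l}\bigr).\]
By the standard Fubini-type factorisation for intertwiners of tensor products of representations of a direct product of compact quantum groups, the right-hand side equals
\[\Hom_{G_1}(u_1^{\otimes k},u_1^{\otimes l})\otimes\Hom_{G_2}(u_2^{\otimes k},u_2^{\otimes l})\]
(again after reshuffling). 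Using the hypothesis that $\CC_i$ corresponds to $G_i$, each factor equals $\lspan\{S_{p_i}\mid p_i\in\CC_i(k,l)\}$, and the key lemma then identifies the tensor product with $\lspan\{S_p\mid p\in(\CC_1\times\CC_2)(k,l)\}$.

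The main obstacle will be the Fubini-type factorisation: I either need to cite this as a classical fact about compact quantum groups (it follows from the Peter--Weyl decomposition for $G_1\times G_2$, where irreducibles are tensor products of irreducibles of $G_i$) or give a short direct argument. An alternative, self-contained route---avoiding Fubini entirely---is to use Proposition~\ref{PropTKeasy}: check by a direct computation that the relations $R(\binom{p_2}{p_1})$ for the generators of $\CC_1\times\CC_2$ coincide with the defining relations of $C(G_1\tilde\times G_2)$ (which are the orthogonality of $w$ together with $R(p_1)$ and $R(p_2)$ for generators of $\CC_1,\CC_2$, together with commutation of $u_1$-entries with $u_2$-entries encoded by the Kronecker structure), yielding the identification by universality. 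Either way, combining the two containments---$\lspan\{S_p:p\in\CC_1\times\CC_2\}\subset\Hom_{G_1\tilde\times G_2}$ (direct check from the factorisation lemma) and the reverse inclusion from the Fubini/relations argument---together with Tannaka--Krein identifies $G_1\tilde\times G_2$ as the spatial partition quantum group associated to $\CC_1\times\CC_2$.
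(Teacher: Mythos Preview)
Your main route is correct and parallels the paper's proof closely: both establish the factorisation $S_{\binom{p_2}{p_1}}\cong S_{p_1}\otimes S_{p_2}$ under the reshuffling isomorphism, deduce the easy inclusion $\lspan\{S_p:p\in\CC_1\times\CC_2\}\subset\Hom_{G_1\tilde\times G_2}$, and then argue equality. For the reverse inclusion the paper does not invoke Peter--Weyl directly; instead it runs a dimension count via the Haar state, using that the Haar state on $G_1\times G_2$ is $h_{G_1}\otimes h_{G_2}$ to get
\[
\dim\Hom_{G_1\tilde\times G_2}(k,l)=h_{G_1}\otimes h_{G_2}(\chi_{u_1\otimes u_2}^{k+l})=h_{G_1}(\chi_{u_1}^{k+l})\,h_{G_2}(\chi_{u_2}^{k+l})=\dim\Hom_{G_1}(k,l)\cdot\dim\Hom_{G_2}(k,l).
\]
This is precisely the dimension shadow of your Fubini factorisation, so the two arguments are essentially equivalent; the paper's version is a touch more self-contained since it sidesteps the full representation theory of $G_1\times G_2$.

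Your alternative route via Proposition~\ref{PropTKeasy}, however, has a genuine gap. The glued direct product $C(G_1\tilde\times G_2)$ is defined as a $C^*$-\emph{subalgebra} of $C(G_1)\otimes_{\max}C(G_2)$, not as a universal $C^*$-algebra given by generators and relations. Proposition~\ref{PropTKeasy} applies only to the latter kind of presentation. To use it here you would first have to prove that $C(G_1\tilde\times G_2)$ \emph{is} universal for the relations $R(p)$, $p\in\CC_1\times\CC_2$---but that is exactly the statement of the theorem. In particular, the commutation of the $u_1$-entries with the $u_2$-entries is not a relation on the $w_{(i_1,i_2)(j_1,j_2)}$ alone, and it is not a priori clear that every relation holding among the $w$'s in the subalgebra is a consequence of the $R(\binom{p_2}{p_1})$. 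So this alternative is circular unless you independently supply the universality, which brings you back to the Haar/Fubini argument.
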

\begin{proof}
It is straightforward to check that $G_1\tilde\times G_2$ is indeed a quantum subgroup of $O_{N_1N_2}^+$ in the sense of Definition \ref{DefSubgroup}(a) mapping the generators $w_{(i_1,i_2)(j_1,j_2)}$ of $O_{N_1N_2}^+$ to $u_{i_1j_1}v_{i_2j_2}\in C(G_1\tilde\times G_2)$.
Moreover, let $G$ be the compact matrix quantum group associated to the category $\CC_1\times \CC_2$. Thus its intertwiner space is
\[\Hom_{G}(k,l)=\lspan\{S_p\;|\; p\in \CC_1\times \CC_2\}.\]
By definition, the intertwiner space of $G_1\tilde\times G_2$ is
\[\Hom_{G_1\tilde\times G_2}(k,l)=\{T:(\C^{N_1}\otimes\C^{N_2})^{\otimes k}\to(\C^{N_1}\otimes\C^{N_2})^{\otimes l}\;|\; T(u_1\otimes u_2)^{\otimes k}=(u_1\otimes u_2)^{\otimes l}T\}.\]
To prove the statement of the theorem, it suffices to prove that
\[\Hom_{G}(k,l)=\Hom_{G_1\tilde\times G_2}(k,l)\qquad\textnormal{for all }k,l\in\N_0.\]
For doing so, let us first consider $S_p\in \Hom_{G}(k,l)$. Thus $p=\begin{pmatrix}p_2\\p_1\end{pmatrix}$ with $p_i\in\CC_i$. Reordering the elements of the tensor product $(\C^{N_1}\otimes\C^{N_2})^{\otimes k}\cong(\C^{N_1})^{\otimes k}\otimes(\C^{N_2})^{\otimes k}$, we observe that $S_p\cong S_{p_1}\otimes S_{p_2}$. Thus
\[S_p(u_1\otimes u_2)^{\otimes k}\cong(S_{p_1}\otimes S_{p_2}) (u_1^{\otimes k}\otimes u_2^{\otimes k})= (u_1^{\otimes k}\otimes u_2^{\otimes k})(S_{p_1}\otimes S_{p_2})\cong(u_1\otimes u_2)^{\otimes k}S_p.\]
Consequently, $S_p\in \Hom_{G_1\tilde\times G_2}(k,l)$ and by linearity, $\Hom_{G}(k,l)\subset \Hom_{G_1\tilde\times G_2}(k,l)$.

We conclude by a dimension argument. Recall that the dimension of $\Hom_H(k,l)$ of a compact matrix quantum group $(H,w)$ is given by $h_H(\chi_w^{k+l})$ where $h_H$ is the Haar measure on $H$ and $\chi_w=\sum_i w_{ii}$. The Haar measure of $G_1\tilde\times G_2$ is given by $h_{G_1}\otimes h_{G_2}$ by \cite{WanTens}. We have
\begin{align*}
\dim\Hom_{G_1\tilde\times G_2}(k,l)
&= h_{G_1}\otimes h_{G_2}(\chi_{u_1\otimes u_2}^{k+l})\\
&= h_{G_1}\otimes h_{G_2}(\chi_{u_1}^{k+l}\chi_{u_2}^{k+l})\\
&= h_{G_1}(\chi_{u_1}^{k+l}) h_{G_2}(\chi_{u_2}^{k+l})\\
&=\dim\Hom_{G_1}(k,l)\cdot\dim\Hom_{G_2}(k,l)\\
&=\dim \lspan\{S_{p_1}\;|\;p_1\in \CC_1\}\cdot \dim \lspan\{S_{p_2}\;|\;p_2\in \CC_2\}\\
&=\dim \lspan\{S_{p_1}\;|\;p_1\in \CC_1\}\otimes \lspan\{S_{p_2}\;|\;p_2\in \CC_2\}\\
&=\dim \lspan\{S_{p_1}\otimes S_{p_2}\;|\;p_1\in \CC_1,p_2\in \CC_2\}.
\end{align*}
Once again, reordering the elements of the tensor product $(\C^{N_1}\otimes\C^{N_2})^{\otimes k}\cong(\C^{N_1})^{\otimes k}\otimes(\C^{N_2})^{\otimes k}$, we observe that $S_{p_1}\otimes S_{p_2}\cong S_p$ whenever $p=\begin{pmatrix}p_2\\p_1\end{pmatrix}$. It allows us to conclude:
\begin{align*}
\dim\Hom_{G_1\tilde\times G_2}(k,l)
&=\dim \lspan\{S_{p_1}\otimes S_{p_2}\;|\;p_1\in \CC_1,p_2\in \CC_2\}\\
&=\dim \lspan\{S_{p}\;|\;p=\begin{pmatrix}p_2\\p_1\end{pmatrix},p_1\in \CC_1,p_2\in \CC_2\}\\
&=\dim\lspan\{S_p\;|\; p\in \CC_1\times \CC_2\}\\
&=\dim\Hom_{G}(k,l).
\end{align*}
\end{proof}

\begin{cor}
In particular, the quantum groups $S_n\tilde\times S_n$, $S_n\tilde\times S_n^+$, $S_n^+\tilde\times S_n^+$ or $O_n\tilde\times S_n^+$ etc. are spatial partition quantum groups.
\end{cor}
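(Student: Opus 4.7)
The statement follows directly from Theorem \ref{ThmProduct} once we recall that $S_n$, $S_n^+$, and $O_n$ are all Banica--Speicher quantum groups, i.e.\ spatial partition quantum groups in the case $m=1$. My plan is simply to identify, for each factor, a category $\CC \subset P = P^{(1)}$ corresponding to it in the sense of Banica and Speicher, and then invoke the theorem.

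Concretely, I would recall (from \cite{BS}) that $S_n$ corresponds to the category $P$ of all partitions, $S_n^+$ corresponds to the category $NC$ of all noncrossing partitions, and $O_n$ corresponds to the category $P_2$ of all pair partitions. Each of these is a category of partitions in Banica and Speicher's sense, hence a (one-level) category of spatial partitions, and the corresponding quantum group is a spatial partition quantum group with $m_i = 1$ and $N_i = n$.

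Applying Theorem \ref{ThmProduct} with $m_1 = m_2 = 1$ and the appropriate categories then yields that $S_n \tilde\times S_n$ corresponds to $P \times P \subset P^{(2)}$, that $S_n \tilde\times S_n^+$ corresponds to $P \times NC \subset P^{(2)}$, that $S_n^+ \tilde\times S_n^+$ corresponds to $NC \times NC \subset P^{(2)}$, and that $O_n \tilde\times S_n^+$ corresponds to $P_2 \times NC \subset P^{(2)}$, each viewed as a quantum subgroup of $O_{n^2}^+$. In particular all these quantum groups fall within our framework. The ``etc.'' is covered by the same argument for any pair of Banica--Speicher quantum groups. There is no real obstacle here: the proof is a one-line application of the theorem, so the main thing to emphasize is that the corollary is the whole point of passing to the Kronecker product of categories.
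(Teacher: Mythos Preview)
Your proposal is correct and matches the paper's approach exactly: the paper gives no proof for this corollary, treating it as an immediate consequence of Theorem \ref{ThmProduct} together with the well-known fact that $S_n$, $S_n^+$, and $O_n$ are Banica--Speicher quantum groups (with categories $P$, $NC$, and $P_2$, respectively). You have simply spelled out what is implicit.
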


We observe that the class of spatial partition quantum groups is closed under taking the glued direct product (increasing the number $m$). This is not the case for Banica-Speicher quantum groups -- taking the glued direct product, we leave the class of Banica-Speicher quantum groups entering the class of spatial partition quantum groups.

\subsection{Glued tensor products with amalgamation over partitions}

Forming the Kronecker product $\CC_1\times \CC_2$ of two categories $\CC_i\subset P$ (with Banica-Speicher quantum groups $(G,u)$ and $(H,v)$), we obtain a category of spatial partitions respecting the (two) levels. We will obtain further relations for the generators $u_{ij}v_{kl}$ of $C(G\tilde\times H)$, if we throw in partitions mixing the levels.

\begin{defn}\label{DefAmalgam}
Let $(G,u)$ and $(H,v)$ be compact matrix quantum groups such that the matrices $u$ and $v$ have the same size. Put $w_{(i,k)(j,l)}:=u_{ij}v_{kl}\in C(G)\otimes_{\textnormal{max}} C(H)$.
Let $p\in P^{(2)}$. 
The \emph{glued direct product over $p$} (or the \emph{$p$-amalgamated glued direct product} $G\tilde\times_p H$ of $G$ and $H$) is given by the $C^*$-subalgebra
\begin{align*}
C^*(u_{ij}v_{kl}\;|\; &i,j, k, l=1,\ldots,n)\\
&\subset C(G)\otimes_{\textnormal{max}} C(H) / \langle \textnormal{the elements $w_{(i,k)(j,l)}$ satisfy the relations } R(p)\rangle.
\end{align*}
\end{defn}

\begin{lem}
The $C^*$-algebra in Definition \ref{DefAmalgam} admits a comultiplication turning $G\tilde\times_p H$ into a compact matrix quantum group with fundamental representation $(u_{ij}v_{kl})_{i,j,k,l=1,\ldots,n}$.
\end{lem}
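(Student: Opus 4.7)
The plan is to build the comultiplication by descent. Recall from \cite[Def.~6.4]{TW} that $G\tilde\times H$ is already a compact matrix quantum group with fundamental representation $w_{(i,k)(j,l)} = u_{ij}v_{kl}$ and comultiplication
\[\Delta(w_{(i,k)(j,l)}) = \sum_{(a,b)} w_{(i,k)(a,b)} \otimes w_{(a,b)(j,l)}.\]
By construction, $C(G\tilde\times_p H)$ is the $C^*$-subalgebra, generated by the images of the $u_{ij}v_{kl}$, of the quotient of $C(G)\otimes_{\textnormal{max}} C(H)$ by the closed two-sided ideal $\mathcal{J}$ generated by the defining relations $R(p)$ on the $w_{(i,k)(j,l)}$; equivalently it is the quotient of $C(G\tilde\times H)$ by $\mathcal{J}\cap C(G\tilde\times H)$. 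It therefore suffices to show that the comultiplication of $G\tilde\times H$ descends to this quotient, i.e.\ that $(\pi\otimes\pi)\circ \Delta$ annihilates every generator of $\mathcal{J}\cap C(G\tilde\times H)$, where $\pi$ denotes the quotient map.

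The key step is to verify that for each multi-index pair $(I,J)$, the element
\[R^{I,J} := \sum_{A} \delta_p(A,J)\, w_{A_1 I_1}\cdots w_{A_k I_k} - \sum_{B} \delta_p(I,B)\, w_{J_1 B_1}\cdots w_{J_l B_l}\]
satisfies $\Delta(R^{I,J}) \in \mathcal{J}\otimes C(G\tilde\times H) + C(G\tilde\times H)\otimes \mathcal{J}$. Since $\Delta$ is a $*$-homomorphism and $\Delta(w_{XY}) = \sum_{Z} w_{XZ}\otimes w_{ZY}$, expanding the products gives
\begin{align*}
\Delta(R^{I,J}) = \sum_{C} \Bigl(\sum_A \delta_p(A,J) w_{A_1 C_1}\cdots w_{A_k C_k}\Bigr)&\otimes (w_{C_1 I_1}\cdots w_{C_k I_k})\\
{}- \sum_{D}(w_{J_1 D_1}\cdots w_{J_l D_l})&\otimes\Bigl(\sum_B \delta_p(I,B) w_{D_1 B_1}\cdots w_{D_l B_l}\Bigr).
\end{align*}
Applying $R(p)$ modulo $\mathcal{J}$ to the first inner bracket (with multi-index $C$ playing the role of $I$) and to the second (with $D$ in the role of $J$), both terms reduce in the quotient to the same expression $\sum_{C,D} \delta_p(C,D)\,(w_{J_1 D_1}\cdots w_{J_l D_l})\otimes (w_{C_1 I_1}\cdots w_{C_k I_k})$, so the difference lies in $\mathcal{J}\otimes C(G\tilde\times H) + C(G\tilde\times H)\otimes \mathcal{J}$. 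Conceptually this is exactly the standard fact, equivalent via Lemma~\ref{LemRelations} to the intertwining identity $(S_p\otimes\id\otimes\id)\Delta(w)^{\otimes k} = \Delta(w)^{\otimes l}(S_p\otimes\id\otimes\id)$, that an intertwiner between the corepresentations $w^{\otimes k}$ and $w^{\otimes l}$ survives when $w$ is replaced by its doubled version $\Delta(w)$.

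Once $\Delta$ is shown to descend, the required formula $\Delta(w_{(i,k)(j,l)}) = \sum_{(a,b)} w_{(i,k)(a,b)}\otimes w_{(a,b)(j,l)}$ holds on the generators, so $w$ is a fundamental representation in the sense of Definition~\ref{DefCMQG}. Invertibility of $w$ and of $\bar w$ passes from $C(G\tilde\times H)$, where the orthogonality relations $ww^T = w^T w = 1$ already hold because $u,v$ are orthogonal and commute elementwise inside $C(G)\otimes_{\textnormal{max}} C(H)$. The main (essentially combinatorial) obstacle is the twofold application of $R(p)$ carried out above; everything else --- coassociativity, the $*$-algebra structure, and the universal property --- is inherited from the parent compact matrix quantum group $G\tilde\times H$.
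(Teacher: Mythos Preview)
Your proof is correct and follows essentially the same route as the paper: both view $C(G\tilde\times_p H)$ as a quotient of $C(G\tilde\times H)$ and verify that the comultiplication descends by checking that the elements $\Delta(w_{(i,k)(j,l)})=\sum_{s,t}w_{(i,k)(s,t)}\otimes w_{(s,t)(j,l)}$ again satisfy $R(p)$. You spell out the twofold application of $R(p)$ that the paper summarizes as ``which is the case'', and you additionally address invertibility of $w$ and $\bar w$, which the paper omits.
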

\begin{proof}
The $C^*$-algebra $C(G\tilde\times H)$ of Definition \ref{DefGlued} admits a comultiplication due to \cite{TW}. We can view $C(G\tilde\times_p H)$ as a quotient of $C(G\tilde\times H)$, and consider the following diagram.
\begin{align*}
&C(G\tilde\times H)&&\stackrel{\Delta}{\longrightarrow}&&C(G\tilde\times H)\otimes C(G\tilde\times H)\\
&\alpha\downarrow && &&\downarrow\alpha\otimes\alpha\\
&C(G\tilde\times_p H)&& &&C(G\tilde\times_p H)\otimes C(G\tilde\times_p H)
\end{align*}
Hence, all we have to check is that the map $(\alpha\otimes\alpha)\circ\Delta$ factorizes through $\alpha$. For doing so, we only need to check that the elements
\[\sum_{s,t}w_{(i,k)(s,t)}\otimes w_{(s,t)(j,l)}\in C(G\tilde\times_p H)\otimes C(G\tilde\times_p H)\]
satisfy the relations $R(p)$, which is the case.
\end{proof}

\begin{thm}\label{ThmAmalProd}
Let $(G_i,u_i)\subset O_{n}^+$ be Banica-Speicher quantum groups with categories $\CC_i\subset P$ for $i=1,2$. Let $p\in P^{(2)}$. The spatial partition quantum group associated to the category $\langle\CC_1\times \CC_2,p\rangle$ is $G_1\tilde\times_p G_2\subset O_{n^2}^+$.
\end{thm}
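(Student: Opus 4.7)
The plan is to identify the spatial partition quantum group associated to $\CC:=\langle \CC_1\times\CC_2, p\rangle$ with $G_1\tilde\times_p G_2$, using Theorem~\ref{ThmProduct} as a base case and then layering on the additional relation $R(p)$. Let $H$ denote the spatial partition quantum group corresponding to $\CC$. Since $\CC$ is generated by $\CC_1\times\CC_2$ together with the single partition $p$, Proposition~\ref{PropTKeasy} identifies $C(H)$ with the universal unital $C^*$-algebra generated by self-adjoint entries $w_{(i,k)(j,l)}$ (for $i,j,k,l\in\{1,\ldots,n\}$) subject to orthogonality, to the relation $R(p)$ and $R(\paarpart^{(2)})$, and to all relations $R(q)$ for $q\in\CC_1\times\CC_2$.

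Next I would invoke Theorem~\ref{ThmProduct}: the intertwiners of $G_1\tilde\times G_2$ are exactly $\lspan\{S_q\mid q\in \CC_1\times\CC_2\}$, so Tannaka-Krein uniqueness combined with Proposition~\ref{PropTKeasy} shows that $C(G_1\tilde\times G_2)$ is canonically the universal $C^*$-algebra presented by the relations $R(q)$, $q\in\CC_1\times\CC_2$, together with orthogonality and $R(\paarpart^{(2)})$. Adding the single relation $R(p)$ then exhibits $C(H)$ as the quotient $C(G_1\tilde\times G_2)/\langle R(p)\rangle$. To match this with Definition~\ref{DefAmalgam}, one observes that $C(G_1\tilde\times G_2)$ is generated by the products $u_{ij}v_{kl}$ viewed inside $C(G_1)\otimes_{\max} C(G_2)$, so quotienting by the ideal generated by $R(p)$ yields precisely the $C^*$-subalgebra generated by these products inside $C(G_1)\otimes_{\max} C(G_2)/\langle R(p)\rangle$, namely $C(G_1\tilde\times_p G_2)$. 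The lemma immediately preceding the theorem supplies the compatible comultiplication, so $H=G_1\tilde\times_p G_2$ as compact matrix quantum groups.

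The step I expect to require the most care is the reconciliation between the ``universal quotient'' description of $C(H)$ obtained from Proposition~\ref{PropTKeasy} and the ``subalgebra of a quotient'' description of $C(G_1\tilde\times_p G_2)$ in Definition~\ref{DefAmalgam}. Concretely, one needs to argue that no unexpected relations arise when passing from the universal $C^*$-algebra to its image inside $C(G_1)\otimes_{\max} C(G_2)/\langle R(p)\rangle$. The universality of $C(H)$ yields a surjection onto the subalgebra model, and conversely Tannaka-Krein applied to $G_1\tilde\times_p G_2$ (whose intertwiner space contains $S_p$ by construction as well as all $S_q$ with $q\in\CC_1\times\CC_2$, hence all of $\lspan\{S_q\mid q\in\CC\}$) provides the reverse map; together they force the surjection to be an isomorphism. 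All remaining verifications reduce to routine manipulations of the category operations and Proposition~\ref{PropSp}.
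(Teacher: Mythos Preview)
Your overall strategy mirrors the paper's, and the first two paragraphs are correct. The gap is in your final paragraph: the two maps you produce both point the same way. Your first map, from the universality of $C(H)$, is a surjection $C(H)\to C(G_1\tilde\times_p G_2)$. For the ``reverse'' you observe that $\Hom_{G_1\tilde\times_p G_2}\supset\lspan\{S_q\mid q\in\CC\}=\Hom_H$ and invoke Tannaka--Krein. But the universal object for $\Hom_H$ is $H$, so this says only that $G_1\tilde\times_p G_2$ is a quantum \emph{subgroup} of $H$---i.e.\ again a surjection $C(H)\to C(G_1\tilde\times_p G_2)$. You have not produced any map $C(G_1\tilde\times_p G_2)\to C(H)$, so the two arrows cannot force an isomorphism.

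The paper obtains the genuine reverse surjection by applying Tannaka--Krein universality to $G_1\tilde\times G_2$ instead (this universality is exactly what Theorem~\ref{ThmProduct} provides). Since $\Hom_H\supset\Hom_{G_1\tilde\times G_2}$, one gets a surjection $\phi:C(G_1\tilde\times G_2)\to C(H)$ with $w_{IJ}\mapsto v_{IJ}$; since the $v_{IJ}$ satisfy $R(p)$ and $C(G_1\tilde\times_p G_2)\cong C(G_1\tilde\times G_2)/\langle R(p)\rangle$ (the relations $R(p)$ involve only the $w_{IJ}$), this $\phi$ descends to the desired surjection $C(G_1\tilde\times_p G_2)\to C(H)$. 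Combined with your forward map these are mutually inverse on generators. You already have the ingredients for this in your second paragraph; the fix is to invoke the universality of $G_1\tilde\times G_2$, not of $H$, for the second arrow.
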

\begin{proof}Let $(G,v)$ be the spatial partition quantum group associated to the category $\langle\CC_1\times \CC_2,p\rangle$. We denote by $u$ the generating matrix of $G_1\tilde\times_p G_2$ and by $w$ the generating matrix of $G_1\tilde\times G_2$. Because the relations $R(p)$ only involve elements of $C(G_1\tilde\times G_2)$, we have$$C(G_1\tilde\times_p G_2)\cong C(G_1\tilde\times G_2)/ \langle \textnormal{the elements $w_{(i,k)(j,l)}$ satisfy the relations } R(p)\rangle$$via  the canonical isomorphism $w_{IJ}\mapsto u_{IJ}$.

By definition, the space of intertwiners of $G$ contains the one of $G_1\tilde\times G_2$. By universality of Tannaka-Krein theorem, it means that $G$ is a quantum subgroup of $G_1\tilde\times G_2$, or more precisely, that there exists a surjective $*$-homomorphism $\phi:C(G_1\tilde\times G_2)\to C(G)$ mapping $w_{IJ}$ to $v_{IJ}$. Because $v_{IJ}$ satisfy the relation $R(p)$, this homomorphism can be quotiented into a surjective $*$-homomorphism $\phi:C(G_1\tilde\times_p G_2)\to C(G)$ mapping $u_{IJ}$ to $v_{IJ}$, meaning that $G$ is a quantum subgroup of $G_1\tilde\times_p G_2$.

Conversely, the space of intertwiners of $G_1\tilde\times_p G_2$ is bigger than the one of $G$, because it contains $S_p$ and $S_q$ for $q\in \CC_1\times \CC_2$, which means that it contains $\lspan\{S_q\;|\; q\in \langle\CC_1\times \CC_2,p\rangle\}$. By universality of Tannaka-Krein theorem, it implies that $G_1\tilde\times_p G_2$ is a quantum subgroup of $G$.
\end{proof}

It is straightforward to generalize Definition \ref{DefAmalgam} to products of an arbitrary finite number $m\in\N$ of quantum groups, allowing for an amalgamation with arbitrary partitions $P^{(m)}$. One can also choose $G_1$ and $G_2$ of the above proposition to be spatial partition quantum groups rather than Banica-Speicher quantum groups, after extending Definition \ref{DefAmalgam} to an amalgamation of $G\subset O_{n_1^1\cdots n_{m_1}^1}^+$ and $H\subset O_{n_1^2\cdots n_{m_2}^2}^+$ with respect to a partition $p\in P^{(m_1+m_2)}_\pi$, where $\pi=\ker(n_1^1,\ldots,n_{m_1}^1, n_1^2,\ldots,n_{m_2}^2)$. Observe that an amalgamation with a partition respecting the levels boils down to the glued direct product without amalgamation.

\begin{ex}\label{ExAmalgam}
Let $\CC_1=\CC_2=NC_2$.
\begin{itemize}
\item[(a)] For $p=\PaarOneTwopart$, the category $\langle NC_2\times NC_2,\PaarOneTwopart\rangle$ corresponds to $O_n^+\tilde\times_p O_n^+$ with
\[C(O_n^+\tilde\times_p O_n^+)=C^*(u_{ij}v_{kl}\;|\; (u_{ij}), (v_{kl}) \textnormal{ are orth.}, u_{ij}v_{kl}=v_{kl}u_{ij}, \sum_k u_{i_1k}v_{i_2k}=\delta_{i_1i_2}).\]
\item[(b)] For $p=\CrossOneTwopart$, the category $\langle NC_2\times NC_2,\CrossOneTwopart\rangle$ corresponds to $O_n^+\tilde\times_p O_n^+$ with
\[C(O_n^+\tilde\times_p O_n^+)=C^*(u_{ij}v_{kl}\;|\; (u_{ij}), (v_{kl}) \textnormal{ are orth.}, u_{ij}v_{kl}=v_{kl}u_{ij},u_{ij}v_{kl}=u_{kl}v_{ij}).\]
\end{itemize}
\end{ex}

\section{Examples of spatial partition quantum groups}
\label{SectExamples}

The new machine of spatial partition quantum groups provides many new examples of compact matrix quantum groups, some of which are presented in this section. Before doing so, we take a look at the natural cornerstones of the theory.

\subsection{Amplifications of Banica-Speicher quantum groups}

A trivial class of spatial partition quantum groups is obtained by the amplification of Banica-Speicher quantum groups.

\begin{prop}\label{PropAmplif}
Let $n_1,\ldots,n_m\in\N$. Let $G_n\subset O_n^+$ be a Banica-Speicher quantum group with category $\CC\subset P$. Then, the spatial partition quantum group associated to $[\CC]^{(m)}$ is the Banica-Speicher quantum group $G_{n_1\cdots n_m}\subset O_{n_1\cdots n_m}^+$ with category $\CC$.
\end{prop}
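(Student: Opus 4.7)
The plan is to invoke Tannaka--Krein duality and reduce the claim to equality of intertwiner spaces. Writing $N := n_1\cdots n_m$, let $G$ be the spatial partition quantum group attached to $[\CC]^{(m)}$, sitting inside $O_N^+$. Its intertwiner space $\Hom_G(k,l)$ is, by definition, $\lspan\{S_{p^{(m)}} : p\in\CC(k,l)\}$ acting on $(\C^{n_1}\otimes\cdots\otimes\C^{n_m})^{\otimes k} \cong (\C^N)^{\otimes k}$, while the Banica--Speicher quantum group $G_N$ with category $\CC$ has intertwiner space $\lspan\{T_p : p\in\CC(k,l)\}$ acting on $(\C^N)^{\otimes k}$. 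So the entire proof comes down to the identification $S_{p^{(m)}} = T_p$ for each $p\in\CC$, and this is essentially the content of Remark \ref{RemKey}.

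First I would verify the identification at the level of the combinatorial weights. By construction, $p^{(m)}$ is the disjoint union of $m$ copies of $p$, one on each level. Given decorations $I\in [n_1\times\cdots\times n_m]^k$ on top and $J\in [n_1\times\cdots\times n_m]^l$ on the bottom, the condition $\delta_{p^{(m)}}(I,J) = 1$ is that every block of $p^{(m)}$ receives a constant multi-index, i.e. that for each $s\in\{1,\ldots,m\}$ the restriction to level $s$ has the $s$-th coordinates of $I$ and $J$ constant on every block of $p$. Under the bijection $\{1,\ldots,N\}\longleftrightarrow [n_1\times\cdots\times n_m]$, constancy of the multi-index on a block of $p$ is exactly constancy of the corresponding flat index, so $\delta_{p^{(m)}}(I,J) = \delta_p(I,J)$. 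Transcribing this into the defining formulas for $S_{p^{(m)}}$ and $T_p$ (and using the isomorphism $e_{i_1}\otimes\cdots\otimes e_{i_m} \leftrightarrow e_{(i_1,\ldots,i_m)}$) yields $S_{p^{(m)}} = T_p$.

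Second, applying this pointwise to each $p\in\CC(k,l)$ gives
\[
\Hom_G(k,l) \;=\; \lspan\{S_{p^{(m)}} : p\in\CC(k,l)\} \;=\; \lspan\{T_p : p\in\CC(k,l)\} \;=\; \Hom_{G_N}(k,l)
\]
for every $k,l\in\N_0$. By Woronowicz's Tannaka--Krein theorem, a compact matrix quantum subgroup of $O_N^+$ is determined by its intertwiner spaces, hence $G = G_N$ as desired.

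The main (and only) obstacle is the bookkeeping comparison between the three-dimensional decorations indexed by $\{1,\ldots,k+l\}\times\{1,\ldots,m\}$ with values in $[n_1\times\cdots\times n_m]$ and the flat Banica--Speicher decorations indexed by $\{1,\ldots,(k+l)m\}$ with values in $\{1,\ldots,N\}$; as this is already implicit in the isomorphism $\Pm(k,l) \cong P(km,lm)$ of Remark \ref{RemPmAndP} together with Remark \ref{RemKey}, no substantive difficulty arises.
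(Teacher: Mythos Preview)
Your proof is correct and follows essentially the same approach as the paper: both reduce the statement to the equality $S_{p^{(m)}}=T_p$ (which the paper cites as Remark \ref{RemKey} and you spell out via the equality $\delta_{p^{(m)}}(I,J)=\delta_p(I,J)$), and then conclude by Tannaka--Krein. The only difference is that you expand the bookkeeping behind Remark \ref{RemKey} explicitly, whereas the paper simply invokes it.
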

\begin{proof}
By Lemma \ref{LemAmplif}, $[\CC]^{(m)}\subset \Pm$ is a category of spatial partitions; it thus gives rise to a spatial partition quantum group. For $p\in\CC(k,l)$,  the maps
\[T_p: (\C^{n_1n_2\ldots n_m})^{\otimes k}\to (\C^{n_1n_2\ldots n_m})^{\otimes l}
\quad\textnormal{ and }\quad
S_{p^{(m)}}: (\C^{n_1n_2\ldots n_m})^{\otimes k}\to (\C^{n_1n_2\ldots n_m})^{\otimes l}\]
coincide by Remark \ref{RemKey}.
As a consequence, the intertwiner spaces of the spatial partition quantum group associated to $[\CC]^{(m)}$ and of the Banica-Speicher quantum group with category $\CC$ are the same, which allows us to conclude by the Tannaka-Krein theorem.
\end{proof}

\begin{cor}
We have the following correspondences of spatial partition quantum groups:
\begin{center}
\begin{tabular}{ccccccc}
$S_{n_1\cdots n_m}^+$ & $\longleftrightarrow$ & $[NC]^{(m)}$,
&\qquad
$O_{n_1\cdots n_m}^+$ & $\longleftrightarrow$ & $[NC_2]^{(m)}$,\\
$S_{n_1\cdots n_m}$ & $\longleftrightarrow$ & $[P]^{(m)}$,
&\qquad
$O_{n_1\cdots n_m}$ & $\longleftrightarrow$ & $[P_2]^{(m)}$.
\end{tabular}
\end{center}
\end{cor}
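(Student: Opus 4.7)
The plan is to reduce the corollary directly to Proposition \ref{PropAmplif} combined with the four classical Banica-Speicher correspondences in the case $m=1$. Recall from \cite{BS} that for every $n\in\N$ one has $O_n^+\longleftrightarrow NC_2$, $S_n^+\longleftrightarrow NC$, $O_n\longleftrightarrow P_2$ and $S_n\longleftrightarrow P$ in the sense that the intertwiner spaces of the listed Banica-Speicher quantum group in $O_n^+$ are spanned by the maps $T_p$ for $p$ in the corresponding category of partitions.

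First I would apply Proposition \ref{PropAmplif} with the choice $\CC=NC_2$ and $n=n_1\cdots n_m$. The proposition asserts that the spatial partition quantum group in $O_{n_1\cdots n_m}^+$ associated to $[NC_2]^{(m)}$ is precisely the Banica-Speicher quantum group with category $NC_2$ and fundamental representation of size $n_1\cdots n_m$, i.e.\ $O_{n_1\cdots n_m}^+$. Substituting $\CC=NC$, $P_2$, and $P$ in turn yields the correspondences with $S_{n_1\cdots n_m}^+$, $O_{n_1\cdots n_m}$ and $S_{n_1\cdots n_m}$ respectively. In each case the hypothesis of Proposition \ref{PropAmplif} is satisfied because each of $NC_2, NC, P_2, P$ is a category of partitions in the sense of Banica-Speicher.

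There is essentially no obstacle: the content of the corollary is entirely packaged in Proposition \ref{PropAmplif}, whose proof already identifies $T_p$ with $S_{p^{(m)}}$ via Remark \ref{RemKey} and concludes by Tannaka-Krein. The corollary is therefore a concrete specialization and could be written as a one-line argument together with the citation of the four classical correspondences.
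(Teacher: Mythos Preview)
Your proposal is correct and matches the paper's approach exactly: the corollary is stated without an explicit proof in the paper, being an immediate consequence of Proposition \ref{PropAmplif} applied to the four classical Banica-Speicher categories $NC_2$, $NC$, $P_2$ and $P$, just as you argue.
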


\subsection{Minimal and maximal spatial partition quantum groups}

In the case $m=1$, we have $S_n\longleftrightarrow P$ and $O_n^+\longleftrightarrow NC_2$. Since every category of partitions satisfies $P\supset \CC\supset NC_2$, we have
\[S_n\subset G\subset O_n^+\]
 for Banica-Speicher quantum groups $G$. The case $m\geq 2$ is different, since we may have
\[S_{n_1\cdots n_m}\not\subset G\subset O_{n_1\cdots n_m}^+\]
for spatial partition quantum groups $G$, whenever $[P]^{(m)}\not\supset\CC\supset [NC_2]^{(m)}$ (see Example \ref{ExCateg}). We are thus interested in finding the minimal spatial partition quantum group corresponding to the maximal category of spatial partitions $\Pm$.

\begin{thm}\label{PropMin}
Let $n_1,\ldots,n_m\in\N$ and let $\pi=\ker(n_1,\ldots,n_m)$. The category $\Pm_\pi$ of all $\pi$-graded partitions corresponds to the spatial partition quantum group 
\[S_{n_{i_1}}\tilde\times\ldots \tilde\times S_{n_{i_r}}\subset O_{n_1\cdots n_m}^+,\]
 where $\{n_{i_1},\ldots, n_{i_r}\}$ is the set $\{n_1,\ldots,n_m\}$ without repetitions. 
 
 In the special case $n_1=\ldots =n_m=n$, we have
\[\Pm\quad\longleftrightarrow\quad S_n\subset O_{n^m}^+.\]
 
\end{thm}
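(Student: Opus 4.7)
The plan is to reduce the theorem to the special case $n_1=\ldots=n_m=n$, where $\pi$ is the one-block partition on $m$ points and $\Pm_\pi = \Pm$, and then to bootstrap the general case via iterated application of Theorem \ref{ThmProduct}.

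For the special case, I invoke Woronowicz's Tannaka-Krein theorem: it suffices to prove
\[\Hom_{S_n}(v^{\otimes k}, v^{\otimes l}) = \lspan\{S_p \;|\; p \in \Pm(k,l)\}\quad\text{for all } k,l\in\N_0,\]
where $v$ is the generating matrix of $S_n \subset O_{n^m}^+$ from the straightforward $m$-index generalization of Example \ref{ExCMQG}(c): $v_{IJ} = u_{i_1 j_1}\cdots u_{i_m j_m}$ for $I = (i_1,\ldots,i_m)$ and $J = (j_1,\ldots,j_m)$. Because $C(S_n)$ is commutative, this yields an equality $v = u^{\otimes m}$ of representations (under $\C^{n^m} \cong (\C^n)^{\otimes m}$), hence $v^{\otimes k} = u^{\otimes mk}$. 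Thus $\Hom_{S_n}(v^{\otimes k}, v^{\otimes l})$ identifies with $\Hom_{S_n}(u^{\otimes mk}, u^{\otimes ml})$, which by the classical Banica--Speicher result for $S_n \subset O_n^+$ equals $\lspan\{T_q \;|\; q \in P(mk, ml)\}$. Finally, Remark \ref{RemKey} (combined with the isomorphism $\Pm(k,l) \cong P(mk, ml)$ of Remark \ref{RemPmAndP}) identifies each such $T_q$ with $S_p$ for the corresponding spatial partition $p$, closing the chain of equalities.

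For the general case, rearrange the levels so that $\pi$ factors as $\pi = \pi_1 \otimes \cdots \otimes \pi_r$ with $\pi_j$ the one-block partition on $k_j$ points, where $k_j$ is the multiplicity of $n_{i_j}$ in $(n_1,\ldots,n_m)$. Under this rearrangement, $\Pm_\pi$ identifies with the Kronecker product $P^{(k_1)} \times \cdots \times P^{(k_r)}$ of Definition \ref{DefKronecker}. The special case treated above identifies each $P^{(k_j)}$ with $S_{n_{i_j}} \subset O_{n_{i_j}^{k_j}}^+$, and iterating Theorem \ref{ThmProduct} yields the spatial partition quantum group $S_{n_{i_1}} \tilde\times \cdots \tilde\times S_{n_{i_r}} \subset O_{n_1\cdots n_m}^+$ corresponding to the Kronecker product, as claimed.

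The main obstacle is the bookkeeping needed to make Remark \ref{RemKey} rigorous in this exact context: one must verify that under the identification $\C^{n^m}\cong(\C^n)^{\otimes m}$ and the resulting correspondence between multi-indices in $[n\times\cdots\times n]^k$ and ordinary indices in $\{1,\ldots,n\}^{mk}$, the three-dimensional Kronecker symbol $\delta_p$ agrees with the one-dimensional $\delta_q$ for the flattened partition. A secondary, purely combinatorial point is the level reordering in the general case: the glued direct product is symmetric in its factors, so this rearrangement does not affect the resulting quantum group, but it must be addressed to match the Kronecker product of categories exactly as defined in Definition \ref{DefKronecker}.
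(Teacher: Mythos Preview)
Your proof is correct and matches the paper's \emph{alternative} proof almost verbatim: the paper also observes that the embedding $S_n\subset O_{n^m}^+$ sends $v$ to $u^{\otimes m}$, identifies $\Hom_{S_n}(v^{\otimes k},v^{\otimes l})$ with $\Hom_{S_n}(u^{\otimes mk},u^{\otimes ml})=\lspan\{T_q\mid q\in P(mk,ml)\}=\lspan\{S_p\mid p\in\Pm(k,l)\}$, and then invokes Theorem~\ref{ThmProduct} for the general $\pi$-graded case.

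It is worth noting that the paper's \emph{primary} proof of the special case takes a different route. Rather than comparing intertwiner spaces, it works directly with the universal $C^*$-algebra $A$ defined by the relations $R(p)$ for $p$ ranging over the generators of $P^{(2)}$ listed in Corollary~\ref{CorGenerators}. It checks by hand that the elements $v'_{(i_1,i_2)(j_1,j_2)}=u_{i_1j_1}u_{i_2j_2}\in C(S_n)$ satisfy these relations (giving $\phi:A\to C(S_n)$), and conversely that the elements $u'_{ij}:=\sum_k v_{(i,k)(j,1)}\in A$ satisfy the defining relations of $C(S_n)$ (giving $\psi:C(S_n)\to A$), with $\phi$ and $\psi$ mutually inverse. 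Your intertwiner argument is cleaner and avoids this explicit verification, at the cost of appealing to the classical Schur--Weyl/easy description of $\Hom_{S_n}$; the paper's primary argument is more self-contained within the $C^*$-relational framework developed in Section~\ref{SectRelations} and makes concrete use of the generator list in Corollary~\ref{CorGenerators}.
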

\begin{proof}
We only prove the special case $m=2$ and $n_1=n_2=n$, the general case following from a straightforward adaption and an application of Theorem~\ref{ThmProduct}. Recall from Definition \ref{DefSubgroup} and Example \ref{ExCMQG} that $S_n$ can be viewed as a quantum subgroup of $O_{n^2}^+$ by mapping the generators $v_{(i_1,i_2)(j_1,j_2)}$ of $C(O_{n^2}^+)$ to the product $v'_{(i_1,i_2)(j_1,j_2)}:=u_{i_1j_1}u_{i_2j_2}$ in $C(S_n)$.

Let $A$ be the $C^*$-algebra generated by elements $v_{(i_1,i_2)(j_1,j_2)}$ satisfying all relations $(R_p)$ for all $p\in\Pm$. By Proposition \ref{PropTKeasy} and Corollary \ref{CorGenerators}, this is equivalent to satisfying all relations $(R_p)$ for all generators $p$ of $P^{(2)}$ as listed in Corollary \ref{CorGenerators}. It is easy to check that $v'_{(i_1,i_2)(j_1,j_2)}\in C(S_n)$ satisfies all these relations, hence a map $\phi:A\to C(S_n)$ mapping $v_{(i_1,i_2)(j_1,j_2)}\to v'_{(i_1,i_2)(j_1,j_2)}$ exists by the universal property. Conversely, the  elements $u_{ij}':=\sum_k v_{(ik)(j1)}\in A$ satisfy the relations of $C(S_n)$ as can be verified directly. This yields a map $\psi:C(S_n)\to A$ mapping $u_{ij}$ to $u_{ij}'$ by the universal property and we have that $\phi$ and $\psi$ are inverse to each other. Thus, $A$ and $C(S_n)$ are isomorphic; the isomorphism respects $\Delta$.

An alternative proof using intertwiners is based on the observation that the map from $C(O_{n^2}^+)$ to $C(S_n)$ maps the matrix $v$ to $u^{\otimes 2}$. Thus, intertwiners between $v^{\otimes k}$ and $v^{\otimes l}$ give rise to  intertwiners between $(u^{\otimes 2})^{\otimes k}=u^{\otimes 2k}$ and $(u^{\otimes 2})^{\otimes l}=u^{\otimes 2l}$. Since the linear span of $\{T_p\;|\;p\in P(2k,2l)\}$ coincides with the linear span of $\{S_p\;|\;p\in P^{(2)}(k,l)\}$, we deduce that the intertwiners of $S_n$ viewed as a subgroup of $O_{n^2}^+$ and the intertwiners of the spatial partition quantum group which corresponds to the category of all spatial partitions on two levels are the same, which allows us to conclude by the Tannaka-Krein theorem.
\end{proof}

We conclude that in the case $m=2$ and $n_1=n_2=n$, we have, for any spatial partition quantum group $G$,
\[S_n\subset G\subset O_{n^2}^+.\]
Recall that the class of Banica-Speicher quantum groups only covers the case $S_{n^2}\subset G\subset O_{n^2}^+$. 

\begin{rem}
Although our approach yields a larger class of quantum subgroups of $O_{n^2}^+$, we may not construct a quantum group $G$ with $S_{n^2}\subset G\subset O_{n^2}^+$ which is not a Banica-Speicher quantum group. Indeed, if $G$ is spatial partition with category $\CC\subset P^{(2)}$ and if $S_{n^2}\subset G\subset O_{n^2}^+$, then $\CC\subset [P]^{(2)}$ since $S_{n^2}$ corresponds to $[P]^{(2)}$. But this means that any partition $p\in\CC$ is a $2$-amplification of a partition $p'\in P$. Restriction of $\CC$ to partitions on its first level yields a category of partitions $\CC'\subset P$ such that $\CC=[\CC']^{(2)}$ -- hence $G$ is a Banica-Speicher quantum group by Proposition \ref{PropAmplif}.
\end{rem}

\subsection{Examples in the case $m=2$}

In this subsection, we restrict to the case $m=2$ and $n_1=n_2=n$ and we provide an incomplete list of categories $\CC\subset P_2^{(2)}$ of spatial pair partitions (all blocks are of size 2). In order to distinguish them, we introduce the following five sets.

\begin{defn}\label{DefMajorCateg}
We define the following subsets of $P_2^{(2)}$.
\begin{itemize}
\item[(a)] We let 
\[\CC_{\resplevels}:=\{\begin{pmatrix}p_2\\p_1\end{pmatrix}\;|\; p_1,p_2\in P_2\}\subset P_2^{(2)}\]
be the set of all spatial partitions respecting the levels.
\item[(b)] A spatial partition $p\in P_2^{(2)}$ is called \emph{level symmetric}, if it is symmetric when swapping the levels one and two. In other words, if two points $(x_1,y_1)$ and $(x_2,y_2)$ form a block of $p$, then also $(x_1,\bar y_1)$ and $(x_2,\bar y_2)$ form a block, where $\bar y:=\begin{cases} 1&\textnormal{ if } y=2\\ 2&\textnormal{ if } y=1\end{cases}$. We put
\[\CC_{\symm}:=\{p\in P_2^{(2)}\;|\; p \textnormal{ is level symmetric}\}\subset P_2^{(2)}.\]
\item[(c)] We let
\[\CC_{\nodiagonal}:=\{p\in P_2^{(2)}\;|\; \textnormal{no two points $(x,1)$ and $(y,2)$ with $x\neq y$ form a block}\}\subset P_2^{(2)}\]
be the set of all spatial partitions having no diagonal strings between the levels. We put
\[\CC_{\nodiagonal}^{\symm}:=\CC_{\nodiagonal}\cap\CC_{\symm}.\]
\item[(d)] We let
\[\CC_{\noviceversa}:=\{p\in P_2^{(2)}\;|\; \textnormal{no two points $(x,1)$ and $(x,2)$  form a block}\}\subset P_2^{(2)}\]
be the set of all spatial partitions having no geodesic strings between the levels. We put
\[\CC_{\noviceversa}^{\symm}:=\CC_{\noviceversa}\cap\CC_{\symm}.\]
\item[(e)] We let
\[\CC_{\even}:=\bigcup_{k+l=2n, \;n\in\N} P_2^{(2)}(k,l)\subset P_2^{(2)}\]
be the set of all spatial partitions whose number of blocks is even.
\end{itemize}
\end{defn}

\begin{rem}\label{RemMajorCateg}
We   have $\CC_{\resplevels}=\CC_{\nodiagonal}\cap\CC_{\noviceversa}$ and $\CC_{\resplevels}\subset\CC_{\even}$. Moreover, $\CC_{\noviceversa}^{\symm}\subset\CC_{\even}$.
\end{rem}

\begin{lem}\label{LemMajorCateg}
The sets $\CC_{\resplevels}$, $\CC_{\symm}$, $\CC_{\nodiagonal}^{\symm}$, $\CC_{\noviceversa}^{\symm}$ and $\CC_{\even}$ are categories of spatial partitions.
\end{lem}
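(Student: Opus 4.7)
The plan is to verify for each of the five sets the two defining axioms of a category of spatial partitions: containment of $\idpart^{(2)}$ and $\paarpart^{(2)}$, and closure under tensor product, composition, and involution. Containment of the base partitions is immediate in all cases, since $\idpart^{(2)}$ and $\paarpart^{(2)}$ respect levels, are level-symmetric, have neither diagonal nor vertical cross-level strings, and live in degrees with an even total number of points. Closure under $\otimes$ and $^*$ is routine throughout, as placing two partitions side by side or swapping upper with lower rows preserves all five defining properties. Closure under composition is easy in three cases: for $\CC_\resplevels$ it is direct bookkeeping, since identifying middle points in a composition never mixes the two levels; for $\CC_\symm$ I would introduce the involutive level-swap $\sigma\colon P^{(2)}\to P^{(2)}$, observe that it commutes with $\otimes$, composition, and $^*$, and conclude that its fixed-point set is automatically closed; and for $\CC_\even$ the parity of $k+l$ is additive under $\otimes$, invariant under $^*$, and preserved under composition because $(k+l)\equiv (k+r)+(r+l)\pmod{2}$.

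The substantial cases are $\CC_\nodiagonal^\symm$ and $\CC_\noviceversa^\symm$, where only closure under composition requires real work. My main tool is the bipartite graph associated to a composition $pq$: its vertices are the blocks of $p$ and $q$ used in the composition, and its edges are the identified middle points. Since every block has size two, each connected component is either a path, which produces a block of $pq$ joining its two non-middle endpoints, or a cycle, which is erased. The level-swap $\sigma$ acts as an automorphism of this graph, permuting its connected components, and its fixed vertices are precisely the vertical blocks; moreover, a component in this pair-partition setting is determined by its two non-middle endpoints, so distinct components produce distinct blocks of $pq$.

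I would then handle both delicate cases by contradiction within this framework. For $\CC_\nodiagonal^\symm$, suppose $pq$ has a diagonal block $\{(a,1),(b,2)\}$ with $a\neq b$, arising from a path $\Gamma$ whose blocks are same-level or vertical; since the two endpoints lie on different levels, $\Gamma$ must contain at least one vertical block, and this block is $\sigma$-fixed, so it lies in $\Gamma\cap\sigma(\Gamma)$, forcing $\sigma(\Gamma)=\Gamma$ by disjointness of distinct components, and hence $\{(a,1),(b,2)\}=\{(a,2),(b,1)\}$, giving $a=b$---a contradiction. For $\CC_\noviceversa^\symm$, a hypothetical vertical block $\{(a,1),(a,2)\}$ in $pq$ has a $\sigma$-invariant endpoint set, so the producing path $\Gamma$ again satisfies $\sigma(\Gamma)=\Gamma$; the involution $\sigma$ must then reverse $\Gamma$ and fix either a central block (when the number of blocks is odd), which would have to be vertical, or a central middle point $(c,\ell)$ (when the number is even), which would require $\ell=\bar\ell$; both cases contradict the no-vertical hypothesis or the level structure. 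The main obstacle is recognising that level symmetry is genuinely essential here: $\CC_\nodiagonal$ alone is \emph{not} closed under composition (a chain of the form same-level, vertical, same-level can create a diagonal when the two outer blocks live independently on the two levels), and the symmetry assumption is exactly what synchronises the two levels to rule this out.
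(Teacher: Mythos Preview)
Your proof is correct. The paper's own proof consists of a single sentence: it invokes Lemma~\ref{LemPiGrading}(b) with $\pi=\singleton\otimes\singleton$ for $\CC_\resplevels$ and then says ``one can directly verify stability under the category operations'' for the remaining four sets, giving no further detail. You have actually carried out that verification, and your bipartite-graph/path analysis together with the level-swap automorphism $\sigma$ is a clean and rigorous way to handle the only genuinely non-obvious step, namely closure of $\CC_\nodiagonal^\symm$ and $\CC_\noviceversa^\symm$ under composition. Your closing observation that level symmetry is essential here (since $\CC_\nodiagonal$ alone is not composition-closed) is a worthwhile remark that the paper does not make explicit.
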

\begin{proof}
We may use  Lemma \ref{LemPiGrading}(b) with $\pi=\singleton\otimes\singleton$ for the the set $\CC_{\resplevels}$. As for the others, one can directly verify stability under the category operations.
\end{proof}

 Recall that there are only three subcategories of $P_2$ in the case $m=1$, namely $NC_2$, $\langle\halflibpart\rangle$ and $P_2$ (see \cite{Web}). For $m=2$ we have many more.

\begin{thm}\label{ThmEx}
All of the following categories are subcategories of $P_2^{(2)}$. They are all distinct.
\begin{itemize}
\item[(a)] The amplifications $[NC_2]^{(2)}=\langle\emptyset\rangle$, $[\langle\halflibpart\rangle]^{(2)}=\langle\halflibpart^{(2)}\rangle$ and $[P_2]^{(2)}=\langle \CrossOneCrossTwopart\rangle$ (see Proposition \ref{PropAmplif}).
\item[(b)] The categories $\CC_1\times\CC_2$ with $\CC_i\in\{NC_2,\langle\halflibpart\rangle,P_2\}$  as in Section \ref{SectProducts}.
\item[(c)] The category $\langle\PaarPaarOneTwopart\rangle$.
\item[(d)] The category $\langle\CrossOneTwopart\rangle$.
\item[(e)] The category $\langle\CrossOneTwopart,\PaarPaarOneTwopart\rangle$.
\item[(f)] The category $\langle\PaarOneTwopart\rangle$.
\item[(g)] The category $\langle\CrossOneTwopart,\PaarOneTwopart\rangle$.
\item[(h)] The category generated by the following spatial partition.
\setlength{\unitlength}{0.5cm}
\begin{center}
\begin{picture}(4,3)
   \savebox{\boxS}
   { \begin{picture}(2,2)
\put(-0.3,0.2){\line(0,1){0.4}}
\put(0.3,0.9){\line(0,1){0.4}}
      \end{picture}}    
   \savebox{\boxVierer}
   { \begin{picture}(2,2)
   \put(-0.3,0){\usebox{\boxGrLL}}
   \put(0.7,0){\usebox{\boxGrLL}}
      \end{picture}}
\put(0,0){\usebox{\boxVierer}}
\put(0,2){\usebox{\boxVierer}}
\put(0,0.2){\line(1,4){0.7}}
\put(1,0.2){\line(1,4){0.7}}
\put(0,2.2){\line(0,-1){0.4}}
\put(1,2.2){\line(0,-1){0.4}}
\put(0,1.8){\line(1,0){1}}
\put(0.6,0.9){\line(0,1){0.4}}
\put(1.6,0.9){\line(0,1){0.4}}
\put(0.6,1.3){\line(1,0){1}}
\end{picture}
\end{center}
\item[(i)] The category $P_2^{(2)}$ itself.
\end{itemize}
\end{thm}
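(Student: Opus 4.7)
The plan is to use a battery of ``meta-category'' invariants: namely $\CC_{\resplevels}$, $\CC_{\symm}$, $\CC_{\nodiagonal}^{\symm}$, $\CC_{\noviceversa}^{\symm}$, and $\CC_{\even}$ from Definition \ref{DefMajorCateg}, which are themselves categories of spatial partitions by Lemma \ref{LemMajorCateg}. Since $\langle p_1,\ldots,p_k\rangle \subset \CC$ holds iff each $p_i \in \CC$ whenever $\CC$ is a category of spatial partitions, checking generators suffices. Every listed generator is a pair partition, so each candidate indeed lies in $P_2^{(2)}$.

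The first cut is by $\CC_{\resplevels}$: categories (a) and (b) lie inside $\CC_{\resplevels}$, while every generator of (c)--(i) carries an inter-level string and thus escapes $\CC_{\resplevels}$. Within (a)+(b) I would use the restriction-to-level maps sending a level-respecting partition to its level-$i$ pattern; these are compatible with tensor product, composition, and involution, and applied to $\CC_1 \times \CC_2$ they recover $(\CC_1, \CC_2)$, while for the amplifications $[\CC]^{(2)}$ they give $(\CC, \CC)$. Since the only subcategories of $P_2$ are $NC_2$, $\langle\halflibpart\rangle$, and $P_2$ (recalled in the excerpt just after the theorem statement in the introduction), the nine ordered pairs $(\CC_1, \CC_2)$ yield nine distinct categories, the three symmetric ones coinciding with the amplifications in (a). To separate (c)--(i), I sort by the remaining meta-categories: $\langle\PaarOneTwopart\rangle \subset \CC_{\nodiagonal}^{\symm}$ since the generator has only a geodesic inter-level block; $\langle\CrossOneTwopart\rangle \subset \CC_{\noviceversa}^{\symm}$ since the generator is purely diagonal with no same-position inter-level block; and the generator in (h) is not level-symmetric, placing (h) outside $\CC_{\symm}$ and hence outside all of (c)--(g). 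Verifying these memberships on each generator gives a table that already splits most pairs, and distinguishes (c)--(i) from (a)+(b) since the latter all lie in $\CC_{\resplevels} \subset \CC_{\symm} \cap \CC_{\nodiagonal}^{\symm} \cap \CC_{\noviceversa}^{\symm}$.

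The main obstacle is proving strict containment between categories sitting in the same meta-slot, most notably $\langle\CrossOneTwopart\rangle \subsetneq \langle\CrossOneTwopart, \PaarPaarOneTwopart\rangle \subsetneq P_2^{(2)}$ and the analogous inclusions with $\PaarOneTwopart$. For these I would construct further invariants, for instance a $\Z/2$-valued parity of inter-level blocks of a prescribed shape, or a weighted count of diagonal versus geodesic inter-level strings. The delicate point is verifying such a statistic is preserved under composition, where internal blocks get contracted and possibly merged; once a candidate statistic is written down, invariance under tensor and involution is routine, but composition requires a careful case analysis of how blocks at the ``glued'' interface recombine. Finally, $P_2^{(2)}$ contains every listed generator and much more, so its strict maximality among the candidates follows from any one of these invariants that is nontrivial on a partition excluded from the category below.
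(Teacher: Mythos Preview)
Your approach is essentially the paper's: distinguish the listed categories by their containment pattern in the five test categories of Definition \ref{DefMajorCateg}. Your treatment of (a) versus (b) via the level-restriction maps is in fact more explicit than the paper's, which simply records that the amplifications are level-symmetric while the off-diagonal Kronecker products are not.

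The gap is in your third paragraph. You assert that some pairs among (c)--(i) land in the same ``meta-slot'' and therefore require ad hoc parity-type invariants. This is false: the five test categories already separate all of (c)--(i), and you have simply not finished filling in the table. Concretely, $\PaarPaarOneTwopart\in P^{(2)}(1,1)$ consists of the two geodesic blocks $\{(1,1),(1,2)\}$ and $\{(2,1),(2,2)\}$, so $\PaarPaarOneTwopart\notin\CC_{\noviceversa}^{\symm}$; hence (d) $\subset\CC_{\noviceversa}^{\symm}$ while (e) $\not\subset\CC_{\noviceversa}^{\symm}$, separating (d) from (e). The partition $\PaarOneTwopart\in P^{(2)}(0,1)$ has $k+l=1$ odd, so (f) and (g) lie outside $\CC_{\even}$ while (c), (d), (e) lie inside; this separates (c) from (f) and (e) from (g). Finally $\CC_{\symm}$ separates (h) and (i) from (c)--(g), and $\CC_{\even}$ separates (h) from (i). Writing out the $5$-tuple of yes/no answers for each of (c)--(i) gives seven distinct signatures, so no further invariant is needed. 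The composition analysis you propose is unnecessary; Lemma \ref{LemMajorCateg} has already done that work for exactly the categories you need.
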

\begin{proof}
We may distinguish the above categories using those of Lemma \ref{LemMajorCateg}: We have the following containments of categories. Observe that $p\in\CC$ if and only if $\langle p\rangle\subset \CC$, since $\langle p\rangle$ is the smallest category containing $p$.
\begin{center}
\begin{tabular}{c|ccccc}
&$\CC_{\resplevels}$ &$\CC_{\symm}$ &$\CC_{\nodiagonal}^{\symm}$ &$\CC_{\noviceversa}^{\symm}$ &$\CC_{\even}$\\
\hline
(a) & $\subset$ & $\subset$\\
(b) & $\subset$ & $\not\subset$\\
(c) & $\not\subset$ & $\subset$ & $\subset$ & $\not\subset$ & $\subset$\\
(d) & $\not\subset$ & $\subset$ & $\not\subset$ & $\subset$ & $\subset$\\
(e) & $\not\subset$ & $\subset$ & $\not\subset$ & $\not\subset$ & $\subset$\\
(f) & $\not\subset$ & $\subset$ & $\subset$ & $\not\subset$ & $\not\subset$\\
(g) & $\not\subset$ & $\subset$ & $\not\subset$ & $\not\subset$ & $\not\subset$\\
(h) & $\not\subset$ & $\not\subset$ & & & $\subset$ \\
(i) & $\not\subset$ & $\not\subset$ & & & $\not\subset$ \\
\end{tabular}
\end{center}
Hence, all of the categories (a) to (i) are distinct.
\end{proof}


It is very likely that the above list is not complete. However, we believe that (a) and (b) list all categories respecting the levels. The above categories are of interest since they correspond to quantizations of the orthogonal group $O_n$ in a way.
By Proposition  \ref{PropAmplif}, the amplifications $[NC_2]^{(2)}$,  $[\langle\halflibpart\rangle]^{(2)}$ and $[P_2]^{(2)}$ correspond to $O_{n^2}^+$, $O_{n^2}^*$ and $O_{n^2}$ respectively. By Theorem \ref{ThmProduct}, the categories $\CC_1\times\CC_2$ with $\CC_i\in\{NC_2,\langle\halflibpart\rangle,P_2\}$  correspond to glued tensor products of $O_n^+$, $O_n^*$ and $O_n$. As for determining the quantum groups corresponding to the categories (c-h) of Theorem \ref{ThmEx}, use the $C^*$-algebraic relations of Section \ref{SectRelations}. Note that the quantum groups of Example \ref{ExAmalgam} do not come into play here, since $\langle NC_2\times NC_2,p\rangle\neq\langle p\rangle$ in both cases due to $\CC_{\symm}$.

Concerning the quantum group $G$ corresponding to the category $P_2^{(2)}$, it is easy to check, like in Theorem \ref{PropMin}, that the elements
\[v_{(i_1,i_2)(j_1,j_2)}':=u_{i_1j_1}u_{i_2j_2}\in C(O_n)\]
satisfy all relations $R(p)$ for $p\in P_2^{(2)}$ (using Theorem \ref{PropPZwei}). However, this map is not surjective; in particular, $u_{ij}':=\sum_k v_{(i,k)(j,1)}\in C(G)$ does \emph{not} give rise to an orthogonal matrix (or equivalenty: $\sum_k u_{ij}u_{k1}\neq u_{ij}$ in $C(O_n)$). Thus, we have to leave the question open to which quantum group $P_2^{(2)}$ corresponds. 


\subsection{From quantum subgroups of $O_{n^2}^+$ to quantum subgroups of $O_n^+$}

Starting with a quantum subgroup $G$ of  $O_{n^2}^+$ we may associate a quantum subgroup $\mathring G$ of $O_n^+$ to it, under certain conditions.

\begin{defn}\label{DefRing}
Let $G\subset O_{n^2}^+$ be a compact matrix quantum group such that $C(G)$ is generated by $u_{(i,k)(j,l)}$, for $i,j,k,l=1,\ldots,n$. We put
\[\mathring u_{ij}:=\sum_k u_{(i,k)(j,1)}.\]
Let $C(\mathring G)\subset C(G)$ be the $C^*$-subalgebra of $C(G)$ generated by the elements $\mathring u_{ij}$ for $i,j\in\{1,\ldots,n\}$. 
\end{defn}

The elements $\mathring u_{ij}$ are self-adjoint. We now investigate, when $C(\mathring G)$ gives rise to a compact matrix quantum group $\mathring G\subset O_n^+$. We express the necessary condition in terms of $C^*$-algebraic relations $R(p)$ associated to partitions $p\in P^{(2)}$ as in Section \ref{SectRelations}. However, our next proposition does not only work for spatial partition quantum groups, it holds for general compact matrix quantum groups.

\begin{prop}\label{PropRing}
Suppose the relations $R(p)$ for $p=\IdOneSingSingTwopart$ are satisfied for the elements $u_{(i,k)(j,l)}\in C(G)$ and suppose $S_n\subset G\subset O_{n^2}^+$ (where $S_n\subset G$ is in the sense of Example \ref{ExCMQG}). Then:
\begin{itemize}
\item[(a)] We have, independently of the choice of $x$ and $y$,
\[\mathring u_{ij}=\sum_k u_{(i,k)(j,x)}=\sum_k u_{(i,y)(j,k)}.\]
\item[(b)] The map  $\Delta:C(G)\to C(G)\otimes C(G)$ restricts to $\Delta:C(\mathring G)\to  C(\mathring G)\otimes C(\mathring G)$ with $\Delta(\mathring u_{ij})=\sum_k \mathring u_{ik}\otimes\mathring u_{kj}$.
\item[(c)] The $C^*$-algebra $C(\mathring G)$ gives rise to a compact matrix quantum group  $\mathring G$ with
\[S_n\subset\mathring G\subset O_n^+.\]
\item[(d)] If in addition the relations $R(p)$ for $p\in\{\SingOneSingTwopart, \HalfThreepart\}$ are satisfied for the elements $u_{(i,k)(j,l)}\in C(G)$, then
\[S_n\subset\mathring G\subset S_n^+.\]
\end{itemize}
\end{prop}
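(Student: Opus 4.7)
For part~(a), I plan to specialize the relation $R(\IdOneSingSingTwopart)$ displayed in Section~\ref{SectRelations}, namely $\sum_g u_{(b_1,g)(i_1,i_2)} = \sum_h u_{(b_1,b_2)(i_1,h)}$, by setting $b_1 = i$ and $i_1 = j$. This yields $\sum_g u_{(i,g)(j,x)} = \sum_h u_{(i,y)(j,h)}$ for any choices of $x = i_2$ and $y = b_2$, and specializing $x=1$ (resp.\ $y=1$) matches the definition of $\mathring u_{ij}$. Part~(b) is then a direct computation: expanding $\Delta(\mathring u_{ij}) = \sum_{k}\sum_{(a,b)} u_{(i,k)(a,b)}\otimes u_{(a,b)(j,1)}$ and swapping summation order, the inner sum $\sum_k u_{(i,k)(a,b)}$ collapses to $\mathring u_{ia}$ by~(a); a further application of~(a) folds $\sum_b u_{(a,b)(j,1)}$ into $\mathring u_{aj}$, giving the claimed formula for $\Delta$.

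For part~(c), self-adjointness of $\mathring u_{ij}$ is immediate since the $u_{(i,k)(j,1)}$ are self-adjoint. The main point is the orthogonality of $\mathring u$. My plan is to start from the $O_{n^2}^+$-orthogonality $\sum_{(j,m)} u_{(i,k)(j,m)} u_{(i',k')(j,m)} = \delta_{ii'}\delta_{kk'}$ and sum over $k'$; using part~(a) to rewrite $\sum_{k'} u_{(i',k')(j,m)}$ as $\mathring u_{i'j}$ (which is independent of $m$), this becomes $\sum_{j,m} u_{(i,k)(j,m)}\,\mathring u_{i'j} = \delta_{ii'}$. A second application of~(a) in the form $\sum_m u_{(i,k)(j,m)} = \mathring u_{ij}$ then collapses the left-hand side to $\sum_j \mathring u_{ij}\mathring u_{i'j}$, giving the first orthogonality relation; the symmetric one $\sum_j \mathring u_{ji}\mathring u_{ji'} = \delta_{ii'}$ follows identically from the other $u$-orthogonality. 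Combined with~(b), this makes $(\mathring G,\mathring u)$ a compact matrix quantum group with $\mathring G\subset O_n^+$ by universality. The inclusion $S_n\subset\mathring G$ is obtained by composing $C(\mathring G)\hookrightarrow C(G)\twoheadrightarrow C(S_n)$: the generator $\mathring u_{ij}=\sum_k u_{(i,k)(j,1)}$ is sent to $\sum_k u_{ij}u_{k1}=u_{ij}$, using $\sum_k u_{k1}=1$ in $C(S_n)$.

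For part~(d), the plan is to upgrade $\mathring u$ to a magic unitary, so that $\mathring G\subset S_n^+$. The sum-to-one conditions come from $R(\SingOneSingTwopart)$: using $\mathring u_{ij} = \sum_m u_{(i,1)(j,m)}$ (part~(a) with $y=1$) gives $\sum_j \mathring u_{ij} = \sum_{(j,m)} u_{(i,1)(j,m)} = 1$, and symmetrically $\sum_i \mathring u_{ij} = 1$. The remaining idempotency $\mathring u_{ij}^2 = \mathring u_{ij}$ is precisely what $R(\HalfThreepart)$ is designed to produce: unpacking the definition of $R$ for this partition and applying~(a) to identify the partial sums that appear as entries of $\mathring u$, one obtains the magic-unitary identity $\mathring u_{ji}\mathring u_{ji'} = \delta_{ii'}\mathring u_{ji}$, whose $i=i'$ instance is the desired idempotency. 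The main obstacle throughout is the orthogonality step in part~(c): one must exploit \emph{both} expressions for $\mathring u_{ij}$ provided by~(a) --- once to rewrite the column-sum inside the right-hand factor of a product, and once to fold the remaining free second-level index into the left-hand factor --- in order to convert the full orthogonality of $u$ into one involving only $\mathring u$. Once this ``twin use'' of~(a) is in place, the remainder of~(c) and all of~(d) reduce to specialisations of the stated partition relations and a short diagram chase.
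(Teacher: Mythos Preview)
Your proposal is correct and follows essentially the same route as the paper. Parts (a), (b) and (d) match the paper's proof almost verbatim; for the orthogonality in (c) you run the computation in the reverse direction (start from the $u$-orthogonality and sum over $k'$, then collapse via (a)), whereas the paper expands $\sum_k \mathring u_{ik}\mathring u_{jk}$ and then applies $u$-orthogonality, but this is the same chain of equalities read backwards, including the ``twin use'' of (a) you correctly identified as the key step.
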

\begin{proof}
(a) This is exactly what the relations $R(p)$ for $p=\IdOneSingSingTwopart$ are.

(b) We compute, using (a): 
\[\Delta(\mathring u_{ij})=\sum_l\Delta(u_{(i,l)(j,1)})=\sum_{l,k,m} u_{(i,l)(k,m)}\otimes u_{(k,m)(j,1)}=\sum_{k,m}\mathring u_{ik}\otimes u_{(k,m)(j,1)}=\sum_k \mathring u_{ik}\otimes\mathring u_{kj}.\]

(c) By (b), $\mathring G$ is a compact matrix quantum group. The matrix $\mathring u=(\mathring u_{ij})$ is orthogonal due to the following computation using (a) and $G\subset O_{n^2}^+$:
\[\sum_k\mathring u_{ik}\mathring u_{jk}=\sum_{k,m} \mathring u_{ik}u_{(j,1)(k,m)}=\sum_{k,l,m} u_{(i,l)(k,m)}u_{(j,1)(k,m)}=\sum_l\delta_{ij}\delta_{l1}=\delta_{ij}.\]
Similarly $\sum_k\mathring u_{ki}\mathring u_{kj}=\delta_{ij}$. Hence, $\mathring G\subset O_n^+$. As for proving $S_n\subset \mathring G$, note that by assumption we have a $^*$-homomorphism $\phi: C(G)\to C(S_n)$ mapping $u_{(i,k)(j,l)}$ to $v_{ij}v_{kl}$, where we denote the generators of $C(S_n)$ by $v_{ij}$. Thus, $\phi(\mathring u_{ij})=v_{ij}$ which proves $S_n\subset \mathring G$.

(d) All we have to check is that the elements $\mathring u_{ij}$ satisfy $\mathring u_{ij}^2=\mathring u_{ij}$ and $\sum_l\mathring u_{il}=\sum_l\mathring u_{lj}=1$. This  follows directly from (a) and the relations $R(p)$ which we list below.
\begin{align*}
&R(p) \textnormal{ for } p=\SingOneSingTwopart:
&&\sum_{g_1,g_2} u_{(b_1,b_2)(g_1,g_2)}
=\sum_{g_1,g_2} u_{(g_1,g_2)(b_1,b_2)}
=1;
\\
&R(p) \textnormal{ for } p=\HalfThreepart:
&&\sum_g u_{(b_1,b_2)(i_1,i_2)}u_{(b_1,g)(i_3,i_4)}=\delta_{i_1i_3}u_{(b_1,b_2)(i_1,i_2)}.
\end{align*}
\end{proof}

\begin{rem}
We may also define $\mathring G$ via $\mathring u_{ij}:=\sum_k u_{(k,i)(1,j)}$ and require the relations $R(p)$ with $p=\SingSingOneIdTwopart$ in Proposition \ref{PropRing}; this will yield an analogue result.
\end{rem}

We conclude that we may produce quantum groups $\mathring G$ which are intermediate between $S_n$ and $O_n^+$, just like Banica-Speicher quantum groups. However, it is not clear for the moment whether or not they yield quantum groups which are not Banica-Speicher quantum groups (the question of finding non-easy quantum groups).

\section{Outlook: the unitary case}
\label{SectOutlook}

\subsection{Categories of colored spatial partitions}
In the spirit of \cite{TWcomb, TW}, it is clear how to define unitary spatial partition quantum groups $G\subset U_{n_1\cdots n_m}^+$. Namely, we color each point of the first level of a spatial partition $p\in \Pm(k,l)$  either in white ($\circ$) or in black ($\bullet$); we then copy this color pattern to all other levels. In other words, we do not color all points arbitrarily -- the colors of all points $(x,y)\in\{1,\ldots,k+l\}\times\{1,\ldots,m\}$ for a fixed $x$ coincide. A category of colored spatial partitions is defined as in Definition \ref{DefmCateg} replacing $\paarpart^{(m)}$ and $\idpart^{(m)}$ by $\paarpartwb^{(m)}$, $\paarpartbw^{(m)}$, $\idpartww^{(m)}$ and $\idpartbb^{(m)}$. Remark that any uncolored category can be seen as a colored category by admitting any color pattern on the partitions of the uncolored category.

We then associate linear maps $S_p$ to such a colored partition $p$ exactly as in Section \ref{SectLinMMaps} -- the colorization of $p$ does not play any role for this definition. However, for the interpretation of $S_p$ as an intertwiner we \emph{do} need the colors of the points: If the color pattern of the upper first level of $p$ is the word $w\in\{\circ,\bullet\}^k$ whereas $s\in\{\circ,\bullet\}^l$ colors the lower first level, the map $S_p$ is supposed to be an intertwiner of the representations $u^w$ and $u^s$. Here, $u^w$ and $u^s$ are tensor products of $u$ and $\bar u$ according to the identifications $u^\circ=u$ and $u^\bullet=\bar u$. Using in a similar way the Tannaka-Krein duality, we can produce a unitary quantum group (we do not assume anymore that $u=\bar{u}$) from any category of colored spatial partitions.

\subsection{Noncrossing product of categories}

In analogy to Section \ref{SectProducts}, we may now define new categories from old ones. We first need a notion of non-crossing colored spatial partitions. 
For this purpose, we extend the isomorphism of Remark \ref{RemPmAndP} from white points to white and black points: 
For any $m\in\N$, $k,l\in\N_0$ and a fixed color pattern on $\{1,\ldots,k,k+1,\ldots,k+l\}\times\{1,\ldots,m\}$, the sets 
\[A:=\{1,\ldots,km,km+1,\ldots,km+lm\}\]
and
\[B:=\{1,\ldots,k,k+1,\ldots,k+l\}\times\{1,\ldots,m\}\]
are in bijective correspondence by identifying a point $(x-1)m+y\in A$, $1\leq x\leq k+l$, $1\leq y\leq m$ with the point $(x,y)\in B$ if $(x,y)$ is white and by identifying a point $xm+1-y\in A$, $1\leq x\leq k+l$, $1\leq y\leq m$ with the point $(x,y)\in B$ if $(x,y)$ is black. This reverse order on black points reflects the identity $\overline{u\otimes v}=\overline{v}\otimes \overline{u}$.

We can define two different products using this isomorphism. The definition of the noncrossing product $\CC_1\times_{\nc}\CC_2$ and of the free product $\CC_1\ast\CC_2$ of categories $\CC_1\subset P$ and $\CC_2\subset P$ of colored partitions follow Definition \ref{DefKronecker} with additional conditions of being noncrossing:
\begin{itemize}
\item We place a partition $p$ from $\CC_1$ on level one, and a partition $q$ from $\CC_2$ on level two and consider the resulting partition $\binom{q}{p}$.
\item For the definition of $\CC_1\times_{\nc}\CC_2$, we require that the partitions $p$ and $q$ do not cross each other under the above isomorphism. More precisely, $\binom{q}{p}$ is in $\CC_1\times_{\nc}\CC_2$ whenever there exists a partition $r\leq \binom{q}{p}$ which respects the levels and which is noncrossing under the above isomorphism.
\item For the definition of $\CC_1\ast\CC_2$, we require in addition that for each block of $r$ on the first level, the restriction of $p$ to this block is in $\CC_1$, and for each block of $r$ on the second level, the restriction of $q$ to this block is in $\CC_2$.
\end{itemize}
As a consequence, $\CC_1\ast\CC_2\subset \CC_1\times_{\nc}\CC_2$.
For example, putting $\paarpartwb$ on both level gives us the noncrossing partition $\{\{1,4\},\{2,3\}\}$, meaning that $\paarpartwb^{(2)}$ is an element of the noncrossing product $\CC_1\times_{\nc}\CC_2$. In addition, each restriction is $\paarpartwb$, which is in every category of colored partitions, meaning that $\paarpartwb^{(2)}\in \CC_1\ast\CC_2$. Similarly, we can verify that $\paarpartbw^{(2)}$, $\idpartww^{(2)}$ and $\idpartbb^{(2)}$ are in the free product $\CC_1\ast\CC_2$. Since the conditions are maintained under the category operations, we deduce that $\CC_1\times_{\nc}\CC_2\subset P^{(2)}$ and $\CC_1\ast\CC_2\subset P^{(2)}$ are categories of colored spatial partitions on two levels. Remark that for some categories (including $P_2$ and $NC_2$), we have $\CC_1\times_{\nc}\CC_2=\CC_1\ast\CC_2$.

Theorem~\ref{ThmFreeProduct} is the unitary version of Theorem \ref{ThmProduct}, where we used the glued free product of \cite[Def. 6.4]{TW}: if $(G,u)$ and $(H,v)$ are two compact matrix quantum groups with $u=(u_{ij})_{i,j=1,\ldots,n}$ and $v=(v_{kl})_{k,l=1,\ldots,m}$, the \emph{glued direct product} $G\tilde\ast H$ of $G$ and $H$ is given by the $C^*$-subalgebra
\[C^*(u_{ij}v_{kl}\;|\; i,j=1,\ldots,n\textnormal{ and } k,l=1,\ldots m)\subset C(G)\ast C(H).\]

\begin{thm}\label{ThmFreeProduct}
Let $(G_i,u_i)\subset O_n^+$ be Banica-Speicher quantum groups with categories $\CC_i\subset P$ for $i=1,2$. The spatial partition quantum group associated to the category $\CC_1* \CC_2$ is $G_1\tilde{*} G_2\subset U_{n^2}^+$.
\end{thm}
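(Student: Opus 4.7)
The plan is to adapt the proof of Theorem~\ref{ThmProduct} to the unitary/free setting, replacing the tensor product of $C^*$-algebras by the free product and the tensor independence of Haar states by Voiculescu free independence. I will proceed in three steps followed by noting the main technical obstacle.

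First, I would verify that $G_1 \tilde{*} G_2$ is a quantum subgroup of $U_{n^2}^+$: the assignment $w_{(i,k)(j,l)} \mapsto u_{ij} v_{kl}$ extends to a surjective $^*$-homomorphism $C(U_{n^2}^+) \to C(G_1 \tilde{*} G_2)$, since the matrix $(u_{ij}v_{kl})$ and its conjugate are unitary in $C(G_1) * C(G_2)$ as products of unitaries from separate free factors. This step is essentially identical to the tensor case and does not use the freeness.

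Second, I would show $\lspan\{S_p : p \in \CC_1 * \CC_2(w,s)\} \subset \Hom_{G_1 \tilde{*} G_2}(w,s)$ for all color words $w,s$. Given $p = \binom{q}{p_1} \in \CC_1 * \CC_2$, unpack the definition to get a level-respecting noncrossing $r \leq p$ whose block restrictions of $p_1$ and $q$ land in $\CC_1$ and $\CC_2$ respectively. The noncrossing alternation of $r$ between the two levels allows the equation $S_p(u_1 \tilde{*} u_2)^{\otimes w} = (u_1 \tilde{*} u_2)^{\otimes s} S_p$ to be factored, block-by-block in $r$, into a \emph{reduced} word in $C(G_1) * C(G_2)$ whose $G_1$-factors are exactly the intertwiner relations for the restrictions $S_{p_1|_B}$ and analogously for $G_2$. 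Since each of these block intertwiners holds by $p_1 \in \CC_1$ and $q \in \CC_2$, the full relation follows.

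Third, for the reverse inclusion I would use a dimension count modeled on the one in Theorem~\ref{ThmProduct}. By Wang's result, the Haar state on $G_1 \tilde{*} G_2$ is the restriction of the free product state $h_{G_1} * h_{G_2}$ on $C(G_1) * C(G_2)$. The dimension $\dim \Hom_{G_1 \tilde{*} G_2}(w,s)$ is a mixed moment in the alternating characters $\chi_{u_1},\chi_{\bar u_1},\chi_{u_2},\chi_{\bar u_2}$ evaluated on the free product state, and Voiculescu's moment-cumulant expansion rewrites this as a sum over noncrossing partitions whose blocks alternate labels between $G_1$-moments and $G_2$-moments. Under the colored identification $P^{(2)} \cong P$ recalled in the outlook section, such a labeled noncrossing partition is exactly the data $(r, p_1, q)$ defining an element of $\CC_1 * \CC_2(w,s)$. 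Multiplicativity of the expansion then yields the equality of dimensions, and Tannaka-Krein closes the argument.

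The main obstacle will be the third step. The intertwiner direction is essentially combinatorial bookkeeping, but the dimension identification requires a careful bijection between (i) the noncrossing partitions with alternating block labels arising in Voiculescu's free moment formula and (ii) triples $(r, p_1, q)$ defining elements of $\CC_1 * \CC_2$. The noncrossing-plus-block-restriction condition in the definition of $\CC_1 * \CC_2$ is precisely engineered for this correspondence, but tracking the identification cleanly through the reversed ordering on black points (so that $\overline{u \otimes v} = \bar v \otimes \bar u$ is compatible with the noncrossing combinatorics) is the delicate verification that needs to be spelled out in full, as is the linear independence of the resulting family $\{S_p\}$.
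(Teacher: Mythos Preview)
Your overall strategy is genuinely different from the paper's, and the difference matters. The paper does \emph{not} use a dimension count for the reverse inclusion. Instead, it invokes a structural result (Lemaux, Proposition~2.15) describing the intertwiners of the free product $G_1*G_2$ directly: they are linear combinations of compositions $\id\otimes R\otimes\id$ with $R$ an intertwiner of one factor. The paper then introduces ``admissible'' partitions of the flattened $2k+2l$ points (those admitting a noncrossing coarser partition separating the $G_1$-slots from the $G_2$-slots, with restrictions lying in $\CC_1$ or $\CC_2$), shows inductively (by peeling off an interval block of the witness $r$, after rotation) that admissible partitions exactly span the intertwiners of $G_1*G_2$, and finally identifies admissible partitions with $\CC_1*\CC_2$ through the colored isomorphism $P^{(2)}(k,l)\cong P(2k,2l)$ with reversed order on black points. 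No Haar-state computation and no linear-independence assumption on the $S_p$ are used.

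Your step~3 has a real gap. The Voiculescu moment--cumulant expansion of the free product state expresses the mixed moment as a sum over noncrossing partitions with monochromatic blocks weighted by \emph{free cumulants} of $\chi_{u_i}$, not by $G_i$-\emph{moments}; so the ``block weight'' is $\kappa_{|V|}(\chi_{u_i})$, not $\dim\Hom_{G_i}(0,|V|)$. There is no direct bijection between such weighted noncrossing partitions and elements of $\CC_1*\CC_2$, because an element of $\CC_1*\CC_2$ is the partition $\binom{q}{p}$ itself, not the triple $(r,p,q)$: distinct witnesses $r$ can certify the same element, so summing $\prod_V |\CC_i(V)|$ over all noncrossing monochromatic $r$ overcounts. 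Moreover, even if you could compute $\dim\Hom_{G_1\tilde{*}G_2}$, you still need $\dim\lspan\{S_p:p\in\CC_1*\CC_2\}$ on the other side; unlike the tensor case of Theorem~\ref{ThmProduct}, this span does not factor as a tensor product of the two level-wise spans, so you cannot reduce its dimension to $\dim\Hom_{G_1}\cdot\dim\Hom_{G_2}$, and invoking linear independence of the $S_p$ would restrict you to large $n$. The paper's direct approach sidesteps all of this by never comparing dimensions, only spans.
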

\begin{proof}The intertwiners between tensor products of the representations $u_1,\bar{u}_1,u_2$ and $\bar{u}_2$ of $G_1* G_2$ are explicitely given by \cite[Proposition 2.15]{L}: they are linear combinations of compositions of morphisms of the type $\id\otimes R \otimes \id$ where $R$ is either an intertwiner between tensor products of the representations $u_1,\bar{u}_1$ of $G_1$ or an intertwiner between tensor products of the representations $u_2,\bar{u}_2$ of $G_2$.

Let us describe this set of intertwiners in a different way. Two tensor products of the representations $u_1,\bar{u}_1,u_2$ and $\bar{u}_2$ of length $k$ and $l$ can be seen as a decoration of $k$ upper points and $l$ lower points by $u_1,\bar{u}_1,u_2$ and $\bar{u}_2$. Given such a decoration, we can consider a partition $p$ of $k+l$ points which does not connect the points decorated by $u_1,\bar{u}_1$ with the points decorated by $u_2,\bar{u}_2$ such that:
\begin{itemize}
\item there exists a noncrossing partition $r$ which is coarser than $p$ and which does not connect the points decorated by $u_1,\bar{u}_1$ with the points decorated by $u_2,\bar{u}_2$,
\item the restriction of $p$ to each block of $r$ decorated by $u_1,\bar{u}_1$ is in $\CC_1$, and the restriction of $p$ to each block of $r$ decorated by $u_2,\bar{u}_2$ is in $\CC_2$.
\end{itemize}
Let us call such a partition an admissible partition. They form a category of colored partitions. We claim that the intertwiners between tensor products of the representations $u_1,\bar{u}_1,u_2$ and $\bar{u}_2$ of $G_1* G_2$ are exactly given by the linear combinations of the morphisms $T_p$ for $p$ an admissible partition in the sense above.

Let us briefly sketch the proof. On one hand, if $p$ is in $\CC_1$ or $\CC_2$, the morphism $\id\otimes T_p \otimes \id$ can be written as $T_{\idpart^{\otimes a}\otimes p\otimes\idpart^{\otimes b}}$ with the admissible partition $\idpart^{\otimes a}\otimes p\otimes\idpart^{\otimes b}$. Taking the closure by linear combination and composition gives us that every intertwiner between tensor products of the representations $u_1,\bar{u}_1,u_2$ and $\bar{u}_2$ is given by the linear combinations of the morphisms $T_p$ for $p$ an admissible partition. Conversely, if $p$ is an admissible partition, there exists a noncrossing partition $r$ which is as described above. There exists at least one interval block in $r$. Doing some rotation if necessary, we can assume that this block is supported on consecutive points on the upper left corner. Because $p$ is admissible, the restriction of $p$ to this block of $r$ is a partition $p_1$ of $\CC_1$ or $\CC_2$, and we can decompose $p=p_1\otimes p_2$ with $p_2$ admissible. But $T_{p_1}$ is an intertwiner between tensor products of the representations $u_1,\bar{u}_1,u_2$ and $\bar{u}_2$ of $G_1\tilde{*} G_2$. We conclude by induction on the number of blocks of the admissible partitions.

Thus we can describe the set of intertwiners between tensor products of the representations $u_1\otimes u_2$ and $\overline{u_1\otimes u_2}=\bar{u}_2\otimes \bar{u}_1$ of $G_1* G_2$, or equivalently between tensor products of the fundamental representation of $G_1\tilde{*} G_2$ and its adjoint, as linear combination of $T_p$ with $p$ an admissible partition. Using the isomorphism described at the beginning of the section, we see that it coincides exactly with
$$\lspan\{S_p\;|\;p\in \CC_1* \CC_2\}$$
as wanted.
\end{proof}

We observe that while $S_n^+\not\subset O_{n^2}^+$, we have $S_n^+\subset U_{n^2}^+$ in the sense of Example \ref{ExCMQG}(c) and Theorem \ref{PropMin}. 
It would be interesting to classify all spatial partition quantum groups $S_n^+\subset G\subset U_{n^2}^+$ since these are the ones relevant for free probability in the sense of \cite{VSW, SpK, BCS}.

\section*{Acknowledgements}

The first author was funded by the ERC Advanced Grant NCDFP held by Roland Speicher; the second author was partially funded by the same grant.

\bibliographystyle{alpha}
\nocite{*}
\bibliography{BibSpatialQG}

\end{document}